\documentclass{amsart}
\usepackage[english]{babel}
\usepackage{mathrsfs}
\usepackage{amsmath}
\usepackage{amsthm}
\usepackage{amssymb}
\usepackage{indentfirst}
\usepackage[all]{xy}
\usepackage{tikz}
\usepackage{tikz-cd}
\usetikzlibrary{shapes.geometric, calc,matrix,arrows,decorations.pathmorphing,arrows.meta}
\usepackage{graphicx}
\usepackage{graphics}
\usepackage{caption}
\usepackage{extarrows}
\usepackage[enableskew, vcentermath]{youngtab}
\usepackage[utf8x]{inputenc}

\setcounter{tocdepth}{1}

\theoremstyle{plain} 
\newtheorem{prop}{Proposition}[section]
\newtheorem{thm}[prop]{Theorem}
\newtheorem{lem}[prop]{Lemma} 
\newtheorem{cor}[prop]{Corollary}

\newtheorem*{conj*}{Conjecture}
\newtheorem*{thm*}{Theorem}

\theoremstyle{remark}

\newtheorem{oss}[prop]{Remark}
\newtheorem{ex}[prop]{Example}
\newtheorem{cla}[prop]{Claim}

\newtheorem*{ques*}{Question}

\theoremstyle{definition}
\newtheorem{defn}[prop]{Definition}

\newcommand{\Or}{\operatorname{O}}

\newcommand{\CC}{\mathbb{C}}
\newcommand{\PP}{\mathbb{P}}
\newcommand{\QQ}{\mathbb{Q}}
\newcommand{\RR}{\mathbb{R}}
\newcommand{\ZZ}{\mathbb{Z}}

\newcommand{\cO}{\mathcal{O}}
\newcommand{\cJ}{\mathcal{J}}

\newcommand{\oH}{\operatorname{H}}
\newcommand{\Mon}{\operatorname{Mon}}
\newcommand{\Pic}{\operatorname{Pic}}
\newcommand{\Hilb}{\operatorname{Hilb}}
\newcommand{\IJac}{\operatorname{IJ}}

\renewcommand{\div}{\mathrm{div}}

\begin{document}
\title[Birational geometry of OG10]{Birational geometry of irreducible holomorphic symplectic tenfolds of O'Grady type}
\author{Giovanni Mongardi}
\address{Dipartimento di Matematica, Universit\'{a} degli studi di Bologna,  Piazza di porta san Donato 5, Bologna, Italia 40126  }
\email{giovanni.mongardi2@unibo.it}
\author{Claudio Onorati}
\address{Department of Mathematics, University of Oslo, PO Box 1053, Blindern, 0316 Oslo, Norway}
\email{claudion@math.uio.no}

\begin{abstract}
In this paper, we analyse the birational geometry of O'Grady ten dimensional manifolds, giving a characterization of K\"ahler classes and lagrangian fibrations. Moreover, we study symplectic compactifications of intermediate jacobian fibrations of smooth cubic fourfolds.
\end{abstract}
\keywords{Keywords: O'Grady Tenfold; Lagrangian Fibration; Intermediate Jacobians; Ample cone \\ MSC 2010 classification: 14D05; 14E30; 14J40.}

\maketitle

\tableofcontents


\section*{Introduction}

An irreducible holomorphic symplectic manifold is a simply connected compact K\"ahler manifold with a unique nowhere degenerate symplectic holomorphic $2$-form. Up to deformation, only few examples are known in each dimension, and the question whether they are all or not is widely open.
In dimension $2$ though there are only K3 surfaces. In this paper we concentrate on one specific deformation type, the so called OG10 type, and investigate aspects of its birational geometry. 
These manifolds are deformation equivalent to the symplectic resolution of singularities of a singular moduli space of sheaves on a K3 surface constructed by O'Grady (\cite{O'Grady:OG10}).

The geometry of an irreducible holomorphic symplectic manifold $X$ is encoded in the second integral cohomology group $\oH^2(X,\ZZ)$. Recall that such a group is torsion free and has a symmetric non-degenerate bilinear form on it, called the Beauville--Bogomolov--Fujiki form and denoted by $q_X$. The pair $(\oH^2(X,\ZZ),q_X)$ is a lattice. 

We start our investigation by studying lagrangian fibrations. If $f\colon X\to B$ is a lagrangian fibration, then the divisor $f^*\cO_B(1)$ is isotropic (i.e.\ $q_X(f^*\cO_B(1)=0$) and nef. Our first result establishes a birational converse of this fact.
\begin{thm*}[Theorem~\ref{thm:lagr}]
Let $X$ be an irreducible holomorphic symplectic manifold of OG10 type and let $\mathcal{O}(D)\in Pic(X)$ be a non-trivial isotropic line bundle.
Assume that the class $[D]$ of $\mathcal{O}(D)$ belongs to the boundary of the birational K\"ahler cone of $X$. 

Then, there exists a smooth irreducible holomorphic symplectic manifold $Y$ and a bimeromorphic map 
$\psi\colon Y\dashrightarrow X$ such that $\mathcal{O}(D)$ induces  a lagrangian fibration $p\colon Y\rightarrow \mathbb{P}^5$. 
\end{thm*}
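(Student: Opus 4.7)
The plan is to pass to a smooth birational model of $X$ on which the class $[D]$ becomes nef, and then apply the OG10 version of the hyperk\"ahler SYZ conjecture to produce the Lagrangian fibration.

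After Markman's chamber decomposition, the closure of $\BK(X)$ coincides with the union $\bigcup_\psi \psi^*\overline{\mathcal{K}(Y)}$, where $\psi\colon Y\dashrightarrow X$ ranges over bimeromorphic maps from smooth ihs manifolds and $\mathcal{K}(Y)$ denotes the K\"ahler cone of $Y$; the walls separating adjacent chambers are cut out by MBM classes, which have strictly negative Beauville--Bogomolov--Fujiki square. A non-trivial isotropic class cannot lie in the interior of the positive cone, so the hypothesis $[D]\in\partial\BK(X)$ is equivalent to the existence of a bimeromorphic $\psi\colon Y\dashrightarrow X$ such that $D_Y:=\psi^*\mathcal{O}(D)$ is a non-trivial, nef, isotropic line bundle on the smooth ihs manifold $Y$ of OG10 type.

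Next, I would invoke the SYZ conjecture in the OG10 case to conclude that $D_Y$ is semiample. Let $p\colon Y\to B$ be the morphism with connected fibers defined by $|kD_Y|$ for $k\gg 0$. Since $q_Y(D_Y)=0$, the Fujiki relation forces $\dim B=5$, and Matsushita's theorem then shows that $p$ is a Lagrangian fibration whose general fibre is a Lagrangian abelian fivefold. The base $B$ is isomorphic to $\mathbb{P}^5$ by Hwang's theorem on bases of Lagrangian fibrations on projective irreducible holomorphic symplectic manifolds.

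The main obstacle is precisely the semiampleness step. For $K3^{[n]}$ and Kummer type this was carried out via deformation of the pair $(Y,D_Y)$ to a Beauville--Mukai system on which the Lagrangian fibration is exhibited directly (Markman, Bayer--Macr\`i, Matsushita). For OG10 type, an analogous deformation argument must be set up, specialising $(Y,D_Y)$ to a pair for which the Lagrangian fibration is already known, such as the Laza--Sacc\`a--Voisin compactified intermediate Jacobian fibration of a cubic fourfold, while keeping the nef isotropic class constant along the family. This step hinges on precise lattice-theoretic control of the monodromy group and of the wall divisors in OG10 type, which has to be developed separately before the reduction to the nef model can be meaningfully combined with a deformation-theoretic semiampleness argument. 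The birational reduction and the subsequent identification of $p$ as a Lagrangian fibration over $\mathbb{P}^5$ are, by contrast, essentially formal once this technical input is available.
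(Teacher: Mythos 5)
Your overall strategy---reduce to a distinguished pair and transport the Lagrangian fibration by deformation---is the same as the paper's, but the proposal stops exactly at the step that carries all the content, so as written there is a genuine gap. The missing ingredient is the statement that the moduli space of pairs $(X,\mathcal{O}(D))$ with $D$ primitive isotropic and on the boundary of the birational K\"ahler cone is \emph{connected}, so that a single example propagates to all such pairs. For OG10 this rests on two facts you do not supply: (i) the maximality of the monodromy group, $\Mon^2(X)=\Or^+(\oH^2(X,\ZZ))$ (Theorem~\ref{monodromy}); and (ii) the lattice computation (Lemma~\ref{lem:lagrorbit}) showing that a primitive isotropic vector in $U^3\oplus E_8(-1)^2\oplus A_2(-1)$ necessarily has divisibility $1$, hence forms a single $\Or^+$-orbit by Eichler's criterion. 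Point (ii) is where the OG10 case genuinely differs from $K3^{[n]}$ and Kummer type: the discriminant group is $3$-torsion and one must rule out divisibility $3$ by a congruence on squares modulo $18$. Writing ``this step hinges on precise lattice-theoretic control \dots which has to be developed separately'' defers precisely the technical core of the theorem.

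Two further remarks on the route. First, your reduction ``$[D]\in\partial\mathcal{BK}_X$ iff $D$ is nef and isotropic on some smooth birational model'' is not automatic: the boundary of a union of infinitely many chambers need not lie in the closure of any single chamber, and justifying this requires either the cone-conjecture-type results of Markman or is best avoided altogether. The paper does avoid it: instead of first producing a nef model and then proving semiampleness (the SYZ route), it invokes Matsushita's theorem on isotropic divisors together with Wieneck's constancy of the polarization type, which say that the conclusion of the theorem either holds for \emph{all} deformations of the pair with $[D]$ on the boundary of the birational K\"ahler cone or for none; connectedness plus one example then finishes the proof with no semiampleness argument on a fixed model. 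Second, the example used is not the intermediate Jacobian fibration but the desingularised moduli space of torsion sheaves $\widetilde{M}_{(0,2H,s)}$ on a genus-$2$ K3 surface with its Fitting morphism to $|2H|\cong\PP^5$ (Example~\ref{ex:torsion}); either would do, but the torsion-sheaf example also exhibits the principal polarization of the smooth fibres directly.
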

Recall that the birational K\"ahler cone of $X$ is the union of the K\"ahler cones of all the smooth birational models of $X$.

As a straightforward corollary we get a proof of the weak splitting property conjectured by Beauville (\cite{beau07}).
\begin{thm*}[Corollary~\ref{cor:weak splitting}]
Let $X$ be a projective irreducible holomorphic symplectic manifold of OG10 type and let $D$ be an isotropic divisor on it. Let $DCH(X)\subset CH_{\mathbb{Q}}(X)$ be the subalgebra generated by divisor classes. Then the restriction of the cycle class map 
$cl_{|DCH(X)}\colon DCH(X)\to \oH^*(X,\mathbb{Q})$ is injective .
\end{thm*}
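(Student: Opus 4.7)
My plan is to reduce the claim on $X$ to the analogous claim on a smooth birational model $Y$ admitting a Lagrangian fibration, for which the weak splitting property is already known. This requires three ingredients: Theorem~\ref{thm:lagr} itself, the birational invariance of the divisor subalgebra of the rational Chow ring (due to U.~Riess), and the weak splitting property for projective hyperk\"ahler manifolds admitting a Lagrangian fibration (Voisin and follow-ups).

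First I would bring some monodromy image of $[D]$ to the boundary of the birational K\"ahler cone of a smooth model of $X$. Up to replacing $D$ by $-D$, I may assume $[D]$ lies in the closure of the positive cone. The wall-and-chamber description of the movable cone of an OG10 type manifold, presumably available from the preceding sections of the paper, should show that $[D]$ is, up to a parallel transport operator, on the boundary of the birational K\"ahler cone of some smooth birational model $X'$ of $X$. Theorem~\ref{thm:lagr} applied to $X'$ then produces a further smooth model $\psi\colon Y\dashrightarrow X'$ carrying a Lagrangian fibration $p\colon Y\to \PP^5$.

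Next I would apply Riess's theorem along the chain $X\dashrightarrow X'\dashrightarrow Y$ to obtain a canonical identification $DCH(X)\simeq DCH(Y)$ compatible with the cycle class maps to $\oH^*(X,\QQ)\simeq \oH^*(Y,\QQ)$. The injectivity on $X$ then reduces to the injectivity on $Y$, which is the weak splitting property for a projective hyperk\"ahler admitting a Lagrangian fibration and follows by quoting the Lagrangian-fibration case of Beauville's conjecture.

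The hard part, I expect, will be the first step: showing that an arbitrary isotropic class in the Picard group of an OG10 type manifold can be moved, via a parallel transport operator, onto the boundary of a birational K\"ahler cone of a smooth birational model. This is a structural statement about the walls and the monodromy group specific to OG10 type and should be the content of the earlier sections of the paper; the two external theorems (Riess and the Lagrangian weak splitting) would be used as black boxes.
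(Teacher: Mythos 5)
Your proposal follows essentially the same route as the paper: produce an isotropic class on the boundary of the birational K\"ahler cone of a birational model, invoke Theorem~\ref{thm:lagr} to obtain a model carrying a Lagrangian fibration, and conclude via Riess's theorem, which the paper quotes in the already-combined form (\cite[Theorem 4.2]{rie}: weak splitting holds for any manifold one of whose birational models admits a Lagrangian fibration), so your two black boxes collapse into one citation. The step you single out as the hard part is not OG10-specific and does not require the wall classification: the paper simply cites \cite[Section 6]{mark_tor} for the general fact that a manifold with a square-zero divisor has a square-zero divisor on the boundary of its birational K\"ahler cone.
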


The next result describes, in a lattice-theoretic way, the ample and movable cones. In the following $\mathcal{C}(X)\subset\oH^{1,1}(X,\RR)$ is the connected component of the cone of positive classes that contains the K\"ahler cone.
\begin{thm*}[Theorem~\ref{thm:movable}, Theorem~\ref{thm:ample}]
Let $X$ be an irreducible holomorphic symplectic manifold of OG10 type. The movable cone of $X$ is an open set inside one of the connected components of $$\mathcal{C}(X)\setminus \bigcup_{D^2=-2 \text{ or } (D^2=-6 \text{ and } div(D)=3)} D^\perp.$$
The K\"ahler cone of $X$ is one of the connected components of $$\mathcal{C}(X)\setminus \bigcup_{(0>D^2\geq -4) \text{ or } (\div(D)=3 \text{ and } 0>D^2\geq -24)} D^\perp.$$
\end{thm*}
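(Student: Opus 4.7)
The strategy is to combine the general frameworks of Markman on the movable cone and of Amerik--Verbitsky together with Markman on the K\"ahler cone, with a lattice-theoretic classification of prime exceptional divisors and of wall divisors (equivalently, MBM classes) specific to manifolds of OG10 type.

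For the movable cone, I would invoke Markman's theorem stating that the closure of the movable cone is a fundamental chamber in $\overline{\mathcal{C}(X)}$ for the action of the subgroup $W_{\mathrm{Exc}}\subset\Mon^2(X)$ generated by reflections in primitive prime exceptional classes. The problem then reduces to the numerical classification of such classes. Two families are expected: divisorial contractions whose exceptional class satisfies $D^2=-2$, and the O'Grady divisorial contraction, whose exceptional class has $D^2=-6$ and $\div(D)=3$. Existence on any $X$ of OG10 type carrying such a class in $\Pic(X)\cap\overline{\mathcal{C}(X)}$ follows by deformation and parallel transport to a special model on which the contraction is explicit, using the deformation invariance of prime exceptionality.

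For the K\"ahler cone I would apply Markman's description of $\Amp(X)$ as a connected component of $\mathcal{C}(X)\setminus\bigcup_{\alpha}\alpha^\perp$, with $\alpha$ ranging over MBM classes. In addition to the two families above (which give walls of the movable cone as well, corresponding to divisorial contractions, and re-appear as flopping walls whenever the contraction becomes small on a birational model), the list must include purely flopping walls coming from small contractions, whose numerical types are $D^2=-4$ and $(D^2=-24,\ \div(D)=3)$. Existence is established by exhibiting explicit small contractions on suitable models, such as moduli of sheaves on a K3 whose symplectic desingularization is birational to an OG10 manifold, and propagating by monodromy.

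The main obstacle is \emph{exhaustiveness}: showing that no primitive $D$ with $D^2<0$ outside the listed numerical types can be an MBM class. For each such excluded numerical type one has to produce a deformation $(X',D')$ of $(X,D)$ within its monodromy/Hodge orbit for which $(D')^\perp$ meets the K\"ahler cone of $X'$, so that $D$ cannot cut out a wall. The argument rests on a detailed study of $\Mon^2$-orbits on primitive vectors of fixed square and divisibility in the OG10 lattice $U^{\oplus 3}\oplus E_8(-1)^{\oplus 2}\oplus A_2(-1)$, whose discriminant form is $\ZZ/3$ (forcing $\div(D)\in\{1,3\}$ and constraining $D^2\pmod{18}$), combined with Verbitsky's Torelli theorem, which translates the lattice-theoretic classification into the required geometric statement about walls of the K\"ahler and movable cones.
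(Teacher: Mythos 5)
Your overall architecture is the same as the paper's: Markman's fundamental exceptional chamber plus a classification of stably prime exceptional classes for the movable cone, and the wall-divisor (MBM) chamber decomposition plus a classification of wall divisors for the K\"ahler cone, with existence of each numerical type supplied by explicit models (the exceptional divisor $\widetilde{\Sigma}$ and the non-locally-free locus $\widetilde{B}$ for squares $-6$ and $-2$; a Lagrangian $\PP^5$ and a $\PP^3$-bundle for squares $-4$ and $-24$) and propagated by the single-orbit Eichler computation together with the maximality of $\Mon^2$. The movable-cone half of your argument is complete as written and matches the paper's Proposition~\ref{prop:pex} (integrality of the Markman reflection forces $2\div(D)/D^2\in\ZZ$, and the $\ZZ/3$ discriminant forces $\div(D)\in\{1,3\}$).

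The genuine gap is in the exhaustiveness step for wall divisors. You correctly identify it as the main obstacle, but your proposed mechanism --- ``for each excluded numerical type produce a deformation $(X',D')$ for which $(D')^\perp$ meets the K\"ahler cone'' --- is not an argument: there are infinitely many excluded types $(D^2,\div(D))$, and exhibiting a K\"ahler class orthogonal to $D'$ presupposes control of the K\"ahler cone of $X'$, which is exactly what is being determined. A uniform mechanism is needed, and the lattice-theoretic orbit analysis plus Verbitsky's Torelli theorem only reduces each type to a single representative; it does not tell you whether that representative cuts a wall. The paper's mechanism is: deform so that $X\cong\widetilde{M}_v(S,H)$ with $\langle D',\widetilde{\Sigma}\rangle$ negative definite, run the MMP for a suitable klt pair using Lehn--Pacienza to contract $\widetilde{\Sigma}$ as the first step, and thereby force any extremal contraction associated to $D'$ to descend to a wall of the \emph{singular} moduli space $M_v(S,H)$, where Meachan--Zhang's classification (classes $s$ with $s^2=-2$, $(s,v)=0$ or $0<(s,v)\le 4$) gives the finite list of possible projections of $D'$ into $\widetilde{\Sigma}^\perp$ (squares $-2$, $-4$, $-10$ with prescribed divisibilities). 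Even then one residual case survives the projection argument ($D^2=-42$, $\div(D)=3$, projecting to a $-4$ class) and must be killed by a separate ad hoc example. Without this reduction --- or an equivalent independent handle on extremal rays --- your plan does not close; note also that some walls upstairs (the $-4$ wall of the Lagrangian $\PP^5$) are invisible on the singular space, so the transfer between $\widetilde{M}_v$ and $M_v$ genuinely requires the MMP bookkeeping rather than a naive projection.
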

The most important tool to achieve this result is the classification of prime exceptional divisors (see Proposition~\ref{prop:pex}) and wall divisors (see Proposition~\ref{prop:wall}). Recall that prime exceptional divisors are irreducible and reduced divisors with negative square (\cite{mar_prime}): it is known (see \cite{mark_tor}) that the movable cone is contained in a prime exceptional chamber, that is in a chamber determined by prime exceptional divisors. Wall divisors are divisors with negative square and such that their orthogonal complements do not contain K\"ahler classes of any smooth birational model (\cite{Mongardi:Cones}): the ample cone is a chamber in the wall-and-chamber decomposition they produce (\cite{Mongardi:Cones}). 

This classification is obtained in two ways. First we construct in an explicit and geometric way examples of wall divisors, studying the birational transformations associated. Then we prove that they are the only possibilities, by using recent results about the minimal model program. This last step is the most technical part of the argument. We use a result of Ch.\ Lehn and Pacienza on the minimal model program for irreducible holomorphic symplectic manifolds (\cite{lp}) to reduce the problem to a singular moduli space of sheaves on a K3 surface. Here we can apply the result of Meachan and Zhang (\cite{Meach_Zhang}) to show that the divisors we found are all.

Finally, as an application of these results, we study the symplectic compactifications of the intermediate jacobian fibrations associated to a smooth cubic fourfold, as constructed by \cite{Sacca:boh}. In particular, we focus on the uniqueness of such compactifications. In the following $V$ is a smooth cubic fourfold and $U_1\subset\PP\oH^0(V,\cO_V(1))^*$ is the open subset containing linear sections with at worst one node. We denote by $\cJ_{U_1}(V)$ the associated intermediate jacobian fibration (see, for example, \cite{LSV}).
\begin{thm*}[Theorem~\ref{thm:una sola LSV}]
Let $V$ be a smooth cubic fourfold outside the Hassett divisors $\mathcal{C}_8$ and $\mathcal{C}_{12}$. Then there is a unique irreducible holomorphic symplectic compactification of $\mathcal{J}_{U_1}(V)$.
\end{thm*}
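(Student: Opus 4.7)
The plan is to combine Theorem~\ref{thm:ample} with a lattice-theoretic study of the Picard group of the Laza--Sacc\`a--Voisin compactification $\cJ(V)$, in order to show that the hypothesis $V \notin \mathcal{C}_8 \cup \mathcal{C}_{12}$ rules out every wall divisor in $\Pic(\cJ(V))$. Once this is established, the wall-and-chamber decomposition of the birational K\"ahler cone consists of a single chamber, forcing the IHS compactification to be unique.

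First I would reduce uniqueness to a statement about smooth birational models. If $Y$ is another IHS compactification of $\cJ_{U_1}(V)$, both $Y$ and $\cJ(V)$ contain $\cJ_{U_1}(V)$ as a dense Zariski open and agree on it, so the identity extends to a bimeromorphic map $\phi \colon \cJ(V) \dashrightarrow Y$. Since bimeromorphic IHS manifolds are deformation equivalent, $Y$ is of OG10 type, and $\phi$ induces a Hodge isometry on $\oH^2$ that restricts to an isomorphism of Picard lattices and of birational K\"ahler cones. The theorem reduces to showing that $\cJ(V)$ and $Y$ lie in the same K\"ahler chamber of this common birational K\"ahler cone.

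Next I would analyse $\Pic(\cJ(V))$. It always contains the isotropic lagrangian class $L = f^*\cO_{\PP^5}(1)$ together with a natural polarization $\Theta$ coming from the LSV construction, and extra algebraic classes appear precisely when $V$ lies in some Hassett divisor $\mathcal{C}_d$. The LSV construction provides an explicit Hodge-theoretic embedding of $\oH^4(V,\ZZ)_{\mathrm{prim}}$ into $\oH^2(\cJ(V),\ZZ)$, through which a primitive Hodge class on $V$ of discriminant $d$ corresponds to a divisor class in $\Pic(\cJ(V))$ of computable square and divisibility. I would then match the wall divisor conditions of Theorem~\ref{thm:ample}, namely $q(D) \in \{-2, -4\}$ or $\div(D) = 3$ with $0 > q(D) \geq -24$, with Hodge classes on $V$, showing that every such wall divisor forces $V$ to lie in $\mathcal{C}_8$ or $\mathcal{C}_{12}$. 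Under the hypothesis of the theorem no such wall divisor exists, so by Theorem~\ref{thm:ample} the entire component of the positive cone is a single K\"ahler chamber containing both $\cJ(V)$ and $Y$, and $\phi$ is an isomorphism.

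The main obstacle will be the arithmetic matching in the last step. The cases $q(D) \in \{-2, -4\}$ correspond to rank-two sublattices of discriminants $8$ and $12$ respectively and are thus directly related to $\mathcal{C}_8$ and $\mathcal{C}_{12}$, but the divisibility-three walls of squares $-6, -8, \ldots, -24$ require a careful computation of the discriminant group of $\oH^2(\cJ(V),\ZZ)$ in order to verify that no such embedding into $\Pic(\cJ(V))$ can arise from Hodge classes on $V$ outside $\mathcal{C}_8 \cup \mathcal{C}_{12}$.
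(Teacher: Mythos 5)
Your overall architecture---reduce uniqueness to a wall-crossing statement, transport the walls through the Hodge-theoretic embedding $\oH^4(V,\ZZ)_{\operatorname{prim}}\hookrightarrow\oH^2(\IJac(V),\ZZ)$, and match the numerical wall conditions against Hassett divisors---is the same as the paper's, and the final matching (done with the correct arithmetic) does land on $\mathcal{C}_2,\mathcal{C}_6,\mathcal{C}_8,\mathcal{C}_{12}$, of which the first two parametrise singular cubics. But your pivotal intermediate claim, that for $V\notin\mathcal{C}_8\cup\mathcal{C}_{12}$ there are \emph{no} wall divisors in $\Pic(\IJac(V))$, so that the whole positive cone is a single K\"ahler chamber, is false. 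Already for $V$ very general one has $\Pic(\IJac(V))=\langle T_V,b_V\rangle$, and this lattice contains the wall divisors $T_V$ (square $-2$, prime exceptional) and $T_V-b_V$ (square $-4$, the wall of the Mukai flop at the rational zero section $Z_V\cong\PP^5$, cf.\ Figure~\ref{pic:B}). So $\IJac(V)$ always admits a nontrivial birational model and your argument would ``prove'' that it does not; the point is rather that the flopped model is not a compactification of $\cJ_{U_1}(V)$, because the flopping centre dominates $\PP^5$.

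The missing idea is to use what the two compactifications share in order to discard every wall except those lying in $\langle T,b\rangle^\perp$. Since both contain $\cJ_{U_1}(V)$ as a common open subset, the induced birational map is an isomorphism over $U_1$ and its indeterminacy locus lies over $\PP^5\setminus U_1$; hence $b$ is nef on both models and, the relative theta divisor $T$ being relatively ample over $U_1$, the big class $T+cb$ (for $c\gg0$) is nef on both as well. A wall separating the two K\"ahler chambers must therefore contain all the classes $T+cb$, i.e.\ be orthogonal to both $T$ and $b$, and so it is a wall divisor inside $\langle T,b\rangle^\perp\cong\oH^4(V,\ZZ)_{\operatorname{prim}}$ (Lemma~\ref{lemma:prim cohom general}). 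Only at this point does your lattice matching enter, and you should run it with the exact classification of Proposition~\ref{prop:wall} ($D^2=-2,-4$ with $\div(D)=1$, or $D^2=-6,-24$ with $\div(D)=3$) rather than the full range of hyperplanes listed in Theorem~\ref{thm:ample}. Since the anti-similitude reverses the sign of the form, these four cases correspond to primitive Hodge classes on $V$ of square $2$, $4$, $6$, $24$, hence to the Hassett divisors $\mathcal{C}_6$, $\mathcal{C}_{12}$, $\mathcal{C}_2$, $\mathcal{C}_8$ respectively (in particular not $-2\leftrightarrow\mathcal{C}_8$ as you guessed); discarding the two that parametrise singular cubics leaves exactly $\mathcal{C}_8$ and $\mathcal{C}_{12}$.
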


\subsection*{Structure of the paper}
In Section~\ref{sec:preliminaries} we recall the main definitions and main results that we will need later in the paper. 
Section~\ref{sec:lagr fibr} deals with lagrangian fibrations and here we prove Theorem~\ref{thm:lagr} and Corollary~\ref{cor:weak splitting}.
In Section~\ref{sec:movable} we classify prime exceptional divisors and prove Theorem~\ref{thm:movable} about the movable cone. 
Section~\ref{sec:examples} containes example of wall divisors and of birational morphisms between moduli spaces of sheaves, where these divisors arise.
Section~\ref{sec:ample} completes the classification of wall divisors and proves Theorem~\ref{thm:ample}. 
In Section~\ref{sec:counter-counter} we give a geometric and conceptual explanation of what was wrong in \cite[Theorem~3.3]{monwrong}: in that paper it was erroneously proved that the monodromy group of manifolds of OG10 type is not maximal.
Finally, Section~\ref{sec:IJac} contains applications of the previous results to the case of compactified intermediate jacobian fibration of smooth cubic fourfolds.

\subsection*{Acknowledgements}
We would like to thank Valeria Bertini, Antonio Rapagnetta, Ulrike Rie\ss$\,$ and Giulia Saccà for useful conversations. Both authors were partially supported by ``Progetto di ricerca INdAM per giovani ricercatori: Pursuit of IHS''. GM is member of the INDAM-GNSAGA and received support from it. CO thanks the Research Council of Norway project no. 250104 for financial support.


\section{Preliminaries}\label{sec:preliminaries}
Let $S$ be a projective K3 surface and $v\in\oH^{\operatorname{even}}(S,\ZZ)$ an effective and positive Mukai vector (see \cite[Definition~0.1]{Yoshioka:ModuliSpaces}). If $H\in\Pic(S)$ is an ample class, we denote by $M_v(S,H)$ the moduli space of Gieseker $H$-semistable sheaves $F$ on $S$ such that $ch(F)\sqrt{td_S}=v$.

\begin{thm}[\protect{\cite{LehnSorger}, \cite{rapagnetta:og10}, \cite{PeregoRapagnetta:OLS}, \cite{PeregoRapagnetta:BBFlattice}}]\label{thm:og10prelim}
Suppose that $H$ is $v$-generic (see \cite[Section~2.1]{PeregoRapagnetta:OLS}) and that $v=2w$ with $w^2=2$. Then the following hold.
\begin{enumerate}
\item The moduli space $M_v(S,H)$ has either locally factorial or $2$-factorial singularities and there exists a symplectic desingularisation 
$$\pi_v\colon\widetilde{M}_v(S,H)\longrightarrow M_v(S,H).$$
Moreover, if $\Sigma_v$ denotes the singular locus of $M_v(S,H)$, then $\widetilde{M}_v(S,H)$ is obtained by blowing up $\Sigma_v$ with its reduced scheme structure.\\
\item $\widetilde{M}_v(S,H)$ is an irreducible holomorphic symplectic manifold.\\
\item There is an isometry
$$\oH^2(\widetilde{M}_v(S,H),\ZZ)\cong U^{3}\oplus E_8(-1)^{2}\oplus A_2(-1),$$
where $U$ is the hyperbolic plane and $E_8$ and $A_2$ are the lattices associated to the corresponding Dynkin diagrams. Moreover,
$$v^\perp\cong\oH^2(M_v(S,H),\ZZ)\longrightarrow\oH^2(\widetilde{M}_v(S,H),\ZZ)$$
is injective, the second map being the pullback $\pi_v^*$. Finally, if $\alpha\in v^\perp$ has divisibility $2$, we have 
$$\frac{\alpha\pm\widetilde{\Sigma}}{2}\in \oH^2(\widetilde{M}_v(S,H),\ZZ)$$
\end{enumerate}
\end{thm}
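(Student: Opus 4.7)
The plan is to assemble the statement from four pieces in the literature as indicated by the citations: the local structure theorem of Lehn--Sorger, the construction and properties of the prototype O'Grady example due to O'Grady and Rapagnetta, the deformation-theoretic extension of Perego--Rapagnetta (\cite{PeregoRapagnetta:OLS}), and their explicit lattice computation (\cite{PeregoRapagnetta:BBFlattice}).

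For parts (1) and (2), the strategy is to reduce to the prototype case $v=(2,0,-2)$ on a K3 surface, where the construction is due to O'Grady. In the general case $v=2w$ with $w^{2}=2$ and $H$ being $v$-generic, the stratification of $M_v(S,H)$ by polystable type has a deepest stratum corresponding to polystable sheaves $F_1\oplus F_2$ with $F_i$ of Mukai vector $w$ and $F_1\not\cong F_2$; using Kuranishi theory and the Luna slice theorem, one shows that the local model at such a point is a symplectic variety with an $A_1$-type transverse singularity along $\Sigma_v$, to which the Lehn--Sorger local model applies, producing a symplectic resolution after blowing up $\Sigma_v$ with its reduced structure. The dichotomy between locally factorial and $2$-factorial singularities is governed by the behaviour of divisors along the deeper strata. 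Rapagnetta's analysis in the prototype then propagates to arbitrary $v$ in our range since Perego--Rapagnetta show that any two pairs satisfying our hypothesis give deformation equivalent resolutions; simple connectedness, the K\"ahler property, and uniqueness of the holomorphic symplectic form are deformation invariants, so they follow once verified in the prototype case.

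For part (3), the key is to compute the isometry class of $\oH^{2}$ of the resolution. Via a (quasi-)universal family on $S\times M_v^{\mathrm{s}}$, the Mukai homomorphism $\theta_v\colon v^{\perp}\to\oH^{2}(M_v^{\mathrm{s}},\ZZ)$ is a Hodge isometry onto its image and extends across the singular locus since the strictly semistable locus has codimension at least two. Composing with $\pi_v^{*}$ gives the asserted injection into $\oH^{2}(\widetilde{M}_v,\ZZ)$. The exceptional divisor $\widetilde{\Sigma}$ of $\pi_v$ contributes an additional integral class, and the overlattice relations $(\alpha\pm\widetilde{\Sigma})/2\in\oH^{2}(\widetilde{M}_v,\ZZ)$ for divisibility-two classes $\alpha\in v^{\perp}$ arise from the analysis of the double cover structure of $\widetilde{\Sigma}\to\Sigma_v$, which is generically a $\PP^{1}$-bundle over a suitable $M_w(S,H)$ branched along a locus encoding the symmetric/antisymmetric decomposition.

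The main technical obstacle, carried out in \cite{PeregoRapagnetta:BBFlattice}, is to verify that these relations generate a sufficiently large overlattice to produce the $A_2(-1)$ summand rather than a proper sublattice of it; once this is established, the abstract isometry $\oH^{2}(\widetilde{M}_v,\ZZ)\cong U^{3}\oplus E_8(-1)^{2}\oplus A_2(-1)$ is pinned down by rank, signature and discriminant form, via the standard classification of even lattices of signature $(3,20)$ of small discriminant.
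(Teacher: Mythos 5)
This theorem is stated in the paper without proof; it is imported wholesale from the cited references (Lehn--Sorger for the local model, O'Grady and Rapagnetta for the prototype $v=(2,0,-2)$, and Perego--Rapagnetta for the general $v=2w$ and the lattice computation), so your proposal can only be judged against those sources. As an assembly of that literature it follows the intended route, but one step as written would fail. You have the stratification of the singular locus backwards: the locus of polystable sheaves $F_1\oplus F_2$ with $F_1\not\cong F_2$ is the \emph{generic} stratum of $\Sigma_v$, along which the singularity is an ordinary transverse $A_1$ and requires no new input; the deepest stratum is $\Omega_v\cong M_w(S,H)$, the locus of sheaves of the form $F\oplus F$, and it is precisely there that the Lehn--Sorger analysis of the Kuranishi/Luna local model (the cone over the incidence variety in $\PP^3\times(\PP^3)^\vee$, resolved by blowing up its reduced singular locus) carries the entire content of part (1). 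A proof that applies the local model only at points with $F_1\not\cong F_2$ never touches the actual difficulty, namely that the blow-up of $\Sigma_v$ with its \emph{reduced} structure remains symplectic over $\Omega_v$.

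Two smaller inaccuracies in part (3): the lattice $U^{3}\oplus E_8(-1)^{2}\oplus A_2(-1)$ has rank $24$ and signature $(3,21)$, not $(3,20)$, so the final identification must be run in that genus (rank, signature $(3,21)$, discriminant group $\ZZ/3$); and $\widetilde{\Sigma}\to\Sigma_v$ is generically a $\PP^1$-bundle over $\Sigma_v\setminus\Omega_v$, where $\Sigma_v$ is the \emph{symmetric square} of $M_w(S,H)$, not $M_w(S,H)$ itself. The integrality of $(\alpha\pm\widetilde{\Sigma})/2$ for $\alpha$ of divisibility $2$ is most directly seen from the $2$-factoriality analysis of Perego and Perego--Rapagnetta: a Weil non-Cartier divisor $B$ with $2B$ Cartier and $\pi_v^*(2B)=2\widetilde{B}+\widetilde{\Sigma}$, as in the paper's Example on $M_S$; together with $\widetilde{\Sigma}$ this is exactly what produces the $A_2(-1)$ summand, so your closing remark about the overlattice is correct in spirit once that mechanism is made precise.
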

Any irreducible holomorphic symplectic manifold deformation equivalent to a smooth moduli space as in the Theorem is called of OG10 type. We recall in the following example the notation for the moduli spaces used by O'Grady in his original construction.
\begin{ex}[\cite{O'Grady:OG10}]\label{ex:M_S}
Assume $v=(2,0,-2)$. In this case an ample class $H$ is generic if the only divisor $D$ such that $D^2\geq-4$ and $(D,H)=0$ is the trivial divisor (cf.\ \cite{O'Grady:OG10}). Fixed such a generic ample class, we denote by $M_S$ the moduli space $M_v(S,H)$. The locus $B$ of non-locally free sheaves is a Weil divisor that is not Cartier (cf.\ \cite{Perego}). If we denote by $\widetilde{B}$ its strict transform and by $\widetilde{\Sigma}$ the exceptional divisor of the desingularisation, then
$$ \oH^2(\widetilde{M}_S,\ZZ)\cong\oH^2(S,\ZZ)\oplus\langle\widetilde{B},\widetilde{\Sigma}\rangle,$$
where $\widetilde{B}^2=-2$, $\widetilde{\Sigma}^2=-6$ and $(\widetilde{B},\widetilde{\Sigma})=3$ (cf.\ \cite{rapagnetta:og10}).
Finally, $$v^\perp=\oH^2(S,\ZZ)\oplus\ZZ(1,0,1),$$ where the class $(1,0,1)$ corresponds to the Cartier divisor $2B$, and $\pi_v^*(2B)=2\widetilde{B}+\widetilde{\Sigma}$.
\end{ex}
\begin{ex}[Moduli spaces of torsion sheaves]\label{ex:torsion}
Let $S$ be a very general K3 surface of genus 2, that is $\Pic(S)=\ZZ H$ where $H$ is an ample line bundle of degree $2$. Fix the Mukai vector $v=(0,2H,s)$. Then the moduli space $M_v=M_v(S,H)$ parametrises sheaves of pure dimension 1. More precisely, $M_v$ containes the relative Picard variety $\Pic^{s+4}(\mathcal{C}/|2H|)$, where $\mathcal{C}\to U\subset|2H|$ is the universal family of smooth curves contained in the linear system $|2H|$.

There is a natural morphism $p\colon M_v\to|2H|$ that assigns to a sheaf its Fitting support; we call this map the \emph{Fitting morphism}.

Notice that $M_v$ is smooth when $s$ is odd, and it is singular when $s$ is even. The last case is the one we are interested in this paper, so from now on we suppose that $s$ is even. If $\pi\colon\widetilde{M}_v\to M_v$ is the symplectic desingularisation described in Theorem \ref{thm:og10prelim}, then the composition $p\circ\pi\colon\widetilde{M}_v\to|2H|$ is a lagrangian fibration.
\end{ex}
\begin{oss}\label{rmk:intersezione sopra e sotto}
Let $\pi_v\colon\widetilde{M}_v(S,H)\longrightarrow M_v(S,H)$ be the symplectic desinuglarisation of theorem \ref{thm:og10prelim}. We remark that the inclusion $\pi_v^*\colon\oH^2(M_v(S,H),\ZZ)\to\oH^2(\widetilde{M}_v(S,H),\ZZ)$ preserves the intersection product in the following sense. Let $C\subset M_v(S,H)$ be a curve, $\widetilde{C}\subset\widetilde{M}_v(S,H)$ its strict transform  and $D\in\Pic(M_v(S,H))$ a divisor. Then by the projection formula
$$ \widetilde{C}.\pi_v^*D=C.D.$$
\end{oss}
We recall that a monodromy operator is an isometry of the lattice $\oH^2(X,\ZZ)$ arising as parallel transport inside a deformation family of $X$. Such isometries are always contained in the index two subgroup $\Or^+(\oH^2(X,\ZZ))$ of orientation preserving isometries.
The main tool we are going to use throughout the rest of the paper is the following description of the monodromy group.
\begin{thm}[\cite{ono}]\label{monodromy}
Let $X$ be an irreducible holomorphic symplectic manifold of OG10 type. Then
$$\Mon^2(X)=\Or^+(\oH^2(X,\ZZ)).$$
\end{thm}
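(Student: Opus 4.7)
My plan is to prove the nontrivial inclusion $\Or^+(\oH^2(X,\ZZ)) \subseteq \Mon^2(X)$; the reverse inclusion is the standard fact that parallel transport operators preserve both the lattice structure and the orientation of the positive cone. Since $\Mon^2$ is a deformation invariant, I may reduce to the case $X = \widetilde{M}_v(S,H)$ of a desingularised moduli space with $v=2w$, $w^2=2$, and use the explicit description of $\oH^2(X,\ZZ)$ given in Theorem~\ref{thm:og10prelim}.

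The first step is to realise a large subgroup of $\Mon^2(X)$ from the K3 side. Any orientation-preserving isometry of $\oH^2(S,\ZZ)$ is a monodromy of $S$, and it extends to an isometry of the Mukai lattice $\widetilde H(S,\ZZ) \cong U^{4} \oplus E_8(-1)^{2}$ fixing $v$. Following Markman's strategy for moduli spaces of sheaves on K3s, such isometries of the Mukai lattice are realised by parallel transport operators of $\widetilde{M}_v(S,H)$: concretely, by deforming $(S,H,v)$ in a family of polarised K3s and tracking the isomorphism $v^\perp \cong \oH^2(M_v(S,H),\ZZ)$ together with the description of the exceptional divisor $\widetilde\Sigma$, one produces monodromy operators that stabilise $\widetilde\Sigma$ and realise the lift of $\Or^+(\oH^2(S,\ZZ))$. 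Adding to these the monodromy operators coming from Fourier--Mukai equivalences $\Phi_{\mathcal E}\colon D^b(S)\to D^b(S')$ that send $v$ to another effective Mukai vector of the same square, one generates the subgroup of $\Or^+(\oH^2(X,\ZZ))$ acting trivially on the discriminant group.

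The second step is to produce a monodromy operator acting as $-1$ on the discriminant group. The lattice $U^3 \oplus E_8(-1)^2 \oplus A_2(-1)$ has discriminant group $\ZZ/3\ZZ$ coming from the $A_2(-1)$ summand, and $\Or^+(\ZZ/3\ZZ) = \{\pm 1\}$. The operators of the previous step all act trivially on this quotient, so they span at most an index $2$ subgroup. To realise the nontrivial element one exhibits a single explicit geometric operation on a specific OG10 model whose induced isometry acts by $-1$ on $\ZZ/3\ZZ$. Natural candidates are the involution induced on $\widetilde M_v(S,H)$ by the derived dual $F \mapsto F^\vee[1]$ composed with a suitable twist, or a birational self-map produced by wall-crossing across a specific wall in the stability space; one then verifies by intersecting with $\widetilde\Sigma$ and with representatives of classes in $v^\perp$ of divisibility $2$ that the induced action on $\oH^2$ has the required discriminant action.

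Finally, one combines these inputs by a lattice-theoretic generation argument: by an Eichler-type criterion for the indefinite lattice $U^3 \oplus E_8(-1)^2 \oplus A_2(-1)$, the subgroup of $\Or^+(\oH^2(X,\ZZ))$ acting trivially on the discriminant, together with any single element acting by $-1$, generates the whole group. The hardest step is the second one: unlike the $K3^{[n]}$ situation, where Markman's theorem characterises $\Mon^2$ as a proper subgroup of $\Or^+$, here one must produce an \emph{extra} monodromy that is invisible from the K3 side, and correctly identify its action on the discriminant from the geometry of the OG10 manifold. This is the genuinely new content of the theorem and the main obstacle in its proof.
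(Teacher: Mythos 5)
First, a point of comparison: the paper does not prove Theorem~\ref{monodromy} at all --- it is imported wholesale from \cite{ono} --- so your sketch can only be measured against the argument in that reference. Your overall architecture (reduce to $\widetilde{M}_v(S,H)$, build monodromy operators from the Mukai-lattice side, control the discriminant group $\ZZ/3\ZZ$ of $U^3\oplus E_8(-1)^2\oplus A_2(-1)$, finish by a generation argument) does match the shape of that proof. But there is a genuine gap in your second step, and it sits exactly where the real difficulty of the theorem lies. The operators you can produce by soft arguments --- parallel transport along deformations of $(S,H,v)$, which act trivially on $\langle\widetilde{B},\widetilde{\Sigma}\rangle$, together with Markman's reflections in the prime exceptional divisors $\widetilde{B}$ and $\widetilde{\Sigma}$ --- all preserve the orthogonal splitting $\oH^2(X,\ZZ)\cong\bigl(U^3\oplus E_8(-1)^2\bigr)\perp\langle\widetilde{B},\widetilde{\Sigma}\rangle$ up to the finite Weyl group of the $A_2(-1)$ summand. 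The group they generate is therefore contained in $\Or^+\bigl(U^3\oplus E_8(-1)^2\bigr)\times\Or\bigl(A_2(-1)\bigr)$, which is very far from the kernel of the discriminant representation: that kernel contains, for instance, Eichler transvections mixing a hyperbolic summand with $A_2(-1)$, equivalently isometries moving $\widetilde{\Sigma}$ to a $(-6)$-class of divisibility $3$ outside $\langle\widetilde{B},\widetilde{\Sigma}\rangle$. For a Fourier--Mukai equivalence one must first show that the induced birational map extends to (or at least induces a parallel transport between) the resolutions, and then compute what it does to $\widetilde{\Sigma}$; neither is automatic, and the present paper itself emphasises (hyperelliptic and O'Grady birational maps) that such maps need \emph{not} preserve the singular loci. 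Producing and computing these ``mixing'' operators --- via explicit birational maps between the $\widetilde{M}_{(0,2H,s)}$ and the degeneration of the LSV intermediate jacobian fibration, with the action on $\oH^2$ written out in coordinates --- is the actual content of \cite{ono}. Your sketch asserts the conclusion of this step (``one generates the subgroup acting trivially on the discriminant'') with no argument.

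Conversely, the step you single out as the hardest --- exhibiting a monodromy operator acting by $-1$ on the discriminant --- is essentially free. The exceptional divisor $\widetilde{\Sigma}$ is prime exceptional of square $-6$ and divisibility $3$, so by \cite{mar_prime} the reflection $R_{\widetilde{\Sigma}}$ is a monodromy operator, and it sends $[\widetilde{\Sigma}/3]$ to $-[\widetilde{\Sigma}/3]$, i.e.\ acts as $-1$ on $A_L\cong\ZZ/3\ZZ$; there is no need for derived duality or wall-crossing, nor for an Eichler-type generation argument at the end, since the kernel of the discriminant action has index $2$ in $\Or^+$. So the difficulty has been misplaced: it is not the discriminant action but the unsupported generation claim in your second step.
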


As in the case of K3 surfaces, interesting properties of divisors are preserved by monodromy transformations. There are two such classes of divisors, the first are wall divisors, which can roughly be seen as divisors whose orthogonal separates ample chambers of different birational models (see \cite{Mongardi:Cones}) or as divisors dual to extremal rays (see \cite{amver})\footnote{Both statements are true up to the action of monodromy Hodge isometries.}. The second one are stably prime exceptional divisors, which can be seen as divisors which are generally contractible in their Hodge locus or as divisors who can be deformed to an effective integral and negative divisor.
\begin{defn}
Let $X$ be an irreducible holomorphic symplectic manifold and let $D\in \Pic(X)$ be a divisor. The divisor $D$ is prime exceptional if it is effective, integral and $q(D)<0$. A divisor is stably prime exceptional if it is prime exceptional on a very general deformation of the pair $(X,D)$.
\end{defn}
\begin{defn}
Let $X$ be an irreducible holomorphic symplectic manifold and let $D\in \Pic(X)$ be a primitive divisor. The divisor $D$ is a wall divisor if $q(D)<0$ and, for all $f\in Mon^2(X)\cap Hdg^2(X)$, we have 
$$ f(D)^\perp \cap \mathcal{BK}_X=\emptyset. $$
\end{defn}
In particular, a stably prime exceptional divisor is the multiple of a wall divisor and wall divisors which have an effective multiple are stably prime exceptional divisors. By taking together the results of \cite{amver,bht,mark_tor,Mongardi:Cones} we have the following:
\begin{thm}
Let $X$ be a irreducible holomorphic symplectic manifold and let $D\in \Pic(X)$ be a divisor. Let $Y$ be an irreducible holomorphic symplectic manifold deformation equivalent to $X$ and let $D'\in \Pic(Y)$ be the parallel transport of $D$. Then, $D'$ is a wall divisor (respectively a stably prime exceptional divisor) if and only if $D$ is. 
\end{thm}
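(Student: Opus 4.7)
The plan is to treat the two assertions separately, since one follows almost tautologically from the definition while the other requires the combined machinery of the cited papers.

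For stably prime exceptional divisors, deformation invariance is essentially built into the definition. If $D$ is stably prime exceptional on $X$, then by definition a very general deformation $(X_t, D_t)$ of the pair $(X, D)$ is such that $D_t$ is integral, effective and satisfies $q(D_t)<0$. Parallel transport from $X$ to $Y$ along a path in the marked deformation space identifies the germ of deformations of $(X, D)$ with the germ of deformations of $(Y, D')$, and the notion of ``very general'' translates accordingly. Hence $(X_t, D_t)$ is equally well a very general deformation of $(Y, D')$, so $D'$ is stably prime exceptional. The argument is symmetric, giving the equivalence.

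For wall divisors the key point is that the defining condition
\[
q(D) < 0 \quad \text{and} \quad f(D)^\perp \cap \mathcal{BK}_X = \emptyset \ \text{ for all } f \in \Mon^2(X) \cap \operatorname{Hdg}^2(X)
\]
is phrased entirely in terms of the Hodge structure on $\oH^2$, the monodromy group, and the birational K\"ahler cone. Parallel transport is an isometry of second cohomology which, by construction, is a monodromy operator and preserves Hodge structures. By the global Torelli theorem \cite{mark_tor} and the description of the K\"ahler cone via MBM classes \cite{amver, bht}, the birational K\"ahler cone is cut out by orthogonals to a monodromy-invariant set of lattice classes determined by the Hodge structure. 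Consequently $\mathcal{BK}_X$ is transported onto $\mathcal{BK}_Y$, and the wall divisor condition for $D$ is equivalent to the same condition for $D'$. This synthesis is exactly what is carried out in \cite{Mongardi:Cones}.

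The expected main obstacle is the wall divisor case: more precisely, establishing that the locus $\bigcup f(D)^\perp$ (as $f$ ranges over monodromy Hodge isometries) and the birational K\"ahler cone both transfer correctly under parallel transport. Once the monodromy invariance of the set of wall classes and the description of $\mathcal{BK}$ from the MBM literature are in hand, the equivalence is immediate. The residual observation that a wall divisor with an effective multiple is stably prime exceptional, already recalled in the text, then ties the two parts of the statement together.
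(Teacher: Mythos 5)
Your overall strategy matches the paper's, which offers no argument for this statement at all and simply assembles it from \cite{amver,bht,mark_tor,Mongardi:Cones}; your treatment of the stably prime exceptional half is also essentially the intended one, namely that $(X,D)$ and $(Y,D')$ lie in the same connected component of the Hodge locus of $D$ in the moduli space of marked pairs, so that ``very general deformation'' refers to the same pairs for both (this is \cite[Section~6]{mark_tor}). Your phrasing in terms of ``germs'' is slightly off --- the identification is of the whole connected Hodge locus, not of local deformation germs --- but the idea is right.

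The gap is in the wall-divisor half. Parallel transport from $X$ to $Y$ is a monodromy operator on lattices, but it is \emph{not} a Hodge isometry and does \emph{not} carry $\mathcal{BK}_X$ onto $\mathcal{BK}_Y$: the two manifolds have genuinely different Hodge structures, different Picard groups, and unrelated birational K\"ahler cones, and the only structure parallel transport preserves on the nose is the lattice with its $\Mon^2$-action. If the wall-divisor condition were literally ``phrased entirely in terms of deformation-invariant data,'' the theorem would be a tautology; in fact the deformation invariance of the condition $f(D)^\perp\cap\mathcal{BK}=\emptyset$ is the main theorem of \cite{Mongardi:Cones} (equivalently, the deformation invariance of MBM classes in \cite{amver}), and its proof requires either deforming the extremal rational curves dual to $D$ along the Hodge locus of $D$ or a global Torelli argument on that locus --- neither of which is formal. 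Your sentence ``Consequently $\mathcal{BK}_X$ is transported onto $\mathcal{BK}_Y$'' assumes exactly what is to be proved. Since you ultimately invoke \cite{Mongardi:Cones} anyway, the conclusion stands, but the explanatory bridge to it is circular, and the two false intermediate claims (that parallel transport preserves Hodge structures and that it preserves the birational K\"ahler cone) should be removed.
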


An intensive study of the birational geometry of (singular) moduli spaces of O'Grady type has been carried out by Meachan and Zhang \cite{Meach_Zhang}, they worked in the more general context of Bridgeland stability conditions and moduli spaces of stable objects in the derived category of a K3 surface, we will stick with the notion of Gieseker stability and moduli spaces of sheaves, and our (Bridgeland) stability condition will be a (Gieseker) stability condition parametrized by the choice of an ample class. For every $v$-generic stability condition $H$, there is an open set $\mathcal{C}$ of the space of stability conditions where all moduli spaces are isomorphic. This is called a Chamber of the space of stability conditions (in both senses).  

\begin{thm}\cite[Proposition 5.2 and Theorem 5.3]{Meach_Zhang}\label{thm:mz1}
Let $S$ be a K3 surface and let $v=2w$ be a Mukai vector of square $-8$. Let $H$ be a $v$-generic ample line bundle on $S$ and let $M_v(S,H)$ be the moduli space of semistable sheaves with Mukai vector $v$ and stability condition $H$. Let $\mathcal{C}$ be the chamber containing $H$ and let $H_0$ be an ample line bundle on $S$ in $\overline{\mathcal{C}}\setminus\mathcal{C}$.
Then the following hold:
\begin{enumerate}
\item There is a contraction map $\pi:\,M_v(S,H) \rightarrow M_v(S,H_0)$ which contracts precisely the locus of $\mathcal{S}$ equivalent $H_0$ semistable objects of $M_v(S,H)$.
\item The map $\pi$ is a divisorial contraction if and only if there exists a class $s\in H^{2*}(S,\ZZ)$ of type 1,1 with $s^2=-2$ and $(s,v)=(s,H_0)=0.$
\item The map $\pi$ is a small contraction if and only if there exists a class $s\in H^{2*}(S,\ZZ)$ of type 1,1 with $s^2=-2$, $(s,H_0)=0$ and $0<(s,v)\leq4.$
\end{enumerate} 
\end{thm}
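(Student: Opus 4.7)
The plan is to derive all three assertions from the theory of wall-crossing for Bridgeland-stable moduli on a K3 surface (Bayer--Macrì), extended to the non-primitive Mukai vector $v=2w$. First I would embed the segment from $H$ to $H_0$ inside the space of stability conditions $\operatorname{Stab}(S)$ via the large-volume limit, so that Gieseker $H$-semistability on objects of class $v$ coincides with Bridgeland $\sigma_H$-semistability; the $v$-genericity of $H$ then says that $\sigma_H$ lies in the interior of a chamber of $\operatorname{Stab}(S)$, while $\sigma_{H_0}$ sits on a single wall $\mathcal{W}$ crossed by the segment. By the Bayer--Macrì projectivity theorem the two moduli spaces are projective, and the natural map sending an $H$-polystable object to its $H_0$-polystable $S$-equivalence class is a projective birational morphism $\pi\colon M_v(S,H)\to M_v(S,H_0)$ whose exceptional locus is exactly the set of objects acquiring new Jordan--H\"older filtrations at $\mathcal{W}$. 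This proves (1).

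Next I would classify $\mathcal{W}$: any such wall corresponds to a primitive saturated hyperbolic rank-two sublattice $\mathcal{H}\subset \oH^{2*}(S,\ZZ)$ containing $v$, with the Mukai vectors of the new JH factors lying in $\mathcal{H}$. Inside $\mathcal{H}$ the orthogonal complement of $H_0$ is one-dimensional, spanned by a primitive class $s\in\mathcal{H}$ with $(s,H_0)=0$. A direct analysis of the possible wall types in the Bayer--Macrì framework, adapted to the non-primitive case, shows that $\mathcal{W}$ is a genuine wall precisely when $s$ can be taken spherical with $s^2=-2$; then $s$ is the Mukai vector of a rigid sheaf $E_s$ appearing as a JH factor. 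A routine Euler-characteristic computation on the stable pieces gives $\dim \operatorname{Ext}^1(E_{v-s},E_s)=(s,v)+2$, so the destabilised points of $M_v(S,H)$ are swept out by projective spaces of dimension $(s,v)+1$ over $M_{v-s}$; their codimension in $M_v(S,H)$ is therefore $(s,v)+1$. Hence $\pi$ is divisorial precisely when $(s,v)=0$, yielding (2), and small precisely when $1\leq(s,v)\leq 4$, the upper bound being forced by $(v-s)^2\geq -2$, beyond which no stable object of class $v-s$ exists and the candidate wall is fake; this gives (3).

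The main obstacle is the non-primitivity $v=2w$. Decomposable semistable sheaves $F_1\oplus F_2$ with $[F_i]=w$ already exist in the interior of every chamber and form the singular locus of $M_v(S,H)$; these are \emph{not} contracted further by $\pi$, even though they remain strictly semistable across $\mathcal{W}$. One must therefore track carefully the \emph{new} $S$-equivalences introduced by crossing $\mathcal{W}$, and perform the $\operatorname{Ext}^1$ count on the stable stratum so that only extensions involving the spherical factor $E_s$ contribute. This bookkeeping, absent in the primitive Bayer--Macrì setting, is what allows one to conclude that every genuine wall for $v=2w$ is detected by a class $s$ of precisely the form described in (2) or (3); carrying it out rigorously is the technical core of \cite{Meach_Zhang}.
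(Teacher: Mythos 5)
This statement is not proved in the paper at all: it is imported verbatim from Meachan--Zhang (note also the sign typo, $v^2=-8$ should read $v^2=8$ for $v=2w$, $w^2=2$), and the paper's entire ``proof'' is the citation. So the question is whether your sketch would stand on its own, and it does not quite. You correctly identify the framework --- embedding the Gieseker chambers into $\operatorname{Stab}^\dagger(S)$, the wall-crossing contraction morphism, and the codimension count $\operatorname{codim}=(s,v)+1$ for the locus swept by $\PP\bigl(\operatorname{Ext}^1(E_{v-s},E_s)\bigr)$-bundles over $M_{v-s}$, which is consistent with the lagrangian $\PP^5$ (for $(s,v)=4$) and the $\PP^3$-bundle over $\operatorname{Hilb}^2(S)$ (for $(s,v)=2$) that appear later in the paper. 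This is indeed the route Meachan--Zhang take.

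The genuine gaps are in the two ``only if'' directions, which you assert rather than derive. First, the claim that every genuine wall through $H_0$ is detected by a \emph{spherical} class $s$ with $(s,H_0)=0$ requires an argument: in the Bayer--Macr\`i classification for primitive vectors, divisorial contractions also arise from \emph{isotropic} classes (Hilbert--Chow and Li--Gieseker--Uhlenbeck walls) and from other decompositions $v=a+b$ inside the rank-two hyperbolic lattice; one must show these either cannot occur for a Gieseker wall in the ample cone or reduce to the spherical case. Second, the upper bound $(s,v)\le 4$ is justified in your sketch only by the non-existence of semistable objects of class $v-s$ when $(v-s)^2<-2$; that kills the particular two-step extension you consider, but not a priori the wall itself (one must also exclude decompositions $v=ms+(v-ms)$ and longer Jordan--H\"older filtrations, which for $v=2w$ are unavoidable since the strictly semistable locus is nonempty in every chamber). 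Your closing admission that this bookkeeping ``is the technical core of [MZ]'' makes the proposal an accurate roadmap of where the proof lives, mirroring the paper's own reliance on the citation, rather than a self-contained proof.
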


Let $Stab^\dagger(S)$ denote the space of (Bridgeland) stability conditions on $D^b(S)$ and let $\sigma\in Stab^\dagger(S)$ be general.
\begin{thm}\cite[Theorem 7.6]{Meach_Zhang}\label{thm:mz2}
There is a globally defined and continuous map $l\colon Stab^\dagger(S)\rightarrow NS(M_v(S,\sigma))$. The map is independent of $\sigma$ and the image of a generic stability condition $\tau$ is the birational model of $M_v(S,\sigma)$ given by $M_v(S,\tau)$. Moreover, $l$ is surjective on big and movable divisors on $M_v(S,\sigma)$ and every $\QQ$ factorial $K$ trivial model of $M_v(S,\sigma)$ which is isomorphic to it in codimension one arises as a moduli space $M_v(S,\tau)$ for some generic $\tau\in Stab^\dagger(S)$.
\end{thm}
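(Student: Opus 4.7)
The plan is to adapt the Bayer--Macrì construction from the smooth $K3^{[n]}$ case to the singular OG10 setting. First I would define the map $l$ on each open chamber of $Stab^\dagger(S)$ by the standard formula: given $\sigma=(Z,\mathcal{P})$ and a (quasi-)universal family $\mathcal{E}$ on $S\times M_v(S,\sigma)$, set
$$l_\sigma(C)=\Im\!\left(\frac{-Z(\Phi_\mathcal{E}(\cO_C))}{Z(v)}\right)$$
for a curve $C\subset M_v(S,\sigma)$, where $\Phi_\mathcal{E}$ is the associated Fourier--Mukai transform. Since $v=2w$ is non-primitive a genuine universal family does not exist, so one must work with the quasi-universal family constructed by Mukai; twisting is absorbed because the formula only depends on $v/Z(v)$. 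Continuity in $\sigma$ is immediate from continuity of $Z$.

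Next I would prove that $l_\sigma$ is independent of $\sigma$ inside a chamber and extends continuously across walls. Inside a chamber the moduli space $M_v(S,\sigma)$ does not change, and the computation above varies holomorphically in $\sigma$. Across a wall $W$ one distinguishes three cases using Theorem~\ref{thm:mz1}: the map is an isomorphism, a small (flopping) contraction, or a divisorial contraction. In the first two cases the Néron--Severi groups of the two moduli spaces are canonically identified (in codimension one), and one checks via the projection formula and the compatibility of $\Phi_\mathcal{E}$ with the wall-crossing functor that the two definitions of $l$ glue continuously. In the divisorial case one identifies $l_{\sigma_0}$ on the wall with the pullback of a class from the contracted model. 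The singular nature of $M_v$ is handled by pulling back to the symplectic resolution $\widetilde{M}_v$ from Theorem~\ref{thm:og10prelim} and using the explicit description of $\oH^2(\widetilde{M}_v,\ZZ)$ to transfer statements between the singular moduli space and its resolution.

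To identify $l_\tau$ with the birational model given by $M_v(S,\tau)$, I would argue that $l_\tau$ is nef on $M_v(S,\sigma)$ (the positivity lemma: each $\sigma$-stable object contributes non-negatively to the pairing with $l_\tau$) and that the induced map contracts exactly the locus of objects which are not $\tau$-stable. Uniqueness of the ample model then forces the image to be $M_v(S,\tau)$. Surjectivity of $l$ onto the big movable cone is then deduced by reversing this: given a big movable class $\alpha$, Theorem~\ref{thm:mz2} (or rather its target statement here) is combined with the wall-and-chamber structure of $Stab^\dagger(S)$, which covers the movable cone via the images of all chambers, to produce a $\tau$ with $l_\tau=\alpha$.

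Finally, for the last clause, let $Y$ be a $\QQ$-factorial $K$-trivial model isomorphic to $M_v(S,\sigma)$ in codimension one. The pullback of an ample class on $Y$ lies in the interior of the movable cone, so by the previous step it equals $l_\tau$ for some generic $\tau$, and the resulting contraction from $M_v(S,\sigma)$ factors through $M_v(S,\tau)$; one then shows that $M_v(S,\tau)\to Y$ is an isomorphism in codimension one between normal $\QQ$-factorial varieties with trivial canonical class, hence an isomorphism by a standard argument. The hardest step, and the one demanding the bulk of the technical work, will be the global continuity and well-definedness across walls in the singular setting: one must verify that the singular strata of $M_v$ behave predictably under wall-crossing, in particular that $\mathcal{S}$-equivalence classes of strictly semistable objects match up on both sides, and that the pullback to the symplectic resolution gives a single globally defined Néron--Severi class rather than merely a piecewise one.
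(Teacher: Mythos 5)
This statement is quoted verbatim from Meachan--Zhang \cite[Theorem 7.6]{Meach_Zhang}; the paper itself gives no proof, so the only meaningful comparison is with the source. Your sketch does reproduce the actual strategy of \cite{Meach_Zhang}, which adapts Bayer--Macr\`i to the singular O'Grady setting: define $l_\sigma$ chamber by chamber via $l_\sigma.C=\Im\bigl(-Z(\Phi_{\mathcal{E}}(\cO_C))/Z(v)\bigr)$ for a quasi-universal family $\mathcal{E}$ (your remark that the non-primitivity of $v=2w$ forces quasi-universal families, and that the formula is insensitive to the twist, is the right observation), use the positivity lemma to identify $l_\tau$ with the pullback of an ample class from the model $M_v(S,\tau)$, and glue across walls using the contraction morphisms classified in Theorem~\ref{thm:mz1}. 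The final clause (two $\QQ$-factorial $K$-trivial models sharing an ample model and isomorphic in codimension one are isomorphic) is also handled correctly.

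The genuine gap is in the surjectivity onto big and movable divisors. As written, you deduce it ``by reversing'' the previous step and by invoking ``Theorem~\ref{thm:mz2} (or rather its target statement here)'' --- that is, the statement you are proving. The substantive content of this part of the theorem is precisely that the nef cones of the models $M_v(S,\tau)$, as $\tau$ ranges over the chambers of $Stab^\dagger(S)$, tile the \emph{entire} big-and-movable cone of $M_v(S,\sigma)$, and not merely some subcone of it. Establishing this requires an argument that is absent from your sketch: one must control the behaviour of $l$ as $\sigma$ degenerates to the boundary of $Stab^\dagger(S)$ (where $l_\sigma$ limits onto isotropic classes coming from lagrangian fibrations, or onto the boundary of the positive cone) and combine this with an openness-and-closedness argument to see that the image of $l$ exhausts the big movable cone. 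Without this, ``given a big movable class $\alpha$, produce $\tau$ with $l_\tau=\alpha$'' is an assertion, not a proof. A secondary, smaller issue: ``continuity in $\sigma$ is immediate from continuity of $Z$'' is only valid inside a fixed chamber, where the target $NS(M_v(S,\sigma))$ does not move; the global statement is exactly the wall-crossing identification you defer to later, so it should not be labelled immediate.
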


We will need two well known results in lattice theory, which both go under the name of Eichler's criterion. For any even lattice $L$, we can define the discriminant group $A_L:=L^\vee/L$ and this inherits a quadratic form from $L$ with values in $\mathbb{Q}/2\mathbb{Z}$. Any isometry of $L$ has then an induced action on $A_L$, and the kernel of this map is the subgroup $\widetilde{O}(L)$.
Moreover, to any element $v\in L$ we can define its divisibility $\div(v)$ as the positive generator of the ideal $(v,L)$. This gives a natural map $L\rightarrow A_L$ sending $v$ to $[v/\div(v)]$. We will make use of two versions of what is classically known as "Eichler's Lemma'', see \cite[Lemma 3.5]{GHS10}.
\begin{lem}\label{lem:eichler}
Let $L'$ be an even lattice and let $L=U^2\oplus L'$. Let $v,w\in L$. Suppose in addition that
\begin{itemize}
\item $v^2=w^2$,
\item $[v/\div(v)]=[w/\div(w)]$ in $A_L$.
\end{itemize}
Then there is an isometry in $\widetilde{SO}^+(L)$ sending $w$ to $v$.
\end{lem}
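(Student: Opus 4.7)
I would prove this classical lemma by constructing the required isometry explicitly as a product of \emph{Eichler transvections}. Recall that for an isotropic vector $e\in L$ and any $a\in e^\perp$, the formula
$$E(e,a)(x)=x+(x,e)\,a-(x,a)\,e-\tfrac{1}{2}(a,a)(x,e)\,e$$
defines an isometry of $L$. A direct check shows that $E(e,a)\in\widetilde{SO}^+(L)$: the determinant is $+1$, orientation is preserved (it is the identity on the positive-definite part up to a continuous deformation through the identity), and it acts trivially on $A_L=L^\vee/L$ because it differs from the identity by a map whose image lies in $\ZZ e+\ZZ a\subset L$ with coefficients that are integral pairings. The plan is therefore to show that the subgroup generated by such transvections acts transitively on pairs $(v^2,[v/\div(v)])$.

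The first step is to reduce both $v$ and $w$ to a common normal form using the two hyperbolic summands. Fix hyperbolic bases $(e_1,f_1)$ and $(e_2,f_2)$ of the two copies of $U$, and expand $v=\alpha_1 e_1+\beta_1 f_1+\alpha_2 e_2+\beta_2 f_2+\lambda$ with $\lambda\in L'$. Applying transvections $E(e_1,a)$ with $a$ chosen in $\langle e_2,f_2\rangle\oplus L'$ allows one to kill the components of $v$ outside $\langle e_1,f_1\rangle\oplus\ZZ\lambda_0$ for a suitable primitive $\lambda_0$ representing $[v/\div(v)]$; whenever a coefficient on which one wants to act vanishes, one first applies a transvection involving $e_2,f_2$ to arrange a nonzero pairing (this is the reason one needs $U^2$ and not just a single $U$). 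After these manipulations $v$ is carried into the canonical form
$$v_0=d\,e_1+\tfrac{v^2}{2d}\,f_1+s,\qquad d=\div(v),\; s\in L',$$
where the class of $s$ modulo $d L'$ is determined by $[v/\div(v)]\in A_L$.

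The second step is immediate: both hypotheses of the lemma, $v^2=w^2$ and $[v/\div(v)]=[w/\div(w)]$, depend only on the two invariants appearing in the normal form, so the reduction applied to $w$ yields the same $v_0$. Composing the reduction for $v$ with the inverse of the reduction for $w$ yields the desired isometry in $\widetilde{SO}^+(L)$ sending $w$ to $v$.

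The main obstacle is the bookkeeping in the first step: one must verify that a finite sequence of transvections suffices, handling carefully the degenerate cases where some coefficient vanishes and no single transvection with the chosen isotropic vector produces a nonzero change. Invoking the second hyperbolic plane to first perturb $v$ into ``general position'' is the standard trick that makes the argument uniform. Since this is a classical computation carried out, for instance, in \cite{GHS10}, I would quote it rather than reproduce the explicit formulas in full.
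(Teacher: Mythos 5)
Your proposal is correct and matches the paper's treatment: the paper gives no proof of this lemma at all, simply citing \cite[Lemma 3.5]{GHS10}, and your sketch is precisely the standard Eichler-transvection argument behind that reference (transvections $E(e,a)$ lie in $\widetilde{SO}^+(L)$, the two hyperbolic planes are used to escape degenerate configurations, and the normal form is determined by the square and the class in $A_L$). The only slip is in your canonical form: with $s\in L'$ a representative of the discriminant class, the $f_1$-coefficient should be $(v^2-s^2)/(2\,\div(v))$ rather than $v^2/(2\,\div(v))$, so that $v_0^2=v^2$; this does not affect the argument since both $v$ and $w$ still reduce to the same vector.
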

\begin{lem}\label{lem:eichler_gen}
Let $L'$ be an even lattice and let $L=U^n\oplus L'$. Let $S,T$ be two sublattices of $L$ such that the following holds:
\begin{itemize}
\item There are two basis of $S=\langle s_1,\dots, s_r \rangle$ and $T=\langle t_1,\dots, t_r\rangle$ of the same cardinality, and $r\leq n-1$. 
\item For all $i,j$ we have $(s_i,s_j)=(t_i,t_j)$.
\item For all $i$, we have $[s_i/\div(s_i)]=[t_i/\div(t_i)]$.
\end{itemize}
Then there is an isometry in $\widetilde{SO}^+(L)$ sending $S$ to $T$.
\end{lem}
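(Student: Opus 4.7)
I would prove Lemma~\ref{lem:eichler_gen} by induction on $r$. The base case $r=1$ is exactly Lemma~\ref{lem:eichler}, applicable because $L$ contains $U^2$ as a direct summand (since $n \geq r+1 = 2$).

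For the inductive step, suppose the result is known for rank $r-1$ and let $S = \langle s_1,\dots,s_r\rangle$, $T = \langle t_1,\dots,t_r\rangle$ satisfy the hypotheses with $r \leq n-1$. Apply Lemma~\ref{lem:eichler} to the pair $(s_1,t_1)$, which have equal square and equal class in $A_L$, to obtain $\varphi \in \widetilde{SO}^+(L)$ with $\varphi(s_1) = t_1$. Renaming $\varphi(s_i) \rightsquigarrow s_i$, we may assume $s_1 = t_1$. A further application of Lemma~\ref{lem:eichler} puts $s_1 = t_1$ into a standard position $v_0$ contained in the summand $U \oplus L'$ formed by the first hyperbolic plane together with $L'$, so that the remaining $n-1$ copies of $U$ sit entirely inside $v_0^{\perp}$. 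Writing $v_0^{\perp}\cap L = U^{n-1} \oplus M'$ for some even lattice $M'$, one sets up the induction inside this smaller lattice.

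To that end, for each $i \geq 2$ choose integers $a_i$ so that $s_i' := s_i - a_i v_0$ and $t_i' := t_i - a_i v_0$ lie in $v_0^\perp \cap L$; the fact that the same $a_i$ works for both is forced by the Gram matrix hypothesis $(s_i,s_1) = (t_i,t_1)$ together with $s_1 = t_1 = v_0$. By construction the bases $(s_2',\dots,s_r')$ and $(t_2',\dots,t_r')$ have identical Gram matrices, and they live in the lattice $U^{n-1} \oplus M'$ with $r-1 \leq (n-1)-1$. If one can verify the discriminant condition in $A_{v_0^\perp \cap L}$, the inductive hypothesis produces an isometry of $v_0^\perp \cap L$ carrying $s_i'$ to $t_i'$ and acting trivially on its discriminant group; extending by the identity on $\langle v_0\rangle$ gives an isometry of $L$ (well-defined on the overlattice $L \supseteq \langle v_0\rangle \oplus (v_0^\perp \cap L)$), and a final check ensures it lies in $\widetilde{SO}^+(L)$.

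\emph{The main obstacle} is precisely the transfer of the discriminant class condition from $A_L$ to $A_{v_0^\perp \cap L}$. The classes $[s_i/\div_L(s_i)]$ and $[t_i/\div_L(t_i)]$ are equal in $A_L$ by hypothesis, but after subtracting the multiple of $v_0$ the divisibilities in the orthogonal complement can change, and one must analyse the glue-group arising from the primitive extension $\langle v_0\rangle \oplus (v_0^\perp\cap L) \hookrightarrow L$ in order to track the classes $[s_i'/\div(s_i')]$ and $[t_i'/\div(t_i')]$. This bookkeeping is the technical heart of the argument; once completed, the two preliminary applications of Lemma~\ref{lem:eichler} together with the inductive hypothesis yield the isometry in $\widetilde{SO}^+(L)$ mapping $S$ to $T$.
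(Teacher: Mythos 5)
The paper does not actually prove this lemma: it states it next to Lemma~\ref{lem:eichler} as a known form of Eichler's criterion (citing \cite[Lemma 3.5]{GHS10}) and only remarks that ``the second Lemma is a generalization of the first'', so there is no written argument to compare yours against. Your inductive outline is a natural way to attempt a proof, but as written it contains a step that fails. The reduction $s_i' := s_i - a_i v_0$ with $a_i\in\ZZ$ landing in $v_0^\perp\cap L$ is not available in general: orthogonality forces $a_i=(s_i,v_0)/(v_0,v_0)$, which need not be an integer and does not exist at all when $v_0$ is isotropic --- a case you cannot exclude, and indeed the one relevant to several applications in this paper (square-zero divisors, Mukai-type vectors). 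So the passage from $(s_1,\dots,s_r)$ to a tuple inside $U^{n-1}\oplus M'$ does not go through as stated. The standard fix is to avoid projecting altogether and instead work in the stabilizer of $v_0$: after putting $v_0$ in Eichler canonical form inside $U_1\oplus L'$, use Eichler transvections $E_{e,a}$ with $e$ isotropic in the remaining copies of $U$ (hence orthogonal to $v_0$) and $a\perp e,\,a\perp v_0$; these fix $v_0$, and one proves a relative transitivity statement for vectors with prescribed square, prescribed pairing with $v_0$, and prescribed discriminant class.

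Second, you explicitly defer the transfer of the condition $[s_i/\div(s_i)]=[t_i/\div(t_i)]$ from $A_L$ to $A_{v_0^\perp\cap L}$, calling it the technical heart of the argument. That is exactly where the naive induction is delicate (divisibilities change under projection, and the glue group of the primitive extension $\langle v_0\rangle\oplus(v_0^\perp\cap L)\subset L$ enters), and since it is not carried out, the proposal is an outline with an identified missing step rather than a complete proof. In summary: correct base case, reasonable inductive skeleton, but one step that is false as stated (integral projection onto $v_0^\perp$) and one acknowledged gap (the discriminant bookkeeping) that together leave the lemma unproved.
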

Clearly, the second Lemma is a generalization of the first.


\section{Lagrangian fibrations}\label{sec:lagr fibr}

\begin{lem}\label{lem:lagrorbit}
Let $l\in L:=U^3\oplus E_8(-1)^2 \oplus A_2(-1)$ be a primitive  element of square zero. Then $div(l)=1$ and there is a single orbit for the action of $O^+(L)$.
\end{lem}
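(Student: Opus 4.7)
The plan is to exploit the very small discriminant group of $L$ together with Eichler's Lemma (Lemma \ref{lem:eichler}).

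First, I would compute the discriminant group. Since $U$ and $E_8(-1)$ are unimodular, the discriminant group of $L$ equals that of $A_2(-1)$, namely $A_L \cong \mathbb{Z}/3\mathbb{Z}$. Picking a standard basis $e_1, e_2$ of $A_2(-1)$ with $e_i^2=-2$ and $(e_1,e_2)=1$, the class $[\,(2e_1+e_2)/3\,]$ generates $A_L$, and a direct computation gives $q([\,(2e_1+e_2)/3\,]) = -2/3 \pmod{2\mathbb{Z}}$. Hence \emph{every} nonzero element of $A_L$ has nonzero discriminant-form value in $\mathbb{Q}/2\mathbb{Z}$.

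Next, I would deduce that $\operatorname{div}(l)=1$. Write $d = \operatorname{div}(l)$ so that $l/d \in L^\vee$. Since $l$ is primitive in $L$, the class $[l/d]\in A_L$ is nonzero precisely when $d>1$. But $q(l/d) = q(l)/d^2 = 0$ in $\mathbb{Q}/2\mathbb{Z}$, and the previous paragraph forces $[l/d] = 0$. Therefore $d=1$.

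Finally, I would invoke Eichler's Lemma. Write $L = U^2 \oplus L'$ with $L' = U\oplus E_8(-1)^2 \oplus A_2(-1)$, which is an even lattice, so Lemma \ref{lem:eichler} applies. Given any two primitive isotropic elements $l,l' \in L$, both have divisibility $1$ by the previous step, so $[l/\operatorname{div}(l)] = 0 = [l'/\operatorname{div}(l')]$ in $A_L$; they also have the same square $0$. Hence there exists an isometry in $\widetilde{SO}^+(L) \subset O^+(L)$ sending $l'$ to $l$, which gives the single-orbit statement.

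The computations are all elementary; the only delicate point is checking that the discriminant form on $A_{A_2(-1)}$ takes nonzero values on all nonzero classes, which is what rules out the a priori possible divisibility $3$. Everything else is a direct application of the lemmas stated in the preliminaries.
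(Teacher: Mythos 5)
Your proof is correct and follows essentially the same route as the paper: rule out divisibility $3$ using the $A_2(-1)$ summand, then apply Eichler's Lemma (with the $U^2$ splitting) for transitivity of $O^+(L)$. The only difference is cosmetic — you phrase the exclusion of divisibility $3$ via the nonvanishing of the discriminant form on the nonzero classes of $A_L\cong\ZZ/3\ZZ$, while the paper computes directly that a divisibility-$3$ vector has square $\equiv -6a^2\pmod{18}$; these are the same computation.
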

\begin{proof}
The discriminant group of $L$ is of three torsion, hence $l$ can have only divisibility one or three.
Inside $A_2(-1)$, any element $s$ of divisibility three has the form $e_1+2e_2+3t$ or $-e_1-2e_2+3t$, where $e_1$ and $e_2$ are the standard generators with $e_1^2=e_2^2=-2$ and $\langle e_1, e_2 \rangle=1$ and $t$ is any element. It follows that, modulo $18$, the square of such an element is $-6$. 
Therefore, a primitive element of divisibility $3$ inside $L$ is an element of the form $3w+as$ for some $w\in U^3\oplus E_8(-1)^2$, $a$ not divisible by three and $s\in A_2(-1)$ primitive of divisibility three. The square of such an element is congruent to $-6a^2$ modulo $18$, which cannot be zero. Therefore $l$ has divisibility one and, by Lemma \ref{lem:eichler}, the action of $O^+(L)$ has a single orbit.
\end{proof}
The above Lemma is the technical core of the section, as for any irreducible holomorphic symplectic manifold $X$, if $p\colon X\to \mathbb{P}^n$ is a lagrangian fibration, the divisor $p^*(\mathcal{O}(1))$ is primitive, nef and isotropic. In particular, if a divisor is induced by a lagrangian fibration on a different birational model of $X$, it will be isotropic and in the boundary of the Birational K\"ahler cone. The following is a converse for manifolds of OG10 type:

\begin{thm}\label{thm:lagr}
Let $X$ be an irreducible holomorphic symplectic manifold of OG10 type and let $\mathcal{O}(D)\in Pic(X)$ be a non-trivial line bundle whose Beauville-Bogomolov  square is $0$.
Assume that the class $[D]$ of $\mathcal{O}(D)$ belongs to the boundary of the birational K\"ahler cone of $X$. 

Then, there exists  a smooth irreducible holomorphic symplectic manifold $Y$ and a bimeromorphic map 
$\psi\colon Y\dashrightarrow X$ such that $\mathcal{O}(D)$ induces  a lagrangian fibration $p\colon Y\rightarrow \mathbb{P}^5$. 
Moreover, smooth fibres of $p$ are, up to isogeny, principally polarized abelian fivefolds.
\end{thm}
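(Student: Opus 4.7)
The plan is to combine the lattice-theoretic description of primitive isotropic classes from Lemma~\ref{lem:lagrorbit}, the identification of the monodromy group in Theorem~\ref{monodromy}, and the explicit lagrangian fibration of Example~\ref{ex:torsion} via a deformation argument of the pair $(X,D)$ to an explicit model.

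I would start with the model case: taking a very general K3 surface $S$ of genus $2$ and Mukai vector $v=(0,2H,s)$ with $s$ even, the Fitting morphism composed with the symplectic desingularisation yields a lagrangian fibration $f_0\colon\widetilde{M}_v\to|2H|=\PP^5$. By adjunction a smooth $C\in|2H|$ has genus $5$ (since $C^2=4H^2=8$), so the smooth fibres of $f_0$ are Jacobians of genus $5$ curves, in particular principally polarised abelian fivefolds. The class $D_0:=f_0^*\cO_{\PP^5}(1)$ is primitive, isotropic, nef, and lies on the boundary of the birational K\"ahler cone of $\widetilde{M}_v$. By Lemma~\ref{lem:lagrorbit}, primitive isotropic vectors in the OG10 lattice form a single $\Or^+$-orbit, and combined with Theorem~\ref{monodromy} every primitive isotropic class in $\oH^{1,1}(X,\ZZ)$ of an OG10 manifold $X$ can be realised, after parallel transport along a smooth family of marked OG10 pairs, as the model class $D_0$ above.

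The next step is the deformation argument: using the surjectivity of the period map and Verbitsky's global Torelli theorem for IHS manifolds, I would construct a smooth path of marked OG10 pairs connecting $(X,D)$ with $(\widetilde{M}_v,D_0)$ along which the marked class stays of type $(1,1)$ and remains on the boundary of the birational K\"ahler cone. The second condition is the delicate point: the wall-and-chamber structure of the birational K\"ahler cone varies locally constantly in families away from the Hodge loci of the finitely many wall-divisor classes of the same square and divisibility as $D$, and one arranges the path to avoid these loci. Once such a path is in place, the existence of a lagrangian fibration with isotropic class $D$ on some smooth birational model transfers along the family from $\widetilde{M}_v$ to $X$: base-point-freeness of some multiple $|mD|$ on an appropriate birational model is deformation-invariant in this setup, and Matsushita's theorem identifies the base of the resulting morphism with $\PP^5$. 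The general fibres of the resulting $p$ are complex tori by Matsushita, and their polarisation type is locally constant up to isogeny in smooth families of lagrangian fibrations, so starting from the ppav fibres in the model case we conclude that the smooth fibres of $p$ are principally polarised abelian fivefolds up to isogeny.

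The main obstacle is arranging the deformation so that the class stays in the boundary of the birational K\"ahler cone, and rigorously transferring the fibration structure across the family. This rests on the classification of wall-divisors and prime exceptional divisors for OG10 manifolds obtained in later sections of the paper, together with Matsushita's results on lagrangian fibrations and their behaviour in families.
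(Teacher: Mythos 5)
Your proposal is correct and follows essentially the same route as the paper: reduce to the torsion-sheaf model $\widetilde{M}_{(0,2H,s)}$ with its Fitting fibration, use Lemma~\ref{lem:lagrorbit} together with the maximality of the monodromy group to see that all primitive isotropic classes lie in one orbit, and transfer the fibration by deformation of the pair $(X,\mathcal{O}(D))$. The paper packages the step you flag as delicate by citing Matsushita's deformation theorem for isotropic divisors (which gives the dichotomy ``holds for all such deformations or never'') together with Wieneck's result on polarisation types and Markman's connectedness of the moduli space of marked pairs, rather than constructing the path by hand.
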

\begin{proof}
The proof of this theorem is completely analogous to \cite[Theorem 7.2]{MR} for the case of O'Grady type sixfolds, for the convenience of the reader we sketch it here.
By the work of Matsushita \cite[Theorem 1.2]{matsu_lagr} and Wieneck \cite[Theorem 1.1]{wie}, the statement of the Theorem either holds for all deformations of the pair $(X,\mathcal{O}(D))$ where the parallel transport of $[D]$ belongs to the boundary of the birational K\"ahler cone, or never holds.
By Lemma \ref{lem:lagrorbit} and \cite[Section 5.3 and Lemma 5.17(ii)]{mar_prime}, the moduli space of pairs $(X,\mathcal{O}(D))$ with $D$ primitive and isotropic is connected. 
Therefore, we only need to provide an example of a lagrangian fibration with principally polarized fibres, but such an example is well known and represented by moduli spaces $M_v(S,H)$ of torsion sheaves as in Example~\ref{ex:torsion}. Notice that the general fibre of the Fitting morphism is isomorphic to a jacobian of genus five curves, therefore it is (up to isogeny) a principally polarized abelian variety of dimension 5.
\end{proof}
The above Theorem has an immediate consequence concerning the movable cone of a projective manifold of OG10 type:
\begin{cor}
Let $X$ be a projective manifold of OG10 type. Then the movable cone of $X$ is $\overline{\mathcal{BK}}_X\cap H^{1,1}(X,\mathbb{Z})$.
\end{cor}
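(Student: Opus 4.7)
The plan is to establish both inclusions. For $\overline{\mathcal{BK}}_X\cap \oH^{1,1}(X,\ZZ)\subseteq\operatorname{Mov}(X)$, any class $D\in\mathcal{BK}_X$ is K\"ahler on some smooth birational model of $X$ and hence movable there, so $D$ itself is movable via the identification of Picard groups of birational irreducible holomorphic symplectic manifolds. Classes on the boundary of $\mathcal{BK}_X$ are limits of such $D$: if their Beauville--Bogomolov--Fujiki square is positive, they are limits of big movable classes and hence movable by closedness of the movable cone; if they are isotropic, then Theorem~\ref{thm:lagr} produces a smooth birational model $\psi\colon Y\dashrightarrow X$ on which $D$ induces a lagrangian fibration $p\colon Y\to\PP^5$, and then $D=p^{*}\cO(1)$ is nef and in particular movable on $Y$, hence on $X$.

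For the reverse inclusion $\operatorname{Mov}(X)\subseteq \overline{\mathcal{BK}}_X\cap\oH^{1,1}(X,\ZZ)$, I appeal to Markman's description of the movable cone of a projective irreducible holomorphic symplectic manifold as the closure of the union of the K\"ahler cones of its smooth birational models (\cite{mark_tor}); combined with the classification of stably prime exceptional divisors in Proposition~\ref{prop:pex}, this identifies the interior of $\operatorname{Mov}(X)$ with $\mathcal{BK}_X$. Since $\operatorname{Mov}(X)$ is a convex cone with non-empty interior, it equals the closure of its interior, and the inclusion follows by taking closures and intersecting with the integral lattice.

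The principal obstacle concerns the isotropic boundary $\{q=0\}$: without further input one cannot rule out that $\overline{\mathcal{BK}}_X$ contains integral isotropic classes outside $\operatorname{Mov}(X)$, or conversely that $\operatorname{Mov}(X)$ has isotropic boundary classes outside $\overline{\mathcal{BK}}_X$. Theorem~\ref{thm:lagr} is precisely the tool that identifies every such isotropic boundary class with a lagrangian fibration on a birational model, ensuring that the two cones coincide along $\{q=0\}$ and closing the argument.
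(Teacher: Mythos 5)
Your overall strategy coincides with the paper's: reduce to the identity $\overline{\operatorname{Mov}(X)}=\overline{\mathcal{BK}}_X\cap\oH^{1,1}(X,\ZZ)$ and then check that every class in this set is genuinely movable, treating positive-square and isotropic classes separately and invoking Theorem~\ref{thm:lagr} for the isotropic ones. The isotropic case is handled correctly and is indeed the only point where anything specific to OG10 type enters.

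There is, however, a circularity in your treatment of the positive-square classes on the boundary of $\mathcal{BK}_X$. You assert that they are ``limits of big movable classes and hence movable by closedness of the movable cone''. The movable cone is not closed by definition (it is spanned by classes of divisors whose base locus has codimension at least two), and its closedness for manifolds of OG10 type is precisely a \emph{consequence} of the corollary you are proving --- the paper remarks on this explicitly immediately after the proof. What is needed at this step is the non-trivial input that integral classes of positive square in $\overline{\mathcal{BK}}_X$ are movable, which is \cite[Corollary 15]{hass_ts_moving}; without some such reference or argument the step does not close. A second, more minor, slip: a convex cone with non-empty interior need not equal the closure of its interior (only its closure does), so your argument for the reverse inclusion is phrased incorrectly; but that inclusion requires nothing beyond $\operatorname{Mov}(X)\subseteq\overline{\operatorname{Mov}(X)}=\overline{\mathcal{BK}}_X\cap\oH^{1,1}(X,\ZZ)$, the equality being \cite[Theorem 7]{hass_ts_moving}, so it is easily repaired.
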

\begin{proof}
By \cite[Theorem 7]{hass_ts_moving}, the closure of the movable cone equals $\overline{\mathcal{BK}}_X\cap H^{1,1}(X,\mathbb{Z})$.
By \cite[Corollary 15]{hass_ts_moving}, all elements of $\overline{\mathcal{BK}}_X\cap H^{1,1}(X,\mathbb{Z})$ with positive square are in the movable cone. By Theorem~\ref{thm:lagr}, also isotropic elements in this intersection are in the movable cone.
\end{proof}
Notice in particular that the movable cone is closed.\\
By using a result of Riess \cite[Theorem 4.2]{rie}, we obtain as a corollary that the Weak Splitting property conjectured by Beauville \cite{beau07} holds when the manifold has a square zero divisor.
\begin{cor}\label{cor:weak splitting}
Let $X$ be a projective irreducible holomorphic symplectic manifold of OG10 type and let $D$ be an isotropic divisor on it. Let $DCH(X)\subset CH_{\mathbb{Q}}(X)$ be the subalgebra generated by divisor classes. Then the restriction of the cycle class map $$cl_{|DCH(X)}\colon DCH(X)\to \oH^*(X,\mathbb{Q})$$ is injective.
\end{cor}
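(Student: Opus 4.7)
The plan is to combine Theorem~\ref{thm:lagr} with Rie\ss{}'s result \cite[Theorem 4.2]{rie}. Theorem~\ref{thm:lagr} produces a birational Lagrangian fibration out of an isotropic class in the closure of the birational K\"ahler cone, and \cite[Theorem 4.2]{rie} then extracts the weak splitting property from such a fibration.

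First I would produce a primitive isotropic class $D_0 \in \Pic(X)$ lying in $\overline{\mathcal{BK}_X}$. The hypothesis that $\Pic(X)$ contains an isotropic class implies that the null quadric $\{q = 0\} \subset \Pic(X)_\RR$ has a rational point, so its rational points are Zariski dense by the classical stereographic projection argument from a rational point. On the other hand, $\mathcal{BK}_X$ is a non-empty open chamber of $\mathcal{C}(X)$ whose closure meets the null cone in a non-empty open subset of the latter: this is a feature of the chamber decomposition, since the walls are hyperplanes orthogonal to \emph{negative-square} classes and therefore cannot cover any open set of the null cone. Density then furnishes a primitive integral isotropic class $D_0 \in \overline{\mathcal{BK}_X}$. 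Alternatively, one can use Lemma~\ref{lem:lagrorbit} together with Theorem~\ref{monodromy} and an Eichler-type argument to transport the given $D$ by a Hodge monodromy operator into $\overline{\mathcal{BK}_X}$.

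Applying Theorem~\ref{thm:lagr} to $D_0$ yields a smooth IHS manifold $Y$, a bimeromorphic map $\psi \colon Y \dashrightarrow X$, and a Lagrangian fibration $p \colon Y \to \mathbb{P}^5$ induced by $\cO(D_0)$. Rie\ss{}'s theorem \cite[Theorem 4.2]{rie} then gives the weak splitting property for $Y$, and since the subalgebra $DCH$ and the cycle class map are preserved under birational isomorphisms of projective irreducible holomorphic symplectic manifolds (also contained in Rie\ss{}'s framework), this transfers back to $X$. The main subtlety is verifying that $\overline{\mathcal{BK}_X}$ really does meet the null cone in a non-empty open subset; once that is in place, the remainder is a clean deduction from Theorem~\ref{thm:lagr} and \cite[Theorem 4.2]{rie}.
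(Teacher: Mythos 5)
Your overall strategy (reduce to Theorem~\ref{thm:lagr} and then invoke Rie\ss{} \cite[Theorem 4.2]{rie}) is the same as the paper's, but the step where you produce an isotropic class in $\overline{\mathcal{BK}}_X$ contains a genuine gap. You claim that $\overline{\mathcal{BK}}_X$ meets the null cone of $\Pic(X)_\RR$ in a non-empty \emph{open} subset of the null cone, arguing that the walls $E^\perp$ (for $E$ of negative square) cannot cover an open set of the null cone. That argument only shows that the union of the closures of \emph{all} chambers is dense in the null cone; it says nothing about the single chamber $\overline{\mathcal{BK}}_X$. In fact the claim is false in general: already for a K3 surface of Picard rank $3$ whose Weyl group has finite covolume with cusps, the closure of the ample cone touches the boundary of the positive cone only along a discrete set of (rational) isotropic rays, which is nowhere dense in the null cone. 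The same phenomenon is expected for the exceptional chamber of an OG10 manifold, so Zariski density of rational points on the quadric $\{q=0\}$ cannot by itself place one of them in $\overline{\mathcal{BK}}_X$. Your fallback suggestion --- Lemma~\ref{lem:lagrorbit} plus Theorem~\ref{monodromy} plus ``an Eichler-type argument'' --- does not close the gap either: those results give transitivity of the \emph{full} orthogonal group on primitive isotropic vectors of the abstract lattice, whereas you need an operator that is a \emph{Hodge} monodromy (indeed, one preserving $\Pic(X)$ and the birational geometry), which Eichler's criterion does not provide.

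The missing ingredient is exactly what the paper cites from \cite[Section 6]{mark_tor}: the Weyl group generated by reflections in stably prime exceptional divisors consists of Hodge monodromy operators and acts on the positive cone (including its rational boundary points) with the closure of the fundamental exceptional chamber as a fundamental domain; since $\overline{\mathcal{BK}}_X$ coincides with the closure of that chamber, any isotropic divisor can be moved (after possibly changing sign) into $\partial\mathcal{BK}_X$ by such a reflection. With that statement in hand, the rest of your argument --- applying Theorem~\ref{thm:lagr} to obtain a birational model with a lagrangian fibration and then quoting Rie\ss{}, whose theorem is already stated for manifolds one of whose birational models fibres --- is correct and is precisely the paper's proof.
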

\begin{proof}
\cite[Theorem 4.2]{rie} proves that the Weak Splitting property holds for all manifolds $X$ such that one of their birational model has a lagrangian fibration. By \cite[Section 6]{mark_tor}, a manifold with a square zero divisor has a square zero divisor in the boundary of the birational K\"ahler cone and from Theorem \ref{thm:lagr} it follows that $X$ has a birational model with a lagrangian fibration.   
\end{proof} 

\section{The birational K\"ahler cone}\label{sec:movable}

\begin{prop}\label{prop:pex}
Let $X$ be a manifold of OG10 type and let $D\in Pic(X)$ be a primitive divisor. Then a multiple of $D$ is stably prime exceptional if and only if $D^2=-2$ or $D^2=-6$ and $div(D)=3$.
\end{prop}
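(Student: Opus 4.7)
The plan is to split the equivalence into its two implications: the ``if'' direction will be settled by exhibiting explicit prime exceptional divisors on the original O'Grady moduli space, and the ``only if'' direction by deforming to a Gieseker moduli space and invoking the Meachan--Zhang classification of divisorial contractions. Before tackling either, I would reduce to a classification of $\Or^+(L)$-orbits, where $L = U^3 \oplus E_8(-1)^2 \oplus A_2(-1)$: since being stably prime exceptional is preserved by parallel transport and, by Theorem \ref{monodromy}, the monodromy group is all of $\Or^+(L)$, the statement depends only on the orbit of $D$. By Eichler's criterion (Lemma \ref{lem:eichler}) such an orbit is parametrised by the pair $(D^2, [D/\div(D)])$, and a repetition of the divisibility argument in the proof of Lemma \ref{lem:lagrorbit} (using that $A_L$ is $3$-torsion) forces $\div(D) \in \{1, 3\}$, with $D^2 \equiv -6 \pmod{18}$ whenever $\div(D) = 3$. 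This leaves a discrete list of orbits to rule in or out.

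For the ``if'' direction I would point to the two divisors already present in Example \ref{ex:M_S}. The exceptional divisor $\widetilde{\Sigma}$ of $\pi_v$ is effective and irreducible with $\widetilde{\Sigma}^2 = -6$; the equalities $(\widetilde{\Sigma}, \widetilde{B}) = 3$ and $(\widetilde{\Sigma}, \oH^2(S, \ZZ)) = 0$ force $\div(\widetilde{\Sigma}) = 3$, realising the divisibility-$3$ orbit by an honest prime exceptional divisor. The strict transform $\widetilde{B}$ of the non-locally-free locus is irreducible with $\widetilde{B}^2 = -2$ and $\div(\widetilde{B}) = 1$, realising the remaining orbit. Deformation invariance of the stably prime exceptional property then takes care of all vectors in these two orbits.

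The main obstacle lies in the converse. Assume a multiple of $D$ is stably prime exceptional. I would first deform the pair $(X, D)$ to a very general pair with the same lattice data, and then apply the Lehn--Pacienza result on the MMP for irreducible holomorphic symplectic manifolds (cited in the introduction) to deform further to a pair $(\widetilde{M}_v(S, H), D_0)$ with $v = 2w$, $w^2 = 2$, in such a way that $D_0$ supports a birational divisorial contraction. Two situations then arise: either the contraction is the symplectic resolution $\pi_v$ itself, in which case $D_0$ is proportional to $\widetilde{\Sigma}$ and the invariants are $D^2 = -6$, $\div(D) = 3$; or the contraction factors through $\pi_v$ as a divisorial contraction of the singular moduli space $M_v(S, H)$. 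In the latter case Theorem \ref{thm:mz1}(2) produces a Hodge class $s \in \oH^{\operatorname{even}}(S, \ZZ)$ with $s^2 = -2$ and $(s, v) = 0$; using the isometric embedding $v^\perp \hookrightarrow \oH^2(\widetilde{M}_v(S, H), \ZZ)$ of Theorem \ref{thm:og10prelim}, the class of the exceptional divisor has BBF square equal to $s^2 = -2$, forcing $D_0^2 = -2$ by primitivity. Combined with the Eichler orbit classification this exhausts the possibilities. The delicate step, and the one I expect to carry most of the technical weight, is the Lehn--Pacienza reduction guaranteeing that after deformation the contraction is genuinely realised on a moduli space of sheaves with the prescribed exceptional ray.
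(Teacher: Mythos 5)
Your ``if'' direction and the reduction to monodromy orbits agree with the paper: the two admissible orbits are realised by $\widetilde{B}$ and $\widetilde{\Sigma}$ on O'Grady's original resolution, and deformation invariance of the stably prime exceptional property does the rest. The problem is the ``only if'' direction, where you replace a one-line argument by heavy machinery and, in doing so, open a genuine gap. The paper's proof is simply this: if $aD$ is stably prime exceptional, Markman's theorem says the reflection $R_D(F)=F-2\frac{(D,F)}{D^2}D$ is an \emph{integral} monodromy operator, so $D^2$ divides $2\div(D)$; since the discriminant group is $3$-torsion, $\div(D)\in\{1,3\}$, and evenness of the lattice together with the congruence $D^2\equiv -6 \pmod{18}$ for divisibility $3$ (as in Lemma~\ref{lem:lagrorbit}) leaves exactly $D^2=-2$, $\div(D)=1$ and $D^2=-6$, $\div(D)=3$. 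No MMP and no Meachan--Zhang input is needed.

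Your alternative route fails at two points. First, the dichotomy ``either the contraction is $\pi_v$ or it factors through $\pi_v$'' is not available: the divisorial contraction of a prime exceptional divisor $aD_0$ with $(D_0,\widetilde{\Sigma})\neq 0$ is an isomorphism away from $aD_0$ and in particular does not contract $\widetilde{\Sigma}$, so it does not descend to $M_v(S,H)$. Making the relevant birational transformation visible downstairs requires contracting $\widetilde{\Sigma}$ as the first step of an MMP and controlling what remains --- this is exactly the content of Proposition~\ref{prop:murisotto} and needs the Lehn--Pacienza theorem; you defer this step rather than carry it out. Second, and more seriously, even granting the reduction, the inference ``$s^2=-2$ forces $D_0^2=-2$'' does not follow: for the candidate orbit $D_0^2=-24$, $\div(D_0)=3$ one can write $D_0=3E+\widetilde{\Sigma}$ with $E\in\widetilde{\Sigma}^\perp$ and $E^2=-2$, so a Meachan--Zhang class $s$ with $s^2=-2$ \emph{does} exist and your criterion is satisfied; ruling this orbit out would require identifying the class (including the $\widetilde{\Sigma}$-coefficient) of the contracted divisor upstairs, which you do not do. This is precisely the ``two walls coming together'' phenomenon of Section~\ref{sec:counter-counter}: divisors of square $-24$ and divisibility $3$ are wall divisors (Proposition~\ref{prop:wall}) but are not stably prime exceptional, and your argument as written cannot tell these two notions apart.
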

\begin{proof}
If $E$ is a stably prime exceptional divisor and $D$ is a primitive divisor such that $E=aD$, Markman \cite{mar_prime} proves that the reflection $R_E=R_D$ which sends a divisor $F$ to $F-2\frac{(D,F)}{D^2}D$ is a monodromy operator, and in particular it is integral. Therefore, $2div(D)/D^2$ is an integer. As $H^2(X,\mathbb{Z})\cong U^3\oplus E_8(-1)^2\oplus A_2(-1)$, $div(D)$ can be either $1$ or $3$. This in turn implies $D^2=-2$ in the first case and $D^2=-6$ in the second case. Being a stably prime exceptional divisor is a property which is invariant under the monodromy group by \cite[Section 6]{mark_tor}, and elements of square $-2$ form a single monodromy orbit by Lemma \ref{lem:eichler}, as do elements of square $-6$ and divisibility $3$. Therefore, to prove our claim we only need an example of a manifold $X$ with two prime exceptional divisors having degree $-2$ and degree $-6$, respectively.

Let $S$ be a projective K3 surface and $M_S$ the moduli space of semistable rank $2$ sheaves with trivial first Chern class and second Chern class of degree $4$. The locus $B$ parametrising non-locally free sheaves is a (Weil) divisor. Take $X=\widetilde{M}_S$ to be O'Grady's symplectic desingularisation; then the strict transform $\widetilde{B}$ of $B$ and the exceptional divisor $\widetilde{\Sigma}$ of the desingularisation are prime exceptional divisors with the right degrees and divisibilities, as written in Example \ref{ex:M_S}, by \cite{rapagnetta:og10}.

\end{proof}
\begin{thm}\label{thm:movable}
Let $X$ be a manifold of OG10 type. Then, the birational K\"ahler cone of $X$ is an open set inside one of the connected components of $$\mathcal{C}(X)\setminus \bigcup_{D^2=-2 \text{ or } (D^2=-6 \text{ and } div(D)=3)} D^\perp.$$
\end{thm}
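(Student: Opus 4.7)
The plan is to combine Markman's general structural description of the birational K\"ahler cone with our classification of stably prime exceptional divisors obtained in Proposition~\ref{prop:pex}.

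First, I will invoke \cite[Section~6]{mark_tor}, which for any irreducible holomorphic symplectic manifold $X$ identifies the birational K\"ahler cone $\mathcal{BK}_X$ as an open subset of $\mathcal{C}(X)$ whose boundary walls (inside $\mathcal{C}(X)$) are precisely the hyperplanes $E^\perp$ orthogonal to stably prime exceptional divisors $E$. Equivalently, $\mathcal{BK}_X$ lies inside the connected component of
\[
\mathcal{C}(X)\setminus\bigcup_{E \text{ stably prime exceptional}} E^\perp
\]
that contains $\mathcal{K}_X$. This step is completely general and uses no information specific to OG10 type; in particular it explains why $\mathcal{BK}_X$ is an open set inside one of the chambers cut out by the relevant orthogonal hyperplanes.

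Then I plug in the deformation-type-specific input coming from Proposition~\ref{prop:pex}: a primitive divisor $D\in\Pic(X)$ on a manifold of OG10 type admits a stably prime exceptional multiple if and only if $D^2=-2$, or $D^2=-6$ with $\div(D)=3$. Since $(aD)^\perp=D^\perp$ for every nonzero integer $a$, the union of hyperplanes appearing in Markman's description coincides with the union appearing in the statement of Theorem~\ref{thm:movable}. Substituting into the previous paragraph yields the theorem.

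The substance of the proof has already been carried out in Proposition~\ref{prop:pex}, so once the monodromy orbits of stably prime exceptional classes are identified the theorem is essentially automatic. The main potential obstacle is to rule out the existence of stably prime exceptional divisors of other numerical types, which would introduce additional walls not listed in the statement; but that is exactly what Proposition~\ref{prop:pex} does, via the constraint $2\div(D)/D^2\in\ZZ$ coming from Markman's integrality of the reflection $R_D$ combined with the fact that the discriminant group of $U^3\oplus E_8(-1)^2\oplus A_2(-1)$ forces $\div(D)\in\{1,3\}$.
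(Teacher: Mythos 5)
Your proof is correct and follows essentially the same route as the paper: Markman's identification of the birational K\"ahler cone as an open subset of the fundamental exceptional chamber, combined with the classification of stably prime exceptional divisors from Proposition~\ref{prop:pex}. The only cosmetic difference is the citation (the paper points to \cite[Section~5]{mark_tor} for this step), and your explicit remark that $(aD)^\perp=D^\perp$ is a welcome clarification the paper leaves implicit.
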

\begin{proof}
By \cite[Section 5]{mark_tor}, the b
irational K\"ahler cone is an open set in the fundamental exceptional chamber, which is a connected component of $$\mathcal{C}(X)\setminus \bigcup_{D \text{ stably prime exceptional }} D^\perp.$$
By Proposition \ref{prop:pex}, stably prime exceptional divisors are multiples of divisors of square $-2$ or of square $-6$ and divisibility $3$, therefore the claim follows. 
\end{proof}
In particular, the above Theorem readily implies that the two families of O'Grady tenfolds constructed by Laza, Saccà and Voisin in \cite{LSV} and in \cite{VoisinLSV} are not birational for a very general cubic fourfold $Y$, results that has been proven (with different methods) in \cite{Sacca:boh}. We refer to Section~\ref{sec:IJac} for the notation.
\begin{cor}
Let $V$ be a very general cubic fourfold (in the sense of Hassett) and let $\IJac(V)$ and $\IJac^t(V)$ be the (compactified and smooth) families of intermediate jacobians (resp.\ twisted intermediate jacobians). Then $\IJac(V)$ and $\IJac^t(V)$ are not birational.
\end{cor}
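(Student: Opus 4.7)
A birational equivalence of irreducible holomorphic symplectic manifolds induces a parallel-transport Hodge isometry of the second integral cohomology that preserves movable cones; by Theorem~\ref{monodromy}, such an isometry lies in $O^+(L)$, where $L := U^3\oplus E_8(-1)^2\oplus A_2(-1)$, and it identifies the two Picard sublattices of $L$. My plan is therefore to show that for a very general cubic fourfold $V$ the Picard lattices of $\IJac(V)$ and $\IJac^t(V)$ embed into $L$ in ways that do not lie in a common $O^+(L)$-orbit, so that no such monodromy isometry can exist.

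For a very general $V$ (outside every Hassett divisor) I expect both Picard lattices to have rank $2$. In each case I take as basis the primitive isotropic class of the Lagrangian fibration to $\PP^5$---call it $h$ for $\IJac(V)$ and $h'$ for $\IJac^t(V)$---together with a theta-type class $\Theta$ (respectively a twisted theta class $\Theta^t$) coming from the polarization of the generic intermediate Jacobian fibre. Lemma~\ref{lem:lagrorbit} gives $\div(h)=\div(h')=1$ and places these two primitive isotropic classes in a single $O^+(L)$-orbit, so $h$ and $h'$ alone cannot distinguish the two embeddings; any obstruction to birationality must come from the second generator.

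The core of the proof is then to compute the Beauville--Bogomolov square, the intersection with the fibration class, and most importantly the class in the discriminant group $A_L\cong\ZZ/3\ZZ$ of $\Theta$ and $\Theta^t$. The inputs are the explicit constructions of Laza--Sacc\`a--Voisin and of Voisin, combined with the lattice formulas of Theorem~\ref{thm:og10prelim} and the pullback identity of Remark~\ref{rmk:intersezione sopra e sotto}. I anticipate that the Voisin twist alters the class of $\Theta^t$ in $A_L$ relative to that of $\Theta$, so that the two rank-$2$ embeddings into $L$ have different invariants in the discriminant group. Eichler's criterion in the form of Lemma~\ref{lem:eichler_gen} then immediately separates them into distinct $\widetilde{SO}^+(L)$-orbits, and in fact distinct $O^+(L)$-orbits, yielding the non-birationality.

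The main obstacle I expect is the explicit lattice-theoretic identification of $\Theta^t$: the twist in the Voisin construction genuinely modifies the integral structure of $H^2(\IJac^t(V),\ZZ)$ inside $L$, and one must trace this modification carefully through the comparison with $H^2(\IJac(V),\ZZ)$. This computation is precisely what Section~\ref{sec:IJac} is devoted to; once it is in hand the rest of the argument is a formal application of Theorem~\ref{thm:movable}, Theorem~\ref{monodromy} and Eichler's criterion.
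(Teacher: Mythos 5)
Your overall strategy --- distinguishing the two rank-two Picard lattices inside $L$ so that no orientation-preserving Hodge isometry can identify them --- is viable in principle, but as written the proposal has a genuine gap: the decisive computation is only ``anticipated'', never performed, and the invariant you bet on is not the one that actually does the work. The paper's Lemma~\ref{prop:primitive_cohom} computes $P_V=\langle T_V,b_V\rangle\cong U$ and $P^t_V=\langle T^t_V,b^t_V\rangle\cong U(3)$, so the two Picard lattices are already non-isometric as abstract lattices (determinants $-1$ and $-9$); once the Gram matrices are known, no discriminant-group or Eichler argument is needed. Moreover, your specific expectation that the twisted theta class acquires a nontrivial image in $A_L\cong\ZZ/3\ZZ$ fails: since $A_L$ is $3$-torsion, a class of divisibility $3$ must have square $\equiv -6 \pmod{18}$ (this is exactly the computation in Lemma~\ref{lem:lagrorbit}), and $(T^t_V)^2=-18\not\equiv -6$, so $\div(T^t_V)=1$ and its discriminant class vanishes just as for $T_V$. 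The divisibility-$3$ element of $P^t_V$ is rather $T^t_V+2b^t_V$ (of square $-6$), and knowing that $\Pic(\IJac^t(V))$ equals $P^t_V$ on the nose, rather than an overlattice, requires the external input from \cite{PertusiCo} invoked in Section~\ref{sec:IJac}. None of this is supplied in your proposal, so it does not yet constitute a proof.

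For comparison, the paper's actual argument avoids computing the twisted lattice entirely. It uses only that $\Pic(\IJac(V))\cong U$ for very general $V$, that both manifolds carry lagrangian fibrations and hence each contributes a nef primitive isotropic class, and Voisin's result that the two are not birational \emph{as fibrations}; if they were birational, the hyperbolic plane $U=\langle e,f\rangle$ would therefore have to contain two distinct positive movable isotropic rays. But $e-f$ has square $-2$, so by Theorem~\ref{thm:movable} its orthogonal is a prime exceptional wall strictly separating $e$ from $f$, and only one of the two can be movable --- a contradiction. If you wish to salvage your route, replace the discriminant-class comparison by the direct observation that $U\not\cong U(3)$, after actually carrying out the computation of $P^t_V$ and of $\Pic(\IJac^t(V))$.
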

\begin{proof}
If $V$ is very general, then by \cite[Proposition~4.1]{ono} (see Lemma~\ref{prop:primitive_cohom}) $\Pic(\IJac(V))\cong U$, where $U$ is the unimodular hyperbolic plane. Both $\IJac(V)$ and $\IJac^t(V)$ have a natural structure of lagrangian fibrations over $\mathbb{P}^5$, therefore they have a nef square zero divisor in their Picard lattice by \cite{matsu_lagr}. For general $V$, Voisin proved that $\IJac(V)$ and $\IJac^t(V)$ are not birational as lagrangian fibrations, therefore if they were birational there would be two square zero movable divisors in $\Pic(\IJac(V))$. By fixing standard square zero generators of $\Pic(\IJac(V))=U=\langle e,f\rangle$, all square zero classes are $e,f,-e,-f$ and, up to a sign choice, positive ones are only $e$ and $f$. However, the class $e-f$ is negative on $e$ and positive on $f$, therefore by Theorem \ref{thm:movable} only one among $e$ and $f$ can be movable, which implies that the two manifolds are not birational.  
\end{proof}


\section{Examples of wall divisors}\label{sec:examples}
In this section we exhibit explicit examples of wall divisors. The following remark is fundamental for the computations. If $X$ is an irreducible holomorphic symplectic manifold and $\oH_2(X,\ZZ)$ is the homology group of curve classes, then Poincar\'e duality and the non-degeneratness of the Beauville--Bogomolov--Fujiki form give the embedding
$$\oH_2(X,\ZZ)\cong\oH^2(X,\ZZ)^*\hookrightarrow\oH^2(X,\QQ).$$
If $R\in\oH_2(X,\ZZ)$ is the class of an extremal ray of the Mori cone of curves, then by \cite[Lemma~1.4]{Mongardi:Cones} an integral generator $D$ of the line $\QQ R\subset\oH^2(X,\QQ)$ is the class of a wall divisor, and $D^\vee:=D/\div(D)=R$. The strategy is then to look for lines describing certain Mukai flops and write down their classes in the group $\oH^2(X,\QQ)$.

We refer to Example~\ref{ex:M_S} and Example~\ref{ex:torsion} for the notation and background.

\subsection{Example: the zero section}\label{example:zero section}
Let $S$ be a very general K3 surface of genus $2$, that is $\Pic(S)=\ZZ H$ with $H^2=2$. We work with the moduli space $M_{(0,2H,-4)}$ and its desingularisation $\widetilde{M}_{(0,2H,-4)}$. The generic point of $M_{(0,2H,-4)}$ is of the form $i_*L$, where $i\colon C\to S$ is the inclusion of a genus $5$ curve such that $C\in|2H|$ and $L$ is a degree $0$ line bundle on $C$. In particular, the generic fibre of the Fitting-support morphism $p\colon M_{(0,2H,-4)}\to|2H|$ is an abelian variety, so that there is a generic zero section $s\colon|2H|\dashrightarrow M_{(0,2H,-4)}$. We denote by $Z_S$ the closure in $\widetilde{M}_{(0,2H,-4)}$ of the image of $s$.

We want to take a general line $l\subset Z_S$ and compute its class in $\Pic(\widetilde{M}_{(0,2H,-4)})_{\QQ}=\langle a,b,\sigma\rangle$, where $a=(-1,H,0)$, $b=(0,0,1)$ are the generators of $v^\perp_{\operatorname{alg}}$, and $\sigma$ is the class of the exceptional divisor of the desingularisation. The line $l$ is an extremal ray of the Mori cone of $\widetilde{M}_{(0,2H,-4)}$, so that the divisors $D$ such that $l=D^\vee$ is a wall divisor.

We use the horizontal curve $l$ defined in \cite[Section~4.1.1]{ono}; the following result is shown there.
\begin{lem}[\protect{\cite[Remark~4.9, Lemma~4.10]{ono}}]
$l.a=-1$, $l.b=1$ and $l.\sigma=0$.
\end{lem}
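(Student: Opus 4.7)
The plan is to handle the three intersection numbers by two different methods: a stability argument for $l\cdot\sigma=0$ and a direct Mukai--Chern computation for $l\cdot a$ and $l\cdot b$. For the first, I would show that the zero section $Z_S$ is entirely contained in the stable locus of $M_{(0,2H,-4)}$, so that in the desingularisation $\widetilde M_v$ it is disjoint from the exceptional divisor $\widetilde\Sigma$. When $C\in|2H|$ is smooth irreducible the sheaf $i_*\cO_C$ is visibly stable. When $C=C_1\cup C_2$ is reducible with $C_i\in|H|$, the two natural sequences $0\to i_{k*}\cO_{C_k}(-C_j)\to i_*\cO_C\to i_{j*}\cO_{C_j}\to 0$, together with the ideal-sheaf subsheaves $\cO_C(-Z)$, exhaust the possible destabilising candidates; a direct Riemann--Roch check (using $H^2=2$ and $g(C_i)=2$) shows strict inequality of reduced Hilbert polynomials, so $i_*\cO_C$ is stable. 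Hence $Z_S\cap\Sigma_v=\emptyset$ and $l\cdot\sigma=0$.

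For $l\cdot a$ and $l\cdot b$ I would use the Mukai isomorphism $\theta_v\colon v^\perp\to\oH^2(M_v,\ZZ)$. The curve $l$ is realised by the family $\cO_\mathcal{C}$ on $S\times\PP^1$, where $\mathcal{C}\subset S\times\PP^1$ is the universal family of curves above a generic line in $|2H|$; a short K\"unneth argument gives $[\mathcal{C}]=p_S^*(2H)+p_{\PP^1}^*(h)$ with $h$ the point class on $\PP^1$. From the exact sequence $0\to\cO(-\mathcal{C})\to\cO\to\cO_\mathcal{C}\to0$ one computes $\operatorname{ch}(\cO_\mathcal{C})=1-e^{-[\mathcal{C}]}$ term by term (the series truncates at $[\mathcal{C}]^3$ since $\dim S\times\PP^1=3$), and then extracts the top-degree part of the product $\operatorname{ch}(\cO_\mathcal{C})\cdot p_S^*(w^\vee\sqrt{\operatorname{td}_S})$ and pushes forward to $\PP^1$ to obtain $l\cdot\theta_v(w)$. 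A cleaner geometric sanity check is available for $b$: the class $\theta_v(b)$ coincides, up to sign, with the pullback of the hyperplane class via the Fitting morphism $p\colon M_v\to|2H|$, and $l$ maps birationally to a line in $|2H|$, giving $l\cdot b=1$ directly. The analogous Chern-character computation for $w=a=(-1,H,0)$ yields $l\cdot a=-1$.

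The main obstacle is the bookkeeping of sign and normalisation conventions in the Mukai map, in particular the treatment of the Mukai dual $w^\vee$ and the factor $\sqrt{\operatorname{td}_S}$; the stability check at the reducible fibres is elementary but requires care to ensure that all possible subsheaves are accounted for.
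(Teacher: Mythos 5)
Your proof is correct and is essentially the computation the paper delegates entirely to the citation \cite[Remark~4.9, Lemma~4.10]{ono}: the paper offers no argument of its own here, and its proof of the parallel statement in the $\PP^3$-bundle example (Lemma~\ref{claim}) confirms that the intended method is exactly your combination of the Mukai-map/Grothendieck--Riemann--Roch computation for $\mathcal{F}=\cO_{\mathcal{C}}$ with $[\mathcal{C}]=p_S^*(2H)+p_{\PP^1}^*(h)$ (which indeed yields $l.a=-1$, $l.b=1$) and a stability check giving $l.\sigma=0$. The only caveat is that you should claim, and only need, that a \emph{general} line in $Z_S$ misses $\Sigma_v$ --- a general pencil in $|2H|$ avoids the non-reduced curves, and at the finitely many reducible reduced fibres the maximal destabilising candidate $i_{1*}\cO_{C_1}(-C_1\cap C_2)$ has $\chi=-3$ and slope $-3/2<-1$, so $i_*\cO_C$ is stable --- rather than that all of $Z_S$ lies in the stable locus, which is stronger than required.
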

As a consequnce we get that $l=a-3b$. In particular it is already integral, so that the associated wall divisor is $D=a-3b$. Notice that $D$ has degree $-4$ and divisibility $1$.

\subsection{Example: $\PP^5$}\label{example:lP^5}
Let $S$ be an elliptic K3 surface such that $\Pic(S)=\langle e,f\rangle$, where $e-f$ is the class of a section and $f$ is the class of a fibre; in particular $e^2=0=f^2$ and $(e,f)=1$. 
The class $H=e+3f$ is ample and generic (in the sense of Example~\ref{ex:M_S}); $\widetilde{M}_S(H)$ is the associated smooth O'Grady moduli space.

Consider the class $H_0=e+2f$; then $H_0$ is ample but not generic. In fact one can describe explicitly the singular locus of $M_S(H_0)$: a semistable sheaf $F$ is singular in $M_S(H_0)$ if either $F$ is strictly $H$-semistable or $F$ fits in a short exact sequence 
\begin{equation}\label{eqn:H_0}
0\longrightarrow L\longrightarrow F\longrightarrow L^\vee\longrightarrow0,
\end{equation}
where $L=e-2f$. This follows directly from the proof of \cite[Lemma~1.1.5]{O'Grady:OG10}. Notice that a sheaf $F$ fitting in a sequence like (\ref{eqn:H_0}) is locally free, and there is a $\PP^5$ of such extensions. In particular, blowing up the locus $\Sigma(H_0)$ of strictly $H$-semistable sheaves produces a symplectic variety $\widetilde{M}_S(H_0)$ (which is still singular).

As explained in \cite[Proposition~4.4 and Corollary~4.6]{MR}, any $H$-semistable sheaf remains $H_0$-semistable, so there is a regular contraction morphism $c\colon M_S(H)\to M_S(H_0)$. Moreover, this morphism lifts to a regular morphism between the corresponding blow-ups, i.e.\ there is a commutative diagram
\begin{equation*}
\xymatrix{
\widetilde{M}_S(H)\ar@{->}[r]^{\tilde{c}}\ar@{->}[d] & \widetilde{M}_S(H_0)\ar@{->}[d] \\
M_S(H)\ar@{->}[r]^{c}                                         &  M_S(H_0),
}
\end{equation*}
and $\tilde{c}$ is a flopping contraction. The extremal curve $R$ associated to the contraction $\tilde{c}$ is any line inside the $\PP^5$ of extensions of the form (\ref{eqn:H_0}). We claim that the divisor $D\in\Pic(\widetilde{M}_S(H))$ such that $D^\vee=R$ is the class $e-2f$, seen as a divisor class in $\Pic(\widetilde{M}_S(H))$ via the Mukai--Donaldson--Le Poitier morphism. In particular, $D=e-2f$ is a wall divisor of degree $-4$.

First of all, we notice that the contracted $\PP^5$ is contained in the locally free locus, so that the curve $R$ is disjoint from both the exceptional divisor $\widetilde{\Sigma}$ and the divisor $\widetilde{B}$ of non-locally free sheaves (cf.\ Example~\ref{ex:M_S}). In particular, we can suppose that $[R]\in\oH^2(S,\ZZ)\cong\langle\widetilde{B},\widetilde{\Sigma}\rangle^\perp$. On the other hand, such a class must be orthogonal to the ample class $H_0$ by construction. It follows that, up to a constant, $R=e-2f$, and we are done.


\subsection{Example: $\PP^3$-bundle}\label{example:lP^3-bundle}
In this section we work with the symplectic resolution $\widetilde{M}_{v}(S,H)$ of the moduli space $M_{v}(S,H)$, where $(S,H)$ is a polarised K3 surface of genus $2$ and $v=(0,2H,2)$. Moreover, we assume that $(S,H)$ is very general, that is $\Pic(S)=\ZZ H$.

We start by defining a closed subvariety $Y\subset M_{v}(S,H)$ of codimension $3$ such that $Y$ is isomorphic to a $\PP^3$-bundle over $\Hilb^2(S)$. Then we identify the class of the extremal ray corresponding to a line in a $\PP^3$-fibre of $Y$.

Let $w=(1,2H,3)$ and notice that $M_w(S,H)\cong\Hilb^2(S)$. The moduli space $M_w(S,H)$ parametrises sheaves of the form $I_{\xi}(2)$, where $\xi\in\Hilb^2(S)$ and $I_{\xi}$ is the corresponding sheaf of ideals. By a direct computation, we get that $h^0(I_\xi(2))=4$ and $h^1(I_\xi(2))=h^2(I_\xi(2))=0$. If $s\in\PP\oH^0(I_\xi(2))$ is a section, then the sheaf $F_s$ fitting in the short exact sequence
\begin{equation}\label{eqn:Y}
0\longrightarrow\cO_S\stackrel{s.}{\longrightarrow} I_\xi(2)\longrightarrow F_s\longrightarrow0
\end{equation}
belongs to the moduli space $M_v(S,H)$.
The subvariety $Y\subset M_v(S,H)$ formed by sheaves arising as in (\ref{eqn:Y}) is by construction a $\PP^3$-bundle over $M_w(S,H)$.

Let $\widetilde{M}_v(S,H)$ be the symplectic desingularisation of $M_v(S,H)$.
We want to determine the class in $\oH_2(\widetilde{M}_v(S,H),\ZZ)\subset\oH^2(\widetilde{M}_v(S,H),\QQ)$ of a line $R\cong\PP^1$ in the fibre of the $\PP^3$-bundle $Y$. Then a primitive integral generator $D$ of the line $\QQ R$ is a wall divisor.

From now on we fix a very general point $\xi\in\Hilb^2(S)$; the support of $\xi$ is composed by two reduced and disjoint points, i.e.\ $\operatorname{Supp}(\xi)=\{p,q\}$ and $p\neq q$.
If $s\in\PP\oH^0(I_\xi(2))$ is a section, we denote by $C_s$ the zero locus of $s$. With an abuse of notation, we identify $s$ with $C_s$ when no confusion arises.
\begin{cla}\label{claim:L}
There exists a pencil $L\subset\PP\oH^0(I_\xi(2))$ such that
$C_s$ is smooth for all but a finite number of sections $s_1,s_2,\cdots,s_k\in L$ and moreover:
\begin{itemize}
\item $C_{s_1}$, $C_{s_2}$ and $C_{s_3}$ have two smooth irreducible components; more precisely $C_{s_i}=C_{s_i,1}\cup C_{s_i,2}$ with $C_{s_i,j}\in|H|$ smooth.
\item $C_{s_i}$, $i=4,\cdots,k$, is irreducible.
\end{itemize}
\end{cla}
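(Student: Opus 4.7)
The plan is to reduce the statement to a claim about pencils of plane conics via the double cover $\varphi\colon S\to\PP^2$ defined by $|H|$. Since $\Pic(S)=\ZZ H$ and $H^2=2$, this is a double cover branched along a smooth sextic $B\subset\PP^2$, and pullback of sections induces an isomorphism $\varphi^*\colon|\cO_{\PP^2}(2)|\xrightarrow{\sim}|2H|$. For generic $\xi=\{p,q\}$, the images $\bar p:=\varphi(p)$ and $\bar q:=\varphi(q)$ are distinct points of $\PP^2\setminus B$, and $|I_\xi(2)|$ is identified with the $\PP^3$ of conics through $\bar p$ and $\bar q$. Smoothness (resp.\ reducibility) of $C_s$ is equivalent to smoothness of the corresponding conic together with transversality to $B$ (resp.\ the conic being a pair of distinct lines).

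Next I describe the reducible locus $R\subset|I_\xi(2)|\cong\PP^3$. Since $\Pic(S)=\ZZ H$, any reducible element of $|2H|$ splits as $C_1+C_2$ with $C_i\in|H|$, so $R$ is the locus of line pairs through $\{\bar p,\bar q\}$. This decomposes as $R=\Pi\cup Q$, where $\Pi\cong\PP^2$ is the plane of conics $\ell_0+\ell$ (with $\ell_0$ the unique line through $\bar p,\bar q$ and $\ell$ arbitrary), and $Q$ is the quadric surface image of the Segre map $\PP^1\times\PP^1\to\PP^3$ sending $(\ell_1,\ell_2)$, with $\ell_1$ in the pencil through $\bar p$ and $\ell_2$ in the pencil through $\bar q$, to their product. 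Consequently $R$ has degree $3$, and a generic line $L\subset\PP^3$ meets $\Pi$ transversely in one point $s_1$ and $Q$ transversely in two points $s_2,s_3$.

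The third step is to check that a generic $L$ satisfies all the required properties. For such $L$, the three reducible members correspond to pairs of distinct lines, and all six lines appearing can be chosen generically enough to avoid tangency with $B$, so that the preimages $\varphi^{-1}(\ell_\ast)$ are smooth curves in $|H|$, giving the required decomposition $C_{s_i}=C_{s_i,1}\cup C_{s_i,2}$ for $i=1,2,3$. All other members of $L$ are then irreducible conics; the locus of irreducible conics tangent to $B$ is a proper closed sublocus of $\PP^3$ (the restriction of the dual variety of $B$), which $L$ meets in the finitely many additional singular but irreducible members $s_4,\dots,s_k$. A standard Bertini argument, together with a direct check at the two base points $p,q$, guarantees smoothness of all remaining members.

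The main obstacle is to verify that the various open conditions needed in the third step — that the six lines appearing in the three reducible pairs are pairwise distinct and transverse to $B$, and that the remaining singular members of $L$ are all irreducible — are simultaneously achievable on a single pencil. Each condition is open and nonempty on the Grassmannian of lines in $\PP^3$, so the existence of a suitable $L$ reduces to a transversality/genericity statement, which is the technically most delicate (but routine) part of the argument.
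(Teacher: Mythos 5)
Your proposal is correct and follows essentially the same route as the paper: pass to the $\PP^3$ of plane conics through $d(\xi)$ via the degree-two cover, observe that the reducible locus is a cubic surface (which the paper also decomposes as a plane union a quadric, in Remark~\ref{rmk:S_1+S_2}) so a general pencil meets it in three points, and show that the tangency locus with the branch sextic is a proper closed subvariety met in finitely many points. The only difference is cosmetic: the paper proves finiteness of the tangency locus via an explicit incidence variety rather than by appealing to genericity on the Grassmannian.
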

\begin{proof}
We can think $\PP\oH^0(I_\xi(2))$ as the subset of $|2H|$ of curves passing through the points $p$ and $q$, support of $\xi$. Recall that the K3 surface $S$ is the double cover of $\PP^2$ ramified along a smooth sextic curve $\Gamma$; the linear system $|2H|$ is then isomorphic to the linear system $|\mathcal{O}_{\PP^2}(2)|$ of conics in $\PP^2$. A curve $C\in|2H|$ is smooth only if the corresponding conic in $|\mathcal{O}_{\PP^2}(2)|$ is smooth and is not tangent to the sextic $\Gamma$. Moreover, the locus of singular conics in $|\mathcal{O}_{\PP^2}(2)|$ is a degree $3$ hypersurface. 

Let $d\colon S\to\PP^2$ be the cover. By the generality of $\xi$, the image $d(\xi)$ consists of two disjoint points and $\PP\oH^2(I_\xi(2))$ is isomorphic to $\PP\oH^0(I_{d(\xi)}(2))\subset|\mathcal{O}_{\PP^2}(2)|$. The locus of singular conics in $\PP\oH^0(I_{d(\xi)}(2))$ is then again a hypersurface of degree $3$. It follows that a general pencil $L'$ in $\PP\oH^0(I_{d(\xi)}(2))$ contains three singular members: these correspond to the three reducible curves in $L=f^*L'$. 
Now, let $\mathcal{C}\subset\Gamma\times\PP\oH^0(I_{d(\xi)}(2))$ be the incidence variety of conics tangent to $\Gamma$ at some point. The projection $\mathcal{C}\to\PP\oH^0(I_{d(\xi)}(2))$ is finite on its image. The projection $\mathcal{C}\to\Gamma$ has generic fibre of dimension $1$. So the pencil $L'$ contains only finitely many conics tangent to $\Gamma$ and the claim follows.
\end{proof}

\begin{oss}\label{rmk:S_1+S_2}
The locus of reducible conics in $\PP\oH^0(I_{d(\xi)}(2))$ is the union $S_1\cup S_2$ of two surfaces. The surface $S_1$ parametrises reducible conics where both the points in the support of $d(\xi)$ are on one irreducible component. Since there exists a unique line passing through two points, it follows that $S_1$ is a linear surface. On the other hand, the surface $S_2$ parametrises reducible conics such that each irreducible component contains one point of the support of $d(\xi)$; $S_2$ has degree $2$.
\end{oss}

In order to define the line $R\subset M_v(S,H)$, we need to define a family $\mathcal{F}$ of sheaves on $L\times S$, flat over $L$, such that $\mathcal{F}_s\in M_v(S,H)$ for every $s\in L$.

Let $\pi_L$ and $\pi_S$ the projections from $L\times S$ to $L$ and $S$, respectively. 
The following result is an adaptation to our case of \cite[Section~2.2 and Appendix]{Perego}.
\begin{lem}
Let $\operatorname{P}=\PP\oH^0(I_\xi(2))$.
There exists an injective morphism 
$$\varphi\colon\pi_S^*\mathcal{O}_S\otimes\pi_{\operatorname{P}}^*\mathcal{O}_{\operatorname{P}}(-1)\longrightarrow\pi_S^*I_\xi(2)$$
defining a sheaf $\mathcal{F}':=\operatorname{coker}(\varphi)$ on $\operatorname{P}\times S$, flat over $\operatorname{P}$, such that $\mathcal{F}'_s$ is the sheaf in $M_v(S,H)$ corresponding to the section $s$ (cf.\ (\ref{eqn:Y})), for every $s\in \operatorname{P}$.
\end{lem}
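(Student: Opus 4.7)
The plan is to produce $\varphi$ from the tautological subbundle on $\operatorname{P}=\PP\oH^0(S,I_\xi(2))$ and then verify injectivity, flatness, and the fibre computation separately by standard base-change arguments.

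On $\operatorname{P}$ there is the tautological inclusion $\mathcal{O}_{\operatorname{P}}(-1) \hookrightarrow \oH^0(S,I_\xi(2)) \otimes_{\CC} \mathcal{O}_{\operatorname{P}}$. Combining its pullback under $\pi_{\operatorname{P}}$ with the pullback under $\pi_S$ of the global evaluation morphism $\oH^0(S,I_\xi(2)) \otimes_{\CC} \mathcal{O}_S \to I_\xi(2)$, I obtain the map
$$\varphi\colon \pi_S^* \mathcal{O}_S \otimes \pi_{\operatorname{P}}^* \mathcal{O}_{\operatorname{P}}(-1) \longrightarrow \pi_S^* I_\xi(2)$$
on $\operatorname{P}\times S$, whose restriction to the slice $\{s\} \times S$ is precisely multiplication by the section $s$. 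Since $I_\xi(2) \subset \mathcal{O}_S(2)$ is torsion-free and $\pi_S$ is flat, the target $\pi_S^*I_\xi(2)$ is itself torsion-free on the integral scheme $\operatorname{P}\times S$; hence any nonzero morphism from the line bundle $\pi_S^*\mathcal{O}_S \otimes \pi_{\operatorname{P}}^*\mathcal{O}_{\operatorname{P}}(-1)$ into it is automatically injective, and that $\varphi$ is nonzero is immediate from the fibre description.

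For flatness of $\mathcal{F}':=\operatorname{coker}(\varphi)$ over $\operatorname{P}$ I would appeal to the local criterion via the long exact sequence of $\operatorname{Tor}$. Both $\pi_{\operatorname{P}}^* \mathcal{O}_{\operatorname{P}}(-1)$ and $\pi_S^* I_\xi(2)$ are flat over $\operatorname{P}$: the first trivially so, and the second is, locally on $\operatorname{P}$, of the form $\mathcal{O}_{\operatorname{P}} \otimes_{\CC} I_\xi(2)$. The Tor sequence then shows that $\mathcal{F}'$ is flat over $\operatorname{P}$ if and only if, for every closed point $s\in\operatorname{P}$, the fibre map $\varphi_s$ is injective; as already noted, this is the injective multiplication $\mathcal{O}_S \xrightarrow{\cdot s} I_\xi(2)$. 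The identification $\mathcal{F}'_s = I_\xi(2)/s\cdot\mathcal{O}_S = F_s$ then follows from right-exactness of pullback, matching sequence (\ref{eqn:Y}).

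The only delicate point is checking injectivity of $\varphi_s$ at every $s\in\operatorname{P}$, including the boundary sections of Claim~\ref{claim:L} for which $C_s$ is reducible or non-reduced; but this reduces to the single observation that $I_\xi(2)$ is torsion-free, so I do not expect any real obstacle beyond carefully tracking the tautological data.
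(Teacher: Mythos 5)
Your proposal is correct and follows essentially the same route as the paper: the paper also constructs $\varphi$ from the tautological inclusion $\mathcal{O}_{\operatorname{P}}(-1)\hookrightarrow \oH^0(I_\xi(2))\otimes\mathcal{O}_{\operatorname{P}}$ (phrased via the universal property of $\PP$-bundles over a point, following Perego) composed with evaluation, and then deduces injectivity, flatness and the fibre identification from fibrewise injectivity of multiplication by $s$. You merely make explicit the local flatness criterion via $\operatorname{Tor}$ and the torsion-freeness argument that the paper leaves as an assertion.
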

\begin{proof} 
Viewing $\PP\oH^0(I_\xi(2))$ as a $\PP^3$-bundle over a point $\star=\operatorname{Spec}(\CC)$, we can write 
$$\operatorname{P}=\PP\oH^0(I_\xi(2))=\PP\left(o_*\mathcal{H}om(\mathcal{O}_S,I_\xi(2))\right),$$
where $o\colon S\to\star$ is the structure morphism of $S$. Denote by $p\colon \operatorname{P}\to\star$ the induced morphism. The universal property of $\PP$-bundles yields a canonical injective morphism $f\colon\mathcal{O}_{\operatorname{P}}(-1)\to p^*o_*\mathcal{H}om(\mathcal{O}_S,I_\xi(2))$. By the commutativity of the square 
\begin{equation*}
\xymatrix{
\operatorname{P}\times S\ar@{->}[r]^{q_{\operatorname{P}}}\ar@{->}[d]^{q_S} & Y\ar@{->}[d]^{p} \\
S\ar@{->}[r]^{o} & \star
}
\end{equation*}
we eventually get the isomorphism
$$ p^*o_*\mathcal{H}om(\mathcal{O}_S,I_\xi(2))\cong q_{\operatorname{P}*}\mathcal{H}om(q_S^*\mathcal{O}_S,q_S^*I_\xi(2)).$$
It follows that $f$ defines a section $\phi\in\oH^0(\operatorname{P}\times S, \mathcal{H}om(q_S^*\mathcal{O}_Sq_{\operatorname{P}}^*\mathcal{O}_{\operatorname{P}}(-1),q_S^*I_\xi(2)))$. 

By construction $\phi$ is defined fibrewise, and its restriction to each fibre is injective. It follows that $\phi$ is injective, so that $\mathcal{F}=\operatorname{coker}(\varphi)$ is flat over $\operatorname{P}$, and that $\mathcal{F}_s$ is the sheaf associated to the section $s\in\operatorname{P}$.
\end{proof}

Let $j\colon L\times S\to\operatorname{P}\times S$ be the inclusion and $\mathcal{F}:=j^*\mathcal{F}'$ the restriction. Then $\mathcal{F}$ is a sheaf on $L\times S$, flat over $L$.
The pair $(L,\mathcal{F})$ defines a line $R\subset M_v(S,H)$. Now, we want to understand the intersection of $R$ with $\Sigma$, the singular locus of $M_v(S,H)$.
\begin{lem}\label{lem:R.Sigma}
$R$ intersects $\Sigma$ in one point, corresponding to the reducible curve where $\xi$ is contained in only one irreducible component.
\end{lem}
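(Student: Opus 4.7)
The plan is to identify $\Sigma$ with the locus of strictly Gieseker semistable sheaves and to determine, for each reducible member of the pencil $L$, whether the corresponding $F_s$ lies in it.

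Since $v=2w$ with $w=(0,H,1)$ primitive, a sheaf $F\in M_v(S,H)$ belongs to $\Sigma$ if and only if it admits a nonzero quotient of Mukai vector $(0,H,1)$, that is, a degree~$2$ line bundle on a smooth curve in $|H|$ contained in the Fitting support $C_s$. For such a quotient to exist $C_s$ must be reducible, so only the three reducible members of Claim~\ref{claim:L} can contribute. By Remark~\ref{rmk:S_1+S_2} these split into the single point $L\cap S_1$, where $\xi$ lies on one irreducible component, and the two points of $L\cap S_2$, the split locus (here I am also using that a line in $\PP^3$ meets a linear plane in one point and a quadric in two points).

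For each reducible $C_s=C_1\cup C_2$, I compute the restriction $F_s|_{C_1}:=F_s\otimes_{\mathcal{O}_S}\mathcal{O}_{C_1}$ starting from the defining sequence $0\to\mathcal{O}_S\xrightarrow{s=s_1 s_2}I_\xi(2)\to F_s\to 0$. Since $s_1|_{C_1}=0$ the induced map $\mathcal{O}_{C_1}\to I_\xi(2)|_{C_1}$ vanishes, so $F_s|_{C_1}\cong I_\xi(2)\otimes\mathcal{O}_{C_1}$, and the Tor computation from the short exact sequence $0\to I_\xi\to\mathcal{O}_S\to\mathcal{O}_\xi\to 0$ yields
\[
0\to\mathcal{T}or^1_{\mathcal{O}_S}(\mathcal{O}_\xi,\mathcal{O}_{C_1})\to F_s|_{C_1}\to I_{\xi\cap C_1,C_1}(2H)\to 0,
\]
with torsion part of length $|\xi\cap C_1|$ and pure part a line bundle of degree $4-|\xi\cap C_1|$ on $C_1$. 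If $\xi\subset C_1$, the pure quotient $\mathcal{O}_{C_1}(2H-\xi)$ has degree~$2$ and Mukai vector $(0,H,1)$, and the composition $F_s\twoheadrightarrow F_s|_{C_1}\twoheadrightarrow\mathcal{O}_{C_1}(2H-\xi)$ is a slope-equality quotient, witnessing $F_s\in\Sigma$. In the split case $|\xi\cap C_i|=1$, so the pure parts of $F_s|_{C_i}$ have Mukai vector $(0,H,2)$ and the torsion parts have length~$1$; since any quotient of $F_s$ with $c_1=H$ is scheme-theoretically supported on one of $C_1,C_2$ and hence factors through the corresponding $F_s|_{C_i}$, a quotient of Mukai vector $(0,H,1)$ would require a torsion subsheaf of length~$2$ in $F_s|_{C_i}$, which does not exist. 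Therefore $F_s$ is Gieseker stable and avoids $\Sigma$.

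Combining the two steps, only the unique point $L\cap S_1$ contributes to $R\cap\Sigma$, proving the claim. The principal subtlety is the stability argument in the split case, where one must rule out \emph{all} destabilizing quotients, not just those visibly produced by the restrictions $F_s|_{C_i}$; this is handled by the scheme-theoretic support constraint for quotients with $c_1=H$ combined with the length bound on $\mathcal{T}or^1_{\mathcal{O}_S}(\mathcal{O}_\xi,\mathcal{O}_{C_i})$ coming from the exact sequence above.
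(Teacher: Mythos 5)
Your overall strategy is the same as the paper's: only the three reducible members of $L$ can meet $\Sigma$, the unique point of $L\cap S_1$ gives a strictly semistable sheaf via a quotient of Mukai vector $(0,H,1)$ supported on one component, and the two points of $L\cap S_2$ give stable sheaves because no such quotient exists. Your write-up is in one respect more careful than the paper's: you justify explicitly why a destabilizing quotient (necessarily pure of Mukai vector $(0,H,1)$, since $\Pic(S)=\ZZ H$ forces the reduced Hilbert polynomial computation you implicitly use) must be scheme-theoretically supported on a single component $C_i$ and hence factor through $F_s\otimes\mathcal{O}_{C_i}$, a step the paper leaves implicit when it reduces stability to the sheaves $G_i$.

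There is, however, one incorrect intermediate claim: the induced map $\mathcal{O}_{C_1}\to I_\xi(2)\otimes\mathcal{O}_{C_1}$ does \emph{not} vanish when $\xi\cap C_1\neq\emptyset$. A local computation at a point $p\in\xi\cap C_1$ (with $I_\xi=\mathfrak{m}=(x,y)$, $C_1=\{x=0\}$, $s=xg$ with $g$ a unit) shows that $s\otimes 1=\bar g\,(x\otimes 1)$ is a generator of the torsion summand of $\mathfrak{m}\otimes\mathcal{O}_{C_1}$; vanishing of the composite into $\mathcal{O}_{C_1}(2)$ only tells you the image lands in $\mathcal{T}or^1_{\mathcal{O}_S}(\mathcal{O}_\xi,\mathcal{O}_{C_1})$, and in fact the image is exactly that torsion subsheaf. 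Consequently $F_s\otimes\mathcal{O}_{C_1}\cong I_{\xi\cap C_1,C_1}(2H)$ is \emph{pure}, of Mukai vector $(0,H,2-|\xi\cap C_1|+1)$, rather than an extension of the pure part by a torsion sheaf of length $|\xi\cap C_1|$. This slip happens to be harmless for both of your conclusions: in the $S_1$ case the composition $F_s\twoheadrightarrow F_s\otimes\mathcal{O}_{C_1}\cong I_{\xi,C_1}(2H)$ is still a surjection onto a sheaf of Mukai vector $(0,H,1)$, so $F_s\in\Sigma$; and in the split case the correct statement that $F_s\otimes\mathcal{O}_{C_i}$ is pure of Mukai vector $(0,H,2)$ rules out a quotient of Mukai vector $(0,H,1)$ even more directly than your length-two-torsion argument (the kernel would be a length-one torsion subsheaf of a pure sheaf). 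So the proof is correct once this computation is repaired, but as written the description of $F_s\otimes\mathcal{O}_{C_i}$ and the stated reason for stability in the split case rest on a false identity.
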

\begin{proof}
Let $s\in \PP\oH^0(I_\xi(2))$ be a section, $C_s$ the associated curve and $F_s$ the associated sheaf. If $C_s$ is irreducible, then $F_s$ is stable. Assume then that $C_s=C_1\cup C_2$ is reducible. The stability of $F_s$ is checked by studying the sheaves $G_i:=(F_s|_{C_i})/\operatorname{tors}$. Since $F_s$ is defined by the short exact sequence (\ref{eqn:Y}), the sheaf $G_i$ is defined by
$$0\longrightarrow\cO_{S}\longrightarrow I_{\xi\cap C_i}(1)\longrightarrow G_i\longrightarrow0.$$
Using the same notation as in Remark~\ref{rmk:S_1+S_2}, if $s\in S_1$, then without loss of generality we can suppose that $\xi\subset C_2$. This implies that $I_{\xi\cap C_1}(1)=\cO_S(1)$ and we have a square 
\begin{equation*}
\xymatrix{
0\ar@{->}[r] & \mathcal{O}_S\ar@{->}[r]^{s.}\ar@{->}[d]^{id} & \mathcal{O}_S(1)\ar@{->}[r]\ar@{->}[d] & i_{1*}\mathcal{O}_{C_1}(2)\ar@{.>}[d]\ar@{->}[r] & 0 \\
0\ar@{->}[r] & \mathcal{O}_S\ar@{->}[r]^{s.} & I_\xi(2)\ar@{->}[r] & F_s\ar@{->}[r] & 0
}
\end{equation*}
with injective vertical arrows. Notice that the dotted arrow is induced by the commutativity of the first square, which is in turn induced by the inclusion $\oH^0(\mathcal{O}_S(1))\subset\oH^0(I_\xi(2))$ given by multiplication by the equation of $C_2$ (which is uniquely determined by $\xi$).

On the other hand, if $s\in S_2\setminus(S_1\cap S_2)$, then $I_{\xi\cap C_i}(1)=I_{p_i}(1)$ and one can directly check that neither of $G_i$ can destabilise $F_s$.

Finally, since $L$ is generic, the three reducible points $s_1$, $s_2$ and $s_3$ of Claim~\ref{claim:L} decompose as $s_1\in S_1$ and $s_2,s_3\in S_2$, from which the claim follows.
\end{proof}
Let now $\widetilde{R}$ be the strict transform of $R$ in $\widetilde{M}_v$, and $\pi\colon\widetilde{M}_v\to M_v$ the desingularisation morphism. We want to determine the coefficients of $\widetilde{R}$ in 
$$\Pic(\widetilde{M}_v)_{\QQ}=\operatorname{span}_{\QQ}\{a,b,\sigma\},$$ 
where $a=(2,H,0)$ and $b=(0,0,1)$ are the generators of $v^\perp_{\operatorname{alg}}$, and $\sigma$ is the class of the exceptional divisors of $\pi$.
\begin{lem}\label{claim}
$\widetilde{R}.a=2$ and $\widetilde{R}.b=1$.
\end{lem}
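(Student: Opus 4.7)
The plan is to reduce the claim to two intersection computations on $M_v$ via Remark~\ref{rmk:intersezione sopra e sotto}, and then evaluate them using the Donaldson--Mukai morphism $\theta_v\colon v^\perp_{\operatorname{alg}}\to\Pic(M_v)$ applied to the family $\mathcal{F}$ on $L\times S$. First I would observe that the induced morphism $f\colon L\to M_v$ is birational onto $R$: sections $s\in L$ corresponding to distinct curves $C_s\subset S$ yield sheaves with distinct Fitting supports, hence $f$ is generically injective. Using the projection formula in the form of Remark~\ref{rmk:intersezione sopra e sotto}, for any $w\in v^\perp_{\operatorname{alg}}$ one then has
$$\widetilde{R}.\pi_v^*\theta_v(w) \;=\; R.\theta_v(w) \;=\; \deg_L f^*\theta_v(w),$$
and by the defining formula for $\theta_v$, $f^*\theta_v(w) = \pi_{L*}\bigl(v(\mathcal{F})\cdot \pi_S^*w^\vee\bigr)$.

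Next I would compute $v(\mathcal{F})$. Letting $\omega$ denote the class of a point on $S$ and $\ell$ the class of a point on $L\cong\PP^1$, the defining short exact sequence together with $ch(I_\xi(2))=1+2H+2\omega$ yields $ch(\mathcal{F})=2H+2\omega+\ell$. Multiplying by $\pi_S^*\sqrt{td_S}=1+\omega$ and using $H\omega=\omega^2=0$, one obtains
$$v(\mathcal{F}) \;=\; 2H+2\omega+\ell+\ell\omega.$$
For $w=a=(2,H,0)$, hence $a^\vee=(2,-H,0)$, expanding $v(\mathcal{F})\cdot(2-H)$ and using $H^2=2\omega$, the only real-degree-$6$ contribution surviving the pushforward is $2\ell\omega$, giving $f^*\theta_v(a)=2[\operatorname{pt}]$, of degree $2$. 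For $w=b=(0,0,1)$, hence $b^\vee=\omega$, the same recipe isolates $\ell\omega$, giving $f^*\theta_v(b)=[\operatorname{pt}]$, of degree $1$. Combining yields $\widetilde{R}.a=2$ and $\widetilde{R}.b=1$.

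The delicate part will be bookkeeping rather than anything conceptual: pinning down the correct sign and normalization for $\theta_v$ (the Mukai involution convention $(r,c,s)\mapsto(r,-c,s)$, absence of a similitude factor since $\mathcal{F}$ is a genuine universal family on $L\times S$, and matching the labels $a=(2,H,0)$, $b=(0,0,1)$ in $v^\perp_{\operatorname{alg}}$ with the corresponding divisor classes on $\widetilde{M}_v$), and keeping track of all terms in the Chern character on $L\times S$ so that none of the mixed terms like $\ell\omega$ are accidentally dropped. Once those conventions are fixed, the computation is a purely mechanical expansion.
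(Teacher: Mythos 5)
Your proposal is correct and follows essentially the same route as the paper: the paper also reduces to $R.a$ and $R.b$ via Remark~\ref{rmk:intersezione sopra e sotto} and then invokes the Le Potier/Donaldson determinant line bundle together with Grothendieck--Riemann--Roch (citing \cite[Theorem~8.1.5]{HL:ModuliSpaces} and \cite[Lemma~4.10]{ono}), which is precisely the computation you carry out explicitly with $v(\mathcal{F})=2H+2\omega+\ell+\ell\omega$. Your expansion and the resulting degrees $2$ and $1$ check out (and the sign/similitude caveats you flag are indeed the only conventions to pin down, with $\rho=1$ here and the twist ambiguity harmless since $a,b\in v^\perp$).
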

\begin{proof}
By Remark~\ref{rmk:intersezione sopra e sotto}, $\widetilde{R}.a=R.a$ and $\widetilde{R}.b=R.b$. 
The claim follows then from \cite[Theorem~8.1.5]{HL:ModuliSpaces} and the Grothendieck--Riemann--Roch formula, as explained in \cite[Lemma~4.10]{ono}.
\end{proof}
Together with Lemma~\ref{lem:R.Sigma}, this shows that
$$ \widetilde{R}=-\frac{1}{2}a-\frac{3}{2}b-\frac{1}{6}\sigma=-\frac{1}{2}(a+3b+\sigma)+\frac{1}{3}\sigma.$$
If we put $x=-\frac{1}{2}(a+3b+\sigma)$, then we notice that $x$ is integral and that $x^2=-4$. It follows that the divisor 
$$D:=3x+\sigma$$
is a wall divisor such that $D^\vee=\widetilde{R}$. Notice that $D^2=-24$ and $\div(D)=3$.

\begin{oss}
The projection of $D$ inside $\Pic(M_v)$ is $-a-3b$, which has square $-10$ and divisibility $2$, giving an example of \cite[Theorem~5.3, item (SC)]{Meach_Zhang}.
\end{oss}

\begin{oss}\label{rmk:lP^5 deformato}
There is a brational isomorphism 
$$ \widetilde{M}_{(0,2H,2)}\cong\widetilde{M}_{(0,2H,-2)}\dashrightarrow\widetilde{M}_{(0,2H,-4)}$$
induced by the unique $\mathfrak{g}^1_2$ on the smooth curves in $|2H|$ (see Example~\ref{example:hyperelliptic}). The first isomorphism is the one given by taking the tensor with $-H$. This birational morphism induces a morphism on the corresponding Picard groups
$$ \varphi\colon\Pic(\widetilde{M}_{(0,2H,2)})\longrightarrow\Pic(\widetilde{M}_{(0,2H,-4)})$$
that has been written down in coordinates in \cite[Section~4.1.2, Section~4.1.3]{ono}. It is easy then to see that $\varphi(x)$ coincides with the class of the wall divisor of Example~\ref{example:zero section}. Notice that $\varphi$ is a parallel transport operator (since it is induced by a birational isomorphism), therefore this implies that the class $x$ corresponds to the class of a wall divisor that is deformation of the zero section in $\widetilde{M}_{(0,2H,-4)}$.
\end{oss}


\subsection{Example: the O'Grady birational morphism}
Let $S$ be a projective K3 surface with a polarisation $H$ of degree $2$. We consider the moduli space $M_{(0,2H,2)}$, whose generic member is of the form $i_*L$, where $i\colon C\to S$ is the closed embedding of a smooth curve $C\in|2H|$ and $L$ is a degree $6$ line bundle on $C$.

O'Grady in \cite[Proposition~4.1.2]{O'Grady:OG10} defines a birational morphism 
$$\phi\colon M_{(0,2H,2)}\dashrightarrow M_{(2,2H,0)}$$
defined in the following way.
First of all, notice that if $C$ is smooth and $L$ is a line bundle of degree $6$ on $C$, then $h^0(i_*L)\geq2$. O'Grady defines then the open subset $\mathcal{J}^0\subset M_{(0,2H,2)}$ consisting of sheaves of the form $i_*L$, where $C$ is smooth and $L$ is a globally generated line bundle of degree $6$ such that $h^0(L)=2$. He defines $\phi(i_*L)$ as the dual of the kernel of the surjection $\oH^0(L)\otimes\cO_S\to i_*L$. As remarked in the proof of \cite[Lemma~4.20]{ono}, this birational morphism coincides with the birational morphism induced by the Fourier--Mukai transform with kernel the ideal sheaf of the diagonal in the product $S\times S$.

We denote by $\widetilde{\phi}\colon\widetilde{M}_{(0,2H,2)}\dashrightarrow\widetilde{M}_{(2,2H,0)}$ the birational morphism induced on the symplectic desingularisations.

The indeterminacy locus of $\phi$ (and of $\widetilde{\phi}$) is generically identified with the relative Brill--Noether locus $W=\mathcal{W}^6_2(|2H|)$ of line bundles $L$ of degree $6$ such that $h^0(L)=3$. Notice that $W$ has dimension $7$.
On the other hand, one can easily check that the generic member of the $\PP^3$-bundle $Y\subset M_{(0,2H,2)}$, defined before in Example~\ref{example:lP^3-bundle}, is of the form $i_*L$ with $h^0(L)=3$. It follows that $W=Y$ and so that $\widetilde{\phi}$ is the generalised Mukai flop around a $\PP^3$-bundle.


\subsection{Example: $\PP^3$-bundles over the locus of non-reduced curves}\label{subsec:example -24}
In this example we work with the symplectic resolution $\widetilde{M}_{(0,2H,-4)}(S,H)$ of the moduli space $M_{(0,2H,-4)}(S,H)$, where $(S,H)$ is a polarised K3 surface of genus $2$; moreover, we assume that $(S,H)$ is very general, that is $\Pic(S)=\ZZ H$. The moduli space $M_{(0,2H,-4)}$ comes with a lagrangian fibration structure $p\colon M_{(0,2H,-4)}\to |2H|$ that associates to each sheaf its Fitting support. Let $\Delta\subset|2H|$ be the locus of non-reduced curves, that is any curve in $\Delta$ is of the form $C=2C'$, where $C'\in|H|$.

Now, let us consider the restriction 
$$ p\colon M_\Delta\longrightarrow\Delta,$$
where $M_\Delta\subset M_{(0,2H,-4)}$ is the sublocus of sheaves whose Fitting support is not reduced. It is known (see for example \cite[Section~3.7]{DeCataldoRapagnetta:HodgeNumbersOG10}) that $M_\Delta$ has two irreducible components, denoted $M_1$ and $M_2$. The component $M_1$ parametrises sheaves whose schematic support is the reduced curve: these sheaves are of the form $i_*G$, where $i\colon C'\to S$ and $G$ is a rank $2$ vector bundle of degree $-2$. The component $M_2$ contains an open subset $M_2^0:=M_2\setminus(M_2\cap M_1)$ parametrising sheaves whose schematic support is the non-reduced curve itself. We recall that $M_2^0$ consists of stable sheaves, that is it does not intersect the singular locus $\Sigma$ of $M_{(0,2H,-4)}$.

Both the components $M_1$ and $M_2$ have the structure of a $\PP^3$-bundle over a smooth moduli space of dimension $4$. Let us recall these structures.

If $F=i_*G\in M_1$, then $i_*\det G\in M_{(0,H,-3)}$; this gives a morphism 
$$m_1\colon M_1\to M_{(0,H,-3)},$$ whose fibres are $\PP^3$ by \cite[Theorem~2]{NarashimanRamanan:ModuliOfVectorBundles} (cf.\ \cite[Proposition~3.7.4]{DeCataldoRapagnetta:HodgeNumbersOG10}). 

Now a stable sheaf $F$ supported on a non-reduced curve $C=2C'$ fits in a short exact sequence (\cite[Lemma~3.3.1]{Mozgovyy})
$$0\longrightarrow E\otimes I\longrightarrow F\longrightarrow E\longrightarrow0,$$
where $I$ is the ideal of $C'$ in $C$ and $E$ is the restriction of $L$ to $C'$. Moreover, $E$ is a line bundle of degree $0$ on $C'$ and $I$ has degree $-2$ on $C'$. There exists a well defined map $m\colon M_2^0\to M_{(0,H,-1)}$ that sends $F$ to $E$ (\cite[Lemma~3.3.3]{Mozgovyy}). 
There is a short exact sequence (\cite[Corollary~3.2.2]{Mozgovyy})
$$0\to\operatorname{Ext}^1_{\cO_{C'}}(E,E\otimes I)\to\operatorname{Ext}^1_{\cO_C}(E,E\otimes I)\to\operatorname{End}_{\cO_{C'}}(E\otimes I)\to0,$$
so that the fibre of $m$ over a point $E\in M_{(0,H,-1)}$ is identified with the affine space
$$\PP\operatorname{Ext}^1_{\cO_C}(E,E\otimes I)\setminus\PP\operatorname{Ext}^1_{\cO_{C'}}(E,E\otimes I)\cong\operatorname{Ext}^1_{\cO_{C'}}(E,E\otimes I)\cong\oH^1(I)=\CC^3.$$
It follows that $m\colon M_2^0\to M_{(0,H,-1)}$ is a $\CC^3$-bundle. 
Moreover, if $F\in\PP\operatorname{Ext}^1_{\cO_{C}}(E,E\otimes I)$, then $F\in M_2$, so that the natural extension 
$m_2\colon M_2\to M_{(0,H,-1)}$ is a $\PP^3$-bundle whose fibre over $E$ is identified with the projective space $\PP\operatorname{Ext}^1_{\cO_C}(E,E\otimes I)$. 

In particular we see that these two $\PP^3$-bundles intersect along each fibre in a $\PP^2$. (More precisely, we can identify this $\PP^2$ with the locus of extensions $\PP\operatorname{Ext}^1_{\cO_{C'}}(E,E\otimes I)$.)

Finally, we recall the following fact: if $L\in M_{(0,H,-3)}$, then the intersection of $m_1^{-1}(L)$ with the singular locus is isomorphic to the Kummer surface associated to jacobian of the genus $2$ curve, support of $L$ (\cite{NarashimanRamanan:ModuliOfVectorBundles}).

Passing to the symplectic resolution $\widetilde{M}_{(0,2H,-4)}$, we denote by $\widetilde{M_1}$ and $\widetilde{M_2}$ the corresponding (generic) $\PP^3$-bundles.

Our task is to determine the wall divisors associated to these two $\PP^3$-bundles. Since a wall divisor is dual to the extremal ray of the Mori cone governing the associated birational transformation, we see that in fact there is just one wall divisor, and that these two $\PP^3$-bundles must be flopped together in order to get a K\"ahler Mukai flop. In fact, the curve class associated to the Mukai flop along one of the two $\PP^3$-bundles is just the class of a line in a fibre. Since they intersect along each fibre in a $\PP^2$, it follows that such curve class must be the same.

We are now going to determine this curve class and the corresponding wall divisor. Let $R$ be a line inside a generic fibre of $M_1$; as usual, we denote by $\widetilde{R}$ the strict transform of $R$ in $\widetilde{M}_{(0,2H,-4)}$ and by $\pi\colon\widetilde{M}_{(0,2H,-4)}\to M_{(0,2H,-4)}$ the desingularisation morphism. We will determine the coefficients of $\widetilde{R}$ in $\Pic(\widetilde{M}_{(0,2H,-4)})_{\QQ}=\operatorname{span}_{\QQ}\{a,b,\sigma\}$, where $a=(-1,H,0)$ and $b=(0,0,1)$ are the generators of $v^\perp_{\operatorname{alg}}$, and $\sigma$ is the class of the exceptional divisors of $\pi$.

Since we are only interested in the class of $\widetilde{R}$, we are allowed to pick any representative in its algebraic equivalence class. For the next result, it is more convenient to think of $R$ as a line in $M_2$.
\begin{lem}
$\widetilde{R}.a=1$ and $\widetilde{R}.b=0$.
\end{lem}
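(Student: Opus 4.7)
Both intersection numbers will be computed on the singular moduli space $M:=M_{(0,2H,-4)}(S,H)$ via Remark~\ref{rmk:intersezione sopra e sotto} (which converts $\widetilde R\cdot\pi_v^*D$ into $R\cdot D$ for any $D\in\operatorname{Pic}(M)$) and the Mukai--Donaldson--Le Potier morphism $\theta_v\colon v^\perp_{\mathrm{alg}}\to\operatorname{Pic}(M)$.

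The vanishing $\widetilde{R}\cdot b=0$ is essentially formal. The class $b=(0,0,1)$ maps under $\theta_v$ to a positive multiple of the pullback of the hyperplane class along the Fitting-support morphism $p\colon M\to|2H|$, as is standard for moduli of pure-dimension-one sheaves on a surface (see \cite[Chapter~8]{HL:ModuliSpaces}). But every sheaf in $R$ has Fitting support equal to the non-reduced curve $2C'$, so $p(R)$ is a single point and $R\cdot b=0$.

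For $\widetilde{R}\cdot a=1$ I realize $R$ explicitly by a flat family and apply Grothendieck--Riemann--Roch, following the scheme of \cite[Theorem~8.1.5]{HL:ModuliSpaces} and \cite[Lemma~4.10]{ono}. Take $R$ to be a line inside $M_2^0$, sitting inside the $\mathbb{P}^3=\mathbb{P}\operatorname{Ext}^1_{\mathcal{O}_C}(E,E\otimes I)$ fiber of $m_2$ over a general point $E$. The universal extension on $\mathbb{P}^3\times C$, restricted and pushed forward to $R\times S$, gives a flat family $\mathcal{F}$ of Mukai vector $v$ that defines $R\subset M$ and fits in
$$0\longrightarrow p_S^*i_*(E\otimes I)\otimes p_R^*\mathcal{O}_R(-1)\longrightarrow\mathcal{F}\longrightarrow p_S^*i_*E\longrightarrow 0.$$
The part of $\mathrm{ch}(\mathcal{F})$ pulled back from $S$ contributes nothing to the $H^2(R)$-component of $p_{R*}\bigl(\mathrm{ch}(\mathcal{F})\cdot p_S^*(a^\vee\cdot\sqrt{\mathrm{td}_S})\bigr)$, while the extension contribution, proportional to $h:=p_R^*c_1(\mathcal{O}_R(1))$ and controlled by $\mathrm{ch}(i_*(E\otimes I))=(0,H,-3)$, produces a single surviving top-degree class which pushes forward to a class of degree $\pm 1$ on $R\cong\mathbb{P}^1$.

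The only real obstacle is the bookkeeping with sign conventions for $\theta_v$ and for the Mukai dual $a^\vee$; once these are fixed consistently with those of \cite[Lemma~4.10]{ono}, the numerical outcome $\widetilde{R}\cdot a=1$ follows immediately from the Chern-character manipulation above.
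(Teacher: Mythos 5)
Your proposal is correct and follows essentially the same route as the paper: reduce to the singular moduli space via Remark~\ref{rmk:intersezione sopra e sotto}, realise $R$ as a line in a fibre of $m_2$ carrying the universal extension of $i_*E$ by $i_*(E\otimes I)\boxtimes\mathcal{O}_R(-1)$, and extract the intersection numbers by Grothendieck--Riemann--Roch as in Lemma~\ref{claim}; your shortcut for $\widetilde{R}\cdot b=0$ via the Fitting fibration is exactly the paper's ``alternative'' argument, which in fact pushes it further to recover $\widetilde{R}\cdot a=1$ as well from primitivity and positivity of the class of $R$ in $b^{\perp}\cap\operatorname{Pic}_{\QQ}$.
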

\begin{proof}
By Remark~\ref{rmk:intersezione sopra e sotto}, the projection formula reduces the problem to compute $R.a$ and $R.b$. 
Taking $R\in M_2$, we can give it a modular description using a universal family of extensions of $\PP\operatorname{Ext}^1_{\cO_S}(E,E\otimes I)$, and restrict it to $R$. The Grothendieck--Riemann--Roch formula yields the claimed intersections, $R.a=1$ and $R.b=0$, as in the proof of Lemma~\ref{claim}.

Alternatively, since $R$ is contained in a fibre of the Fitting map $p\colon M_{(0,2H,-4)}\to|2H|$, it must be a multiple of $b=p^*\cO(1)$; and because both $R$ and $b$ are primitive and positive, the claim follows.
\end{proof}
Finally, we need to determine the intersection of $R$ with the singular locus $\Sigma$.
\begin{lem}
$R$ intersects $\Sigma$ in four points with multiplicity $1$.
\end{lem}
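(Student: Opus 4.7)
The plan is to reduce the intersection $R\cap\Sigma$ to a classical intersection problem on a Kummer surface, using the Narasimhan--Ramanan description of the generic fibre of $m_1\colon M_1\to M_{(0,H,-3)}$ recalled in the excerpt.

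First I would identify set-theoretically what $\Sigma\cap m_1^{-1}(L)$ is. Pushing forward by the closed embedding $i\colon C'\hookrightarrow S$ is exact and preserves (semi)stability with respect to $H$, so a point $F=i_*G\in M_1$ with $G$ a rank $2$ vector bundle of degree $-2$ and $\det G\cong L$ is strictly semistable precisely when $G$ is strictly semistable on the genus $2$ curve $C'$, i.e.\ S-equivalent to a split bundle $L_1\oplus L_2$ with $L_1\otimes L_2\cong L$. Combining this with the quoted result of Narasimhan--Ramanan \cite{NarashimanRamanan:ModuliOfVectorBundles} (and the sentence just before the statement), the scheme-theoretic intersection $\Sigma\cap m_1^{-1}(L)$ is identified with the Kummer surface $\operatorname{Kum}(J(C'))$ of the Jacobian of $C'$, sitting inside the fibre $m_1^{-1}(L)\cong\mathbb{P}^3$.

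The key point I would then invoke is that this embedding $\operatorname{Kum}(J(C'))\hookrightarrow\mathbb{P}^3$ is the classical one, realising the Kummer surface as a \emph{quartic} hypersurface in $\mathbb{P}^3$: this is the original theorem of Narasimhan--Ramanan, since the linear system cutting out the Kummer surface inside their $\mathbb{P}^3$ is the complete linear system $|2\Theta|$ on the Jacobian, whose image is the Kummer quartic. Therefore, for a sufficiently generic choice of $L\in M_{(0,H,-3)}$ (which we are free to make, since we are only interested in the numerical class of $\widetilde{R}$) and a sufficiently general line $R\subset m_1^{-1}(L)\cong\mathbb{P}^3$, Bertini's theorem guarantees that $R$ meets the quartic Kummer surface transversally. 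Hence the set-theoretic intersection $R\cap\Sigma$ consists of exactly $4$ reduced points.

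The only mild subtlety is checking that the generic fibre of $m_1$ meets $\Sigma$ as prescribed and that a general line in such a fibre avoids the singular points of the Kummer quartic (the $16$ nodes) and is not tangent to the smooth locus; both are open conditions which are non-empty, so generic choice of $L$ and of $R$ achieves them simultaneously. The main technical point is really just to be sure that the $\mathbb{P}^3$-bundle structure on $M_1$ restricts on each fibre to the Narasimhan--Ramanan $\mathbb{P}^3=|2\Theta|^\vee$; this follows because $m_1$ is defined fibrewise by the determinant morphism, whose fibres are by definition the moduli space of semistable bundles with fixed determinant on $C'$, so the identification with the classical Kummer quartic embedding is automatic. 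Once transversality is established, the multiplicity of each intersection point is $1$ and the count is $\deg(R\cdot\operatorname{Kum})=4$, as claimed.
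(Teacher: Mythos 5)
Your proof is correct and follows essentially the same route as the paper: identify $\Sigma\cap m_1^{-1}(L)$ with the Narasimhan--Ramanan Kummer surface, note that it is a quartic in the fibre $\mathbb{P}^3$, and conclude by genericity that a general line meets it transversally in four reduced points. The paper's own proof is just a compressed version of this argument, so no further comment is needed.
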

\begin{proof}
We have already remarked that $m_1^{-1}(L)\cap\Sigma$ is a Kummer surface. In particular it is a quartic surface, so that choosing $R$ generically enough it meets this surface in four smooth points.
\end{proof}
As a corollary, we get that
$$\widetilde{R}=b-\frac{2}{3}\sigma\in\Pic(\widetilde{M}_{(0,2H,-4)}),$$
and the associated wall divisor is $D=3\widetilde{R}=3b-2\sigma$, which has square $-24$ and divisibility $3$.


\subsection{Example: the hyperelliptic birational map}\label{example:hyperelliptic}
We keep the same notations as in the previous Section~\ref{subsec:example -24}. So, in particular, we work with the moduli space $M_{(0,2H,-4)}$ and we denote by $p\colon M_{(0,2H,-4)}\to|2H|$ the Fitting--support morphism. Since any smooth curve $C$ in the linear system $|2H|$ is hyperelliptic, there exists a unique $\mathfrak{g}^1_2(C)$. Tensoring with this degree $2$ line bundle defines a birational map
\begin{equation}
\varphi\colon M_{(0,2H,-4)}\dashrightarrow M_{(0,2H,-2)}.
\end{equation}
We denote by $\tilde{\varphi}\colon\widetilde{M}_{(0,2H,-4)}\dashrightarrow\widetilde{M}_{(0,2H,-2)}$ the birational map between the respective symplectic desingularisations.

These birational maps have already been considered in \cite[Section 4.1]{ono}, where it is shown that $\varphi$ does not preserve the singular loci of the respective moduli spaces.

In this section we want to understand the wall divisor associated to $\tilde{\varphi}$. The first step consists in understanding the indeterminacy locus of $\varphi$. 

It is clear that $\varphi$ is defined fibrewise, so it is enough to understand the indeterminacy locus on each fibre.
\begin{prop}
\begin{enumerate}
\item If $C$ is irreducible, then $\varphi$ is well defined;
\item If $C$ is reduced, but not irreducible then $\varphi$ is well defined only on stable sheaves;
\item If $C$ is not reduced, then $\varphi$ is not defined.
\end{enumerate}
\end{prop}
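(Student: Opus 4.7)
The plan is to analyse the indeterminacy of $\varphi$ fibrewise over $|2H|$, using the identification of $|2H|$ with the linear system $|\cO_{\PP^2}(2)|$ of plane conics via the double cover $d\colon S\to\PP^2$. Writing $C=d^{-1}(Q)$ for a plane conic $Q$, the three cases of the proposition correspond respectively to $Q$ smooth, $Q$ a union of two distinct lines, and $Q$ a double line. The first step is to pin down an intrinsic description of $\varphi$: on the open locus of smooth fibres one has $\varphi(F)=F\otimes_{\cO_C}\mathfrak{g}^1_2(C)$, where $\mathfrak{g}^1_2(C):=(d|_C)^\ast\cO_{\PP^1}(1)$ is the pull-back of the degree-one bundle along the 2-to-1 map $C\to Q\cong\PP^1$, which increases the degree on $C$ by $2$ and thus produces the required Mukai vector shift from $(0,2H,-4)$ to $(0,2H,-2)$.

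For case~(1), irreducibility of $C$ forces $Q$ to be irreducible, hence smooth (reducible plane conics are unions of lines), so the morphism $C\to\PP^1$ is a well defined finite flat map of degree two even when $C$ itself is singular, and $\mathfrak{g}^1_2(C)$ is still an honest degree-two line bundle on $C$. Every $F\in p^{-1}(C)$ is a rank-one torsion-free $\cO_C$-module, and tensoring by a line bundle preserves this property together with (semi)stability and the Mukai vector. Hence $\varphi$ extends regularly to the whole of $p^{-1}(C)$.

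For case~(3), with $Q=2L$ a double line, my plan is to take a one-parameter smoothing $\{Q_t\}_{t\in\Delta}$ with $Q_0=2L$ and track the relative line bundle $\mathfrak{g}^1_2(C_t)$ in the compactified Picard scheme of the relative curve over $\Delta$: its specialisation at $t=0$ is not invertible on $2C'$, because the quotient $C_t\to\PP^1_t$ degenerates to a map which fails to be flat over the double line. Combining this with the description of $p^{-1}(2C')$ recalled in Section~\ref{subsec:example -24} (sheaves $i_*G$ with $G$ a rank two bundle on $C'$, and non-trivial extensions parametrised by $M_2^0$), I would then check that no torsion-free specialisation of $\mathfrak{g}^1_2(C_t)$ produces a semistable sheaf with Mukai vector $(0,2H,-2)$ supported on $2C'$, so $\varphi$ is indeterminate on the whole fibre.

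For case~(2), with $Q=L_1\cup L_2$ two distinct lines, the limits of $\mathfrak{g}^1_2(C_t)$ along different smoothings are line bundles on $C=C_1\cup C_2$ of bidegree $(1,1)$ whose classes in $\Pic(C)$ differ by elements of $\Pic^0(C)$. For a strictly semistable $F=i_{1*}L_1'\oplus i_{2*}L_2'\in p^{-1}(C)$, the decomposition of $F$ propagates through the tensor product, so two different limit choices yield non-isomorphic direct sums and thus non-$S$-equivalent points of the target fibre; this forces $\varphi$ to be indeterminate at every semistable non-stable sheaf. For a stable $F\in p^{-1}(C)$, the indecomposability of $F$ as an $\cO_C$-module ensures that the gluing data at the nodes $C_1\cap C_2$ absorbs exactly the $\Pic^0(C)$ ambiguity between possible limits, so $\varphi(F)$ is a well defined single point of $M_{(0,2H,-2)}$. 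The hardest step, which I expect to require either a careful local analysis of gluing data at the nodes or a Fourier--Mukai re-interpretation of $\varphi$, is making this absorption-of-ambiguity argument rigorous precisely on the stable locus.
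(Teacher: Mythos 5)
Your case (1) matches the paper's argument and is fine. The real content of the proposition, however, is case (2), and there your approach rests on an incorrect premise. The $\mathfrak{g}^1_2$'s available on $C=C_1\cup C_2$ are the pullbacks $\cO_C(p_j+q_j)$ of points of the two lines $L_1,L_2$ of the reducible conic, with $p_j,q_j\in C_j$; these have multidegrees $(2,0)$ and $(0,2)$ on $(C_1,C_2)$, not $(1,1)$, and in particular they do \emph{not} differ by an element of $\Pic^0(C)$. So the ``gluing data at the nodes absorbs the $\Pic^0(C)$ ambiguity'' mechanism you propose on the stable locus --- which you yourself flag as the unproven hard step --- cannot work as stated. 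What actually happens, and what the paper proves, is an S-equivalence collapse: writing $0\to F\to i_*G_1\oplus i_*G_2\to Q\to 0$ with $\deg G_j=d_j$, stability of $F$ forces $d_1=d_2=0$ and $Q$ of length $2$, and then both candidates $F(p_1+q_1)$ and $F(p_2+q_2)$ are strictly semistable with the \emph{same} S-equivalence class $[i_*G_1\oplus i_*G_2]$, because $p_j+q_j$ is linearly equivalent on $C_j$ to $n_1+n_2$, the preimage of $L_1\cap L_2$. For strictly semistable $F$ your claim that ``the decomposition of $F$ propagates through the tensor product'' is also unjustified: such $F$ need not decompose, and the paper's case analysis on $(d_1,d_2,\mathrm{length}\,Q)$ shows the two candidates can fail to agree in several distinct ways (one stable and the other not isomorphic to it, one unstable, or both semistable with different S-classes), none of which is captured by a direct-sum argument.

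For case (3) you only outline a degeneration through the compactified Picard scheme and leave the decisive verification (``no torsion-free specialisation produces a semistable sheaf with the right Mukai vector'') undone; this is considerably harder than necessary. The paper's argument is direct: the non-reduced curve $2C'$ carries no $\mathfrak{g}^1_2$ at all, so $\varphi$ cannot be defined on the component $M_2$ of sheaves with schematic support $2C'$, while for $F=i'_*E$ with $E$ a rank two bundle on $C'$ (the component $M_1$) tensoring with the $\mathfrak{g}^1_2$ of $C'$ shifts the degree by $4$ rather than the required $2$ and so lands outside $M_{(0,2H,-2)}$. You should redo case (2) with the degree bookkeeping above and replace the case (3) sketch by this direct argument.
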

\begin{proof}
Recall that $C$ is the double cover of a conic in $\PP^2$, ramified along a sextic curve; the pullback of a point on the conic defines a $\mathfrak{g}^1_2$ on $C$.
\begin{enumerate}
\item If the conic is smooth, there exists only one equivalence class of such a point, so the $\mathfrak{g}^1_2$ is uniquely determined. In this case, the curve $C$ is irreducible (but possibly singular if the conic is tangent to the sextic curve) and the map $\varphi$ is well defined. \\

\item Suppose that $C=C_1\cup C_2$ is reducible; we have two cases.

\emph{Both $C_1$ and $C_2$ are smooth.} Let $F=i_*G$ be a sheaf in $M_{(0,2H,-4)}$. Each $C_j$ has a $\mathfrak{g}^1_2$, call it $\cO_{C_j}(p_j+q_j)$ with $p_j,q_j\notin C_1\cap C_2$, and we denote by $\varphi_j(F)$ the sheaf $F(p_j+q_j)$. Both $\varphi_j(F)$ belong to the moduli space $M_{(0,2H,-2)}$. We claim that $\varphi_1(F)=\varphi_2(F)$ if and only if $F$ is stable. Moreover, in this case $\varphi(F)$ is strictly semistable.

Let us denote by $G_j$ the torsion free part of the restriction of $G$ to $C_j$; $G_j$ is a line bundle on $C_j$ of degree $d_j$. With an abuse of notation we keep calling $i$ the inclusion of the curve $C_j$ in $S$. Since $F$ is semistable, we have that $d_j\geq-1$.
There is a short exact sequence
$$0\to F\to i_*G_1\oplus i_*G_2\to Q\to0,$$
where $Q$ is a torsion sheaf supported on $C_1\cap C_2$. If $F$ is stable, the only possibility is that $Q$ has length $2$ and $d_1=d_2=0$. If the length of $Q$ is $2$ and $F$ is strictly semistable, then either $d_1=1$ and $d_2=-1$ or $d_1=-1$ and $d_2=1$. If the length of $Q$ is $1$, then $F$ is necessary strictly semistable and either $d_1=0$ and $d_2=-1$ or $d_1=-1$ and $d_2=0$. Finally if $Q=0$, then $F$ is polystable and $d_1=d_2=-1$.

Let us now study the semistability of $\varphi_1(F)$. As for $F$, we do that by studying the torsion free restrictions $\varphi_1(F)_1=i_*(G_1(p_1+q_1))$ and $\varphi_1(F)_2=i_*(G_2)$, of degree (respectively) $d_1+2$ and  $d_2$. 

Suppose that $F$ is stable, in which case $d_1=d_2=0$. It follows that the S-equivalence class of $\varphi_1(F)$ is $[i_{*}G_1(p_1+q_1-n_1-n_2)\oplus i_{*}G_2]=[i_{*}G_1\oplus i_{*}G_2]$, since $p_1+q_1-n_1-n_2$ is linearly equivalent to zero on $C_1$. Here $n_1$ and $n_2$ are the nodes of $C$ (possibly coinciding -- recall that $Q$ has length $2$ in this case). 

If $F$ is strictly semistable, then we have three cases, depending on the length of $Q$. If the length of $Q$ is $2$, then either $d_1+2=1$ and $d_2=1$ or $d_1+2=3$ and $d_2=-1$; it follows that $\varphi_1(F)$ is stable in the first case and unstable in the second case. If the length of $Q$ is $1$, then either $d_1+2=2$ and $d_2=-1$ or $d_1+2=1$ and $d_2=0$; it follows that $\varphi_1(F)$ is unstable in the first case and strictly semistable in the second case. If $Q=0$, then $d_1+2=1$ and $d_2=-1$, so $\varphi_1(F)$ is unstable.

We can perform the same analysis for $\varphi_2(F)$ and confront the result with the previous case. For example, if $F$ is stable, then $\varphi_2(F)$ is strictly semistable and its S-equivalence class is $[i_{*}G_1\oplus i_{*}G_2(p_2+q_2-n_1-n_2)]=[i_{*}G_1\oplus i_{*}G_2]$, since $p_2+q_2-n_1-n_2$ is linearly equivalent to zero on $C_2$. Again $n_1$ and $n_2$ are the nodes of $C$. On the other hand, if $F$ is strictly semistable and $Q$ has length $1$, then $\varphi_2(F)$ is stable, but it is not isomorphic to $\varphi_1(F)$. Finally, there is a remaining case (when $F$ is strictly semistable and $Q$ has length $1$) where both $\varphi_1(F)$ and $\varphi_2(F)$ are strictly semistable, but their S-equivalence classes are different.
The claim follows.
 
\emph{At least one between $C_1$ and $C_2$ is singular.} The main tool we used for the smooth case is the Riemann--Roch formula, which holds in the singular case for locally free sheaves (e.g\ \cite[Exercise~IV.1.9]{Hartshorne:AG}). The same computations can be performed in the singular case using the following remark. Sheaves supported on a singular curve $C$ are of the form $i_*F$, where $F$ is torsion free of rank $1$. We then have two cases, either $F$ is locally free or $F$ is the pushforward of a line bundle on a (partial) normalisation. \\

\item Now the curve $C=2C'$ is not reduced; the reduced curve $C'$ is either smooth or has at worst nodal or cusp singularities. It is clear that $C$ itself has not a $\mathfrak{g}^1_2$, so $\varphi$ is not defined on the irreducible component $M_2$ (see Section~\ref{subsec:example -24}). On the other hand, if $F=i'_*E$, where $E$ is a vector bundle of rank $2$ on $C'$, then tensoring by the $\mathfrak{g}^1_2$ of $C'$ gives a vector bundle of degree $2$, which does not belong to $M_{(0,2H,-2)}$, so $\varphi$ cannot be defined on this locus either.
\end{enumerate}
\end{proof}

\begin{cor}\label{cor:indet_phi}
The indeterminacy locus of $\varphi$ consists of the union of the singular locus of $M_{(0,2H,-4)}$ and two $\PP^3$-bundles. 
\end{cor}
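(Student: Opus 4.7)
The plan is to read off the indeterminacy locus of $\varphi$ directly from the three cases of the preceding proposition and to match each piece with a geometric locus that has already been described in the paper.

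First, combining cases (2) and (3) of the proposition, the indeterminacy locus of $\varphi$ is the union of: (a) the strictly semistable sheaves whose Fitting support is a reducible reduced curve in $|2H|$, and (b) the full preimage $p^{-1}(\Delta)$, where $\Delta\subset|2H|$ is the locus of non-reduced curves and $p\colon M_{(0,2H,-4)}\to|2H|$ is the Fitting support morphism. For piece (b), the analysis recalled in Section~\ref{subsec:example -24} decomposes $M_\Delta=p^{-1}(\Delta)$ into its two irreducible components $M_1$ and $M_2$, each of which is generically a $\PP^3$-bundle over a smooth four dimensional moduli space; these are the two $\PP^3$-bundles appearing in the statement.

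Next I will match piece (a) with the singular locus $\Sigma$ of $M_{(0,2H,-4)}$, using the general fact that $\Sigma$ agrees with the locus of $S$-equivalence classes of strictly semistable sheaves. This already yields one inclusion, $\operatorname{Ind}(\varphi)\subseteq \Sigma\cup M_1\cup M_2$. For the reverse inclusion, the point to check is that every element of $\Sigma$ is a point of indeterminacy. The key step is a short Jordan--Hölder argument: any strictly semistable sheaf with Mukai vector $(0,2H,-4)=2\cdot(0,H,-2)$ has Jordan--Hölder factors lying in $M_{(0,H,-2)}$, each supported on a curve in $|H|$, so its Fitting support is either a reducible reduced curve in $|2H|$ (in which case the sheaf lies in (a)) or a non-reduced curve $C=2C'$ (in which case it lies in $p^{-1}(\Delta)=M_1\cup M_2$). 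This excludes strictly semistable sheaves over irreducible curves and places all of $\Sigma$ inside the indeterminacy, giving the desired equality.

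The heavy lifting has already been done in the proposition itself and in the explicit description of $M_\Delta$ in Section~\ref{subsec:example -24}. The one delicate step I expect to highlight in writing the proof is the Jordan--Hölder observation used to trap $\Sigma$ entirely inside cases (a) and (b); beyond that the corollary is a direct packaging of what has been established.
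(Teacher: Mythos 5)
Your proof is correct and follows essentially the same route as the paper's: read the indeterminacy locus off the case analysis of the preceding proposition and identify it with $\Sigma\cup M_1\cup M_2$ using the description of $M_\Delta$ from Section~\ref{subsec:example -24}. The only difference is that you make explicit, via the Jordan--H\"older factors having Mukai vector $(0,H,-2)$, why no strictly semistable sheaf lies over an irreducible curve --- a detail the paper leaves implicit but which is needed to conclude that all of $\Sigma$ sits inside the indeterminacy locus.
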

\begin{proof}
We already determined the indeterminacy locus, and we saw that it consists of the singular locus union the space $M_\Delta=M_1\cup M_2$ of sheaves whose Fitting support is non-reduced. We have already seen that both $M_1$ and $M_2$ are $\PP^3$ bundles over four dimensional irreducible holomorphic symplectic manifolds.
\end{proof}

\begin{cor}
The indeterminacy locus of the map $\widetilde{\varphi}$ is the union of a $\PP^3$-bundle and a generic $\PP^3$-bundle.
\end{cor}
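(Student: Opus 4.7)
The plan is to use Corollary~\ref{cor:indet_phi} together with the general fact that a birational map between smooth projective irreducible holomorphic symplectic manifolds is an isomorphism in codimension one (Huybrechts). Write $\pi\colon\widetilde{M}_{(0,2H,-4)}\to M_{(0,2H,-4)}$ for the symplectic desingularisation, $E=\pi^{-1}(\Sigma)$ for its exceptional divisor, and $\widetilde{M_i}$ for the strict transform of $M_i$, for $i=1,2$. Since by Corollary~\ref{cor:indet_phi} the indeterminacy locus of $\varphi$ equals $\Sigma\cup M_1\cup M_2$, and $\widetilde{\varphi}$ is defined wherever $\pi$ is an isomorphism and $\varphi$ is defined, the indeterminacy locus of $\widetilde{\varphi}$ is contained in $E\cup\widetilde{M_1}\cup\widetilde{M_2}$.

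Next I would show that the exceptional divisor $E$ does not contribute. Because $\widetilde{\varphi}$ is birational between smooth projective $K$-trivial manifolds, Huybrechts' theorem implies that it is an isomorphism in codimension one, so its indeterminacy locus has codimension at least two. In particular, the prime divisor $E$ cannot lie entirely in the indeterminacy locus. Conversely, at a generic point $x$ of $\widetilde{M_i}$ the map $\pi$ is a local isomorphism (for $i=2$ because the open dense $M_2^0\subset M_2$ is disjoint from $\Sigma$; for $i=1$ because $M_1\cap\Sigma$ has positive codimension in $M_1$), so $\pi(x)$ is a smooth point of $M_{(0,2H,-4)}$ at which $\varphi$ fails to be defined by the previous proposition, and hence $\widetilde{\varphi}$ is not defined at $x$. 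By closedness of the indeterminacy locus, the whole of $\widetilde{M_i}$ lies in it, and we conclude that the indeterminacy of $\widetilde{\varphi}$ equals $\widetilde{M_1}\cup\widetilde{M_2}$.

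Finally, the structure of the two strict transforms is read off from how each $M_i$ interacts with $\Sigma$. The open subset $M_2^0\subset M_2$ is dense and disjoint from $\Sigma$, so $\widetilde{M_2}$ is isomorphic to $M_2$ on a dense open subset; together with the fact that the fibres of $m_2\colon M_2\to M_{(0,H,-1)}$ are not modified along the smooth locus, this identifies $\widetilde{M_2}$ with a genuine $\PP^3$-bundle. On the other hand, every fibre of $m_1\colon M_1\to M_{(0,H,-3)}$ meets $\Sigma$ in a Kummer surface of codimension one in the fibre, so the desingularisation modifies every fibre of $m_1$ and $\widetilde{M_1}$ is only a $\PP^3$-bundle over a dense open subset of $M_{(0,H,-3)}$, that is, a generic $\PP^3$-bundle. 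I expect the main technical subtlety to be the first step, namely ruling out that some codimension-two component hidden inside $E$ contributes to the indeterminacy; Huybrechts' codimension-one result handles this cleanly and is the real engine of the argument.
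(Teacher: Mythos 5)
Your reduction of the indeterminacy locus to $E\cup\widetilde{M_1}\cup\widetilde{M_2}$ and your argument that the generic points of the strict transforms $\widetilde{M_i}$ are genuine points of indeterminacy are both fine and agree with the paper. The gap is exactly where you locate the ``main technical subtlety'', and Huybrechts' theorem does not close it. Knowing that $\widetilde{\varphi}$ is an isomorphism in codimension one tells you that the indeterminacy locus $Z$ has codimension at least two, hence that the prime divisor $E$ is not a component of $Z$; it says nothing about whether $Z$ contains closed subsets of $E$ that have codimension $\geq 2$ in the ambient tenfold. Concretely, nothing in your argument excludes a component of $Z$ of the form $\pi^{-1}(W)$ for a proper closed subset $W\subsetneq\Sigma$ not contained in $M_\Delta$ --- for instance the preimage of the deeper singular stratum of $\Sigma$, where the fibres of the blow-up jump in dimension. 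Such a component is perfectly compatible with $\operatorname{codim}Z\geq 2$ but would falsify the corollary, so the codimension-one result is not ``the real engine'' here.

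The paper closes this gap with an argument specific to the structure of the resolution: since $\widetilde{\varphi}$ is defined at the generic point of the exceptional divisor, it extends over every point of $E$ lying above the locus where $M_{(0,2H,-4)}$ has transverse $A_1$ singularities, i.e.\ where the fibres of the blow-up are one-dimensional; this locus is $\Sigma\setminus(\Sigma\cap M_\Delta)$, so the only part of $E$ that can still carry indeterminacy sits above $\Sigma\cap M_\Delta$ and is absorbed into the closures of the two $\PP^3$-bundles. You need some input of this kind --- either the fibre-dimension argument or a direct verification that $\widetilde{\varphi}$ is regular along $\pi^{-1}(\Sigma\setminus M_\Delta)$ --- to finish the proof. (A smaller point: your identification of $\widetilde{M_2}$ as a \emph{genuine} $\PP^3$-bundle because $M_2^0$ avoids $\Sigma$ is a little quick, since $M_2\setminus M_2^0=M_2\cap M_1$ is a $\PP^2$-subbundle which does meet the singular locus; but this is bookkeeping rather than a real obstruction.)
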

\begin{proof}
By Corollary~\ref{cor:indet_phi}, $\varphi$ is not well defined on the singular locus and on the $\PP^3$-bundles $M_1$ and $M_2$. The resolution of singularities is a blow-up, therefore the map $\widetilde{\varphi}$ extends to the generic point of the exceptional divisor, and therefore also on every point where the blow-up map has a one dimensional fibre (i.e.\ where the singularity is of type $A_1$). If $\Sigma$ is the singular locus, then the singularities of $M_{(0,2H,-4)}$ are of type $A_1$ exactly at the locus $\Sigma\setminus(\Sigma\cap M_\Delta)$. The claim follows.
\end{proof}
The corollary implies that the indeterminacy locus of $\widetilde{\varphi}$ coincides with the locus described in Section~\ref{subsec:example -24}. In particular the wall divisor associated to it has degree $-24$ and divisibility $3$, and $\widetilde{\varphi}$ is the Mukai flop around the union of two $\PP^3$-bundles.

\section{The K\"ahler cone}\label{sec:ample}

In this section we want to give a numerical description of wall divisors, and therefore of the ample (and K\"ahler) cone. 
Our strategy moves in two directions: on one hand, we will produce Wall divisors by looking at special extremal contractions and on the other hand we will prove that some monodromy orbits are not associated to Wall divisors by giving ample classes orthogonal to special elements in the orbit. Both steps will involve in a fundamental way the knowledge of the birational geometry of the singular moduli spaces of O'Grady type.

\begin{prop}\label{prop:murisotto}
Let $X$ be a tenfold of O'Grady type and let $D$ be a wall divisor on $X$. Then there exist a projective K3 surface $S$, $v=2w\in H^{2*}(S)$ a Mukai vector with $v^2=8$ and $H$ a $v$ generic polarization such that the parallel transport from $X$ to $\widetilde{M}_v(S,H)$ sends $D$ to a wall divisor $D'$ and the projection of $D'$ inside $\widetilde{\Sigma}^\perp=H^2(M_v(S,H))$ is a multiple of one of the following:
\begin{itemize}
\item A divisor of square $-2$ and divisibility $1$,
\item A divisor of square $-2$ and divisibility $2$,
\item A divisor of square $-4$ and divisibility $1$,
\item A divisor of square $-10$ and divisibility $2$.
\end{itemize}
\end{prop}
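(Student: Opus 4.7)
The strategy is to reduce to the case $X = \widetilde{M}_v(S,H)$ by deformation and then invoke the classification of walls in Bridgeland stability space due to Meachan--Zhang (Theorems~\ref{thm:mz1} and \ref{thm:mz2}). To set up the reduction, I would use the deformation-invariance of the wall-divisor property together with Eichler's criterion (Lemma~\ref{lem:eichler}) to exhibit a projective K3 surface $S$, a Mukai vector $v = 2w$ with $w^2 = 2$, a $v$-generic polarization $H$, and a parallel-transport operator sending the pair $(X,D)$ to a pair $(\widetilde{M}_v(S,H), D')$. The key input here is the Lehn--Pacienza theorem on the MMP for IHS manifolds, which ensures that the extremal contraction on $X$ cut out by $D$ is preserved by the deformation, so that $D'$ remains dual to an extremal ray of the Mori cone of $\widetilde{M}_v(S,H)$ and induces a birational modification that descends to a birational contraction of the singular moduli space $M_v(S,H)$.

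Next, I would split $D'$ according to the orthogonal decomposition
\[
D' \;=\; D'_0 + c\,\widetilde{\Sigma}, \qquad D'_0 \in \widetilde{\Sigma}^\perp \cong v^\perp \subset \oH^{2*}(S,\ZZ),
\]
so that $D'_0$ is precisely the projection to classify. By Theorem~\ref{thm:mz2}, the induced birational transformation of $M_v(S,H)$ corresponds to crossing a wall in the Bridgeland stability space $\mathrm{Stab}^{\dagger}(S)$, and Theorem~\ref{thm:mz1} tells us that such a wall is governed by a $(1,1)$-class $s \in \oH^{2*}(S,\ZZ)$ with $s^2 = -2$ and $(s,v) \leq 4$. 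Since $v = 2w$ forces $(s,v)$ to be even, only $(s,v) \in \{0, 2, 4\}$ occur. In each case a direct computation exhibits the primitive integer representative of $D'_0$ inside $v^\perp$: up to sign it is $s$ itself (of square $-2$) when $(s,v)=0$; the class $2s - w$ (of square $-10$) when $(s,v)=2$; and $s - w$ (of square $-4$) when $(s,v)=4$.

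To finish, since $v^\perp$ still contains two copies of $U$, Eichler's criterion (Lemma~\ref{lem:eichler}) shows that each such primitive integer class is determined up to the subgroup $\widetilde{SO}^+(v^\perp)$ by its square and by its image in the discriminant group of $v^\perp$, i.e.\ by its divisibility. A case-by-case check (exploiting that $w$ is primitive with $(s,w)$ computed above) yields exactly the four types listed in the statement: square $-2$ with divisibility $1$ or $2$ coming from $(s,v) = 0$, square $-10$ with divisibility $2$ coming from $(s,v) = 2$, and square $-4$ with divisibility $1$ coming from $(s,v) = 4$.

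The main technical obstacle, as I see it, lies in the first step: properly combining the Lehn--Pacienza MMP with the deformation-invariance of wall divisors, so that the pair $(\widetilde{M}_v(S,H), D')$ produced by the deformation really realises $D'$ as a wall divisor dual to a genuine extremal contraction, and this contraction in turn descends to a wall in $\mathrm{Stab}^{\dagger}(S)$ covered by Meachan--Zhang. Once this bridge between the birational geometry of the smooth model $\widetilde{M}_v$, that of the singular moduli space $M_v$, and the chamber structure of $\mathrm{Stab}^{\dagger}(S)$ is in place, the remainder of the argument is essentially lattice bookkeeping.
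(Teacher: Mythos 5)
Your high-level strategy coincides with the paper's (deform to a resolved moduli space, descend to the singular moduli space via the MMP, then invoke Meachan--Zhang), and your lattice endgame is correct: the classes $s$, $2s-w$, $s-w$ for $(s,v)=0,2,4$ do have squares $-2$, $-10$, $-4$ and account for the four listed types. However, the step you yourself flag as ``the main technical obstacle'' is precisely the content of the proof, and your sketch does not supply the ideas needed to close it. Concretely, three things are missing or misstated. First, the deformation must be chosen (via Lemma~\ref{lem:eichler_gen}, not just Lemma~\ref{lem:eichler}) so that the saturated lattice $T=\langle D',\widetilde{\Sigma}\rangle$ is \emph{negative definite}; this is what makes it possible to find a single big klt pair whose MMP simultaneously contracts $\widetilde{\Sigma}$ and performs the modification attached to $D'$. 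You never impose this condition, and without it the extremal ray dual to $D'$ need not interact with the contraction of $\widetilde{\Sigma}$ at all. Second, the actual bridge is an auxiliary MMP: one picks a very general big and movable $P\in T^\perp$, perturbs to $P'=P-\epsilon D'-\eta\widetilde{\Sigma}$, and runs the MMP for the klt pair $(\widetilde{M}_v(S,H),\mu P')$. The role of Lehn--Pacienza here is to guarantee that one may contract $\widetilde{\Sigma}$ as the \emph{first} step and that the MMP still terminates, so that the remaining steps take place on the singular space $M_v(S,H)$, where the generality of $P$ forces termination after at most one further step, which must contract or flop the image of the extremal curve $R\in T\otimes\QQ$; only then do Theorems~\ref{thm:mz2} and~\ref{thm:mz1} apply. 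Your reading of Lehn--Pacienza as ``ensuring that the extremal contraction is preserved by the deformation'' is not what that theorem says (preservation under deformation is the separate deformation-invariance of wall divisors), and it does not substitute for the MMP construction. Third, one also needs the Kawamata--Morrison cone conjecture for the movable cone to arrange $D'^\perp\cap\overline{\mathcal{BK}}\neq 0$, so that an extremal curve in $T\otimes\QQ$ actually exists on some birational model; this input is absent from your outline.

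Until the negative-definiteness condition and the auxiliary klt pair are put in place, the claim that ``the induced birational transformation of $M_v(S,H)$ corresponds to crossing a wall in $\mathrm{Stab}^{\dagger}(S)$'' is unsupported: a wall divisor on the resolution $\widetilde{M}_v(S,H)$ (e.g.\ the one attached to the lagrangian $\PP^5$ of Example~\ref{example:lP^5}) can correspond to a birational modification that is invisible on the singular moduli space unless the geometry is arranged as above. So the proposal correctly identifies the target theorems and the final computation, but the central reduction remains a genuine gap.
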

\begin{proof}
We want to deform $X$ to the resolution of a moduli space of sheaves on a K3 surface in such a way that the parallel transport $D'$ of $D$ and $\widetilde{\Sigma}$ generate a negative definite lattice. Without loss of generality, we can take $v=(2,0,-2)$, see Lemma \ref{lem:eichler_gen} and $D'=aE+k\widetilde{\Sigma}$, with $E\in \widetilde{\Sigma}^\perp$ and $a,k\in\frac{1}{2}\mathbb{Z}$ with $a+k\in\mathbb{Z}$ (see example \ref{ex:M_S}). If $\div(D)=1$, we can take $k=0$ and $E^2<0$ by Lemma \ref{lem:eichler}, otherwise we have $k=1$ or $k=1/2$ and $E^2<0$. Let $T$ be the saturation of the lattice generated by $D'$ and $\widetilde{\Sigma}$ (which coincides with the saturation of the lattice generated by $E$ and $\widetilde{\Sigma}$ and is therefore negative definite). Up to replacing $D'$ with an isometric element, we can suppose that $D'^\perp\cap \overline{\mathcal{BK}}_{\widetilde{M}_v(S,H)}\neq 0$ by the Kawamata Morrison cone conjecture for the Movable cone (see \cite[Section 6]{mark_tor}).
This implies that there is an extremal curve $R$ on a (possibly different) smooth birational model of $\widetilde{M}_v(S,H)$ which, up to embedding $H_2(X,\mathbb{Z})$ in $H^2(X,\mathbb{Q})$ by lattice duality, lies in $T\otimes\mathbb{Q}$ and is not a multiple of $\widetilde{\Sigma}$. Notice that, on every smooth birational model of $\widetilde{M}_v(S,H)$, this extremal curve is either effective or anti-effective by \cite{bht} and \cite{Mongardi:Cones}.
 As our manifold is projective, we can take a very general big and movable divisor $P$ in ${D'}^\perp \cap \widetilde{\Sigma}^\perp$ (the orthogonal to $T$ has signature $(1,\rho-3)$ in $NS(\widetilde{M}_v(S,H))$). Let $P':=P-\epsilon D'-\eta \widetilde{\Sigma}$ for small $\epsilon$ and $\eta$: clearly, it is still a big divisor. 
Up to replacing $P'$ with a multiple, we can suppose that $P'$ is big and effective. We can also take $P$ (and therefore also $P'$ and the surface $S$) very general with respect to these properties, i.e. all negative divisors orthogonal to $P$ are in $T$ (and the Picard rank of $S$ is one). Let us run the MMP for the pair $(\widetilde{M}_v(S,H),\mu P')$ (for $\mu$ small enough, the pair is klt, see \cite[Remark~12]{hass_ts_moving}). By \cite[Theorem 4.1]{lp}, we can contract the divisor $\widetilde{\Sigma}$ as the first step of this MMP and it will nevertheless terminate. Therefore, at the second step we are performing the MMP for the pair $(M_v(S,H),\mu \pi(P'))$. As the class $P$ was in $T^\perp\subset \widetilde{\Sigma}^\perp$, we can consider the ($\mathbb{Q}$-Cartier) divisor $\pi(P)$ identified with $P$ under the identification $H^2(M_v(S,H))\cong \widetilde{\Sigma}^\perp$. It follows that $\pi(P')=P-\epsilon \pi(D')$ under this identification. As $\pi(P)$ is still orthogonal to a single negative class given by $T\cap\widetilde{\Sigma}^\perp$, the MMP for the pair $(M_v(S,H),\mu \pi(P'))$ terminates after at most one step. As either $\pi(R)$ or $\pi(-R)$ is an effective curve negative with $\pi(P')$, there is at least one step in this MMP which contracts it or flops it. Therefore, by Theorem \ref{thm:mz2}, there is a wall in the space of stability conditions given by a negative class orthogonal to $\pi(P)$. As the only such class is a generator of $T\cap\widetilde{\Sigma}^\perp$ the claim follows by the classification of these classes contained in Theorem \ref{thm:mz1}.  
\end{proof}

\begin{oss}\label{oss:murisotto}
The statement of Proposition \ref{prop:murisotto} actually holds for all very general K3 surfaces such that $\langle \widetilde{\Sigma},D' \rangle$ is negative definite. Indeed, in this case there is a MMP contracting or flopping all the curves in this negative lattice, which in turn implies that the singular moduli space has some non generic stability conditions. 
\end{oss}
\begin{prop}\label{wall:esclusione_a_raffica}
Let $X$ be a manifold of O'Grady type and let $D\in \Pic(X)$ be a wall divisor. Then one of the following is satisfied:
\begin{enumerate}
\item $D^2=-2$ and $\div(D)=1$,
\item $D^2=-4$ and $\div(D)=1$,
\item $D^2=-6$ and $\div(D)=3$,
\item $D^2=-24$ and $\div(D)=3$.
\end{enumerate}
\end{prop}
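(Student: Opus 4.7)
The plan is to combine Proposition~\ref{prop:murisotto} with the wall divisors exhibited in Section~\ref{sec:examples} to pin down, up to monodromy, every possible wall divisor on a tenfold of OG10 type. Because $(D^2,\div(D))$ is a monodromy invariant by Theorem~\ref{monodromy}, I may parallel transport $D$ to a wall divisor $D'$ on a resolved moduli space $\widetilde{M}_v(S,H)$ as supplied by Proposition~\ref{prop:murisotto}, and write $D'=\alpha+k\widetilde{\Sigma}$ with $\alpha\in v^\perp\otimes\QQ$ the orthogonal projection of $D'$ onto $\widetilde{\Sigma}^\perp$. By that proposition, $\alpha$ is a rational multiple of a primitive $e\in v^\perp$ whose pair (square, divisibility in $v^\perp$) is one of $T_1=(-2,1)$, $T_2=(-2,2)$, $T_3=(-4,1)$, $T_4=(-10,2)$. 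Integrality of $D'$, combined with Theorem~\ref{thm:og10prelim}(3), then forces $\alpha\in\ZZ\cdot e$ with $k\in\ZZ$ in types $T_1$ and $T_3$, while allowing also the half-integer lift $\tfrac{1}{2}(e+\widetilde{\Sigma})\in\oH^2(\widetilde{M}_v,\ZZ)$ in types $T_2$ and $T_4$.

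Next I would identify, in each projection type, the unique primitive wall divisor on $\widetilde{M}_v$ by tracing the associated extremal contraction. In types $T_1$ and $T_3$, Theorem~\ref{thm:mz1} produces respectively a divisorial contraction (with $(s,v)=0$) and a small contraction (with $(s,v)=4$) on $M_v$; both happen on the locally factorial locus, so they lift to $\widetilde{M}_v$ without engaging $\widetilde{\Sigma}$, and the wall divisor is $\pm e$ itself, yielding cases~(1) and~(2). In type $T_2$, the divisorial contraction is of a non-factorial Weil divisor as in Example~\ref{ex:M_S}, and the primitive wall divisor on $\widetilde{M}_v$ is the strict transform $\widetilde{B}=\tfrac{1}{2}(e+\widetilde{\Sigma})$, of square $\tfrac{1}{4}(-2-6)=-2$ and divisibility $1$ (the pairing $(\widetilde{B},e)=-1$ certifies this), again case~(1). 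In type $T_4$, the small contraction has exceptional locus the generic $\PP^3$-bundle of Example~\ref{example:lP^3-bundle}, which meets the singular locus of $M_v$, so the extremal ray on $\widetilde{M}_v$ intersects $\widetilde{\Sigma}$ nontrivially; the explicit computation there produces the wall divisor $\tfrac{1}{2}(3e+\widetilde{\Sigma})$, of square $\tfrac{1}{4}(9(-10)-6)=-24$ and divisibility $3$, matching case~(4). Finally, the case $\alpha=0$ forces $D'=\pm\widetilde{\Sigma}$, the exceptional divisor of the O'Grady resolution, giving case~(3).

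The main obstacle will be excluding all remaining ``mixed'' candidates $D'=me+k\widetilde{\Sigma}$ with $m,k\neq 0$ lying outside the four values identified above; for instance $e+\widetilde{\Sigma}$ in type $T_1$ is primitive of square $-8$ and divisibility $1$, yet is not on the list, so Proposition~\ref{prop:murisotto} alone is insufficient. To rule these out I would invoke Theorem~\ref{thm:mz2}: since every $\QQ$-factorial $K$-trivial birational model of $\widetilde{M}_v(S,H)$ comes from a generic stability condition, every wall of its birational K\"ahler chamber decomposition is swept out either by a Bridgeland wall already classified by Theorem~\ref{thm:mz1} (and thus tabulated above via $T_1$--$T_4$), by the prime exceptional $\widetilde{B}$, or by $\widetilde{\Sigma}$. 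Any other primitive class in $\langle e,\widetilde{\Sigma}\rangle$ therefore admits an ample class of a neighbouring chamber in its orthogonal complement, so cannot be a wall divisor. This implements the ``producing ample classes'' half of the strategy announced at the start of Section~\ref{sec:ample}, and finishes the proof.
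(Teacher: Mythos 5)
Your overall skeleton — reduce to $\widetilde{M}_v(S,H)$, decompose $D'$ against $\widetilde{\Sigma}$, and feed the projection into Proposition~\ref{prop:murisotto} — is the paper's, but the way you handle the ``mixed candidates'' $me+k\widetilde{\Sigma}$ is where the argument breaks. The paper does not fix one representative and then try to classify all integral lifts of a given projection type; it uses the monodromy freedom (Lemma~\ref{lem:eichler}) to \emph{normalize the representative first}: for $\div(D)=1$ one may take $D'$ itself inside $\widetilde{\Sigma}^\perp$ with divisibility $1$ in $v^\perp$, so the projection \emph{is} $D'$ and primitivity forces $D^2\in\{-2,-4\}$ outright (your $e+\widetilde{\Sigma}$ of square $-8$ never arises, because one instead applies Proposition~\ref{prop:murisotto} to the representative of the $(-8,1)$ orbit lying in $v^\perp$, which is immediately contradictory); for $\div(D)=3$ one may take $D'=3E+\widetilde{\Sigma}$ with $E\in\oH^2(S)$, leaving only $D^2\in\{-6,-24,-42\}$. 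Your assertion that each projection type carries ``the unique primitive wall divisor'' is exactly the statement requiring proof: in type $T_3$ both $e$ (square $-4$) and $3e+\widetilde{\Sigma}$ (square $-42$, divisibility $3$) project to multiples of the same listed class, and deciding between them is the hard point.

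The mechanism you propose for that decision — Theorem~\ref{thm:mz2} forcing every wall of the birational K\"ahler chamber decomposition of $\widetilde{M}_v(S,H)$ to be an mz1-wall, $\widetilde{B}^\perp$, or $\widetilde{\Sigma}^\perp$ — is not valid. Theorems~\ref{thm:mz1} and~\ref{thm:mz2} live on the \emph{singular} moduli space: they classify its birational models and the walls of its movable cone, and say nothing directly about the strictly finer chamber structure upstairs. Indeed Section~\ref{sec:counter-counter} is devoted to the fact that a single class downstairs can correspond to two distinct walls on the resolution, one of which (the lagrangian $\PP^5$) ``cannot be clearly seen in the singular moduli space''; so one cannot read off the walls of $\widetilde{M}_v$ from the Bridgeland walls of $M_v$. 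Consequently your argument never actually excludes $(D^2,\div(D))=(-42,3)$. The paper's exclusion is a separate, concrete step: on $\widetilde{M}_{(0,2H,-2)}$ of a very general degree-two K3 it produces a representative $W'=\tfrac{3W-\widetilde{\Sigma}}{2}$ of the $(-42,3)$ orbit with $W=4D_1+5D_2$ primitive of square $-18$ and divisibility $2$ in the singular moduli space, observes that $\langle W',\widetilde{\Sigma}\rangle$ is negative definite so that Remark~\ref{oss:murisotto} applies, and derives a contradiction because the projection of $W'$ is a multiple of $W$, which is not on the list of Theorem~\ref{thm:mz1}. Without this (or the geometric Pfaffian-cubic substitute given at the end of Section~\ref{sec:IJac}), the proposition is not proved. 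A final minor point: the realization of cases (1)--(4) by actual wall divisors is not needed here — it is the content of Proposition~\ref{prop:wall} — so the effort spent matching contractions to types could be dropped from this proof.
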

\begin{proof}
Up to the monodromy action, we can suppose that $X\cong \widetilde{M}_v(S,H)$ for a projective K3 surface $S$, a Mukai vector $v=(2,0,-2)$ of square $8$ and a $v$-generic polarization $H$. Let us first suppose that $\div(D)=1$.  Up to the monodromy action, we can suppose $q(D,\widetilde{\Sigma})=0$ by Lemma \ref{lem:eichler}. Applying Proposition \ref{prop:murisotto}, this immediately implies $D^2=-2$ or $-4$ as claimed.
Let us now suppose $\div(D)=3$. This time, up to the monodromy action, we can write $D=3E+\widetilde{\Sigma}$ by Lemma \ref{lem:eichler}, with $E\in \widetilde{\Sigma}^\perp$ primitive (and actually, inside $H^2(S)$). Therefore, the projection of $D$ in $\widetilde{\Sigma}$ is $E$. Proposition \ref{prop:murisotto} now implies that one of the following holds: $E=0,$ $E^2=-2$ or $E^2=-4$. The first two cases correspond to the cases in our claim, let us exclude the third one.\\
Let us take a very general polarized K3 surface $(S,H)$ of degree $2$ and take the Mukai vector $(0,2H,-2)$ (and $H$ as the $v$-generic polarization). Let us take the two orthogonal generators $D_1=(-2,H,0)$ and $D_2=(-2,H,-1)$ of $\Pic(M_v(S,H))$. Notice that $D_2$ has divisibility $2$ in the singular moduli space. Let $X:=\widetilde{M}_v(S,H)$. We have $\Pic(X)=\langle D_1,\frac{D_2+\widetilde{\Sigma}}{2},\widetilde{\Sigma} \rangle$, where $\widetilde{\Sigma}$ is the exceptional divisor. Let $W=4D_1+5D_2$, it has square $-18$ and $\div(W)=2$ in the singular moduli space. Notice that $\frac{W-\widetilde{\Sigma}}{2}\in\Pic(X)$. Suppose now that divisors of square $-42$ and divisibility $3$ are wall divisors. It would follow that $W':=3\frac{W-\widetilde{\Sigma}}{2}+\widetilde{\Sigma} $ is a wall divisor such that $\langle W',\widetilde{\Sigma} \rangle$ is negative definite, hence the projection of $W'$ inside $\widetilde{\Sigma}^\perp$ must be a multiple of one of the cases contained in Theorem \ref{thm:mz1} by Remark \ref{oss:murisotto} and the proof of Proposition \ref{prop:murisotto}. However, this projection is clearly a multiple of $W$, which gives the desired contradiction.
\end{proof}

\begin{prop}\label{prop:wall}
Let $X$ be a manifold of OG10 type and $D\in \Pic(X)$. Then $D$ is a wall divisor if and only if one of the following holds:
\begin{enumerate}
\item $D^2=-2$ and $\div(D)=1$,
\item $D^2=-6$ and $\div(D)=3$,
\item $D^2=-4$ and $\div(D)=1$,
\item $D^2=-24$ and $\div(D)=3$.
\end{enumerate}
Moreover, there is a curve of class proportional to $D$ covering a divisor in the first two cases, a codimension $5$ rational subvariety in the third case and a codimension $3$ subvariety in the last case.
\end{prop}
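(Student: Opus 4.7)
The plan is to split the equivalence into a necessary direction and a sufficient direction, plus the geometric covering description in the ``moreover'' clause. The necessary direction is already established by Proposition~\ref{wall:esclusione_a_raffica}: any wall divisor must have invariants $(D^2,\div(D))$ among the four pairs listed. For the sufficient direction, I would exploit that being a wall divisor is monodromy-invariant (as recalled before this section) together with the identification $\Mon^2(X)=\Or^+(\oH^2(X,\ZZ))$ from Theorem~\ref{monodromy}. By Eichler's criterion (Lemma~\ref{lem:eichler}) applied to $L=U^3\oplus E_8(-1)^2\oplus A_2(-1)$, primitive vectors of fixed square and fixed divisibility form a single $\Or^+$-orbit. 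It therefore suffices, for each of the four pairs, to exhibit a single wall divisor with those invariants on some manifold of OG10 type; all others are obtained by parallel transport through families that deform the Hodge structure while preserving the lattice data.

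For the four cases I would harvest the constructions of Section~\ref{sec:examples}. The pairs $(-2,1)$ and $(-6,3)$ are realized respectively by the divisors $\widetilde{B}$ and $\widetilde{\Sigma}$ of Example~\ref{ex:M_S}, which are prime exceptional by Proposition~\ref{prop:pex} and hence wall divisors. The pair $(-4,1)$ is realized by Section~\ref{example:zero section} (the zero section in the Fitting lagrangian fibration) or equivalently by Section~\ref{example:lP^5} (the flopping $\PP^5$ of extensions appearing at a non-generic wall in the space of stability conditions). The pair $(-24,3)$ is realized by Section~\ref{subsec:example -24}, where the two $\PP^3$-bundles over the locus of non-reduced curves in $|2H|$ are simultaneously contracted/flopped, and the computation performed there identifies the dual class as a divisor with exactly these invariants.

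For the ``moreover'' statement, the covering families come directly from these same constructions. In the first two cases, the prime exceptional divisor is uniruled and is itself swept out by the covering family of curves dual to $D$, giving a divisor covered by the curve class. In the third case, the flopped $\PP^5$ is rational, of codimension five in the tenfold, and is ruled by lines whose Poincar\'e dual is proportional to $D$. In the fourth case, the union of $\PP^3$-bundles has total dimension seven, hence codimension three, and is covered by its ruling lines, once more of class dual to $D$. The only substantial obstacle is really absorbed into Proposition~\ref{wall:esclusione_a_raffica}, which carries out the delicate exclusion via the Lehn--Pacienza MMP and the Meachan--Zhang classification of walls in the space of stability conditions; what remains here is a clean assembly of those examples with Eichler's criterion, together with the standard observation that the Hodge structure can be deformed freely inside each $\Or^+$-orbit by surjectivity of the period map for OG10.
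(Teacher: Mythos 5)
Your proposal follows essentially the same route as the paper: necessity from Proposition~\ref{wall:esclusione_a_raffica}, sufficiency by combining monodromy-invariance of wall divisors with Eichler's criterion (so that one example per pair of invariants suffices), and the examples of Section~\ref{sec:examples} to realize each pair. The only cosmetic difference is that for the pair $(-24,3)$ you invoke the two $\PP^3$-bundles over the locus of non-reduced curves (Section~\ref{subsec:example -24}), whereas the paper uses the $\PP^3$-bundle over $\Hilb^2(S)$ of Example~\ref{example:lP^3-bundle}; both produce a wall divisor with the right square and divisibility and a codimension-$3$ flopped locus, so either works.

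There is one small but genuine gap in your treatment of the ``moreover'' clause. That clause asserts the existence of a covered subvariety of the stated codimension on \emph{every} manifold $X$ of OG10 type carrying such a wall divisor, not merely on the model examples. For the first two cases this follows from Markman's results on (stably) prime exceptional divisors, which you implicitly use; but for the codimension-$5$ and codimension-$3$ cases you only exhibit the flopped locus on the specific moduli spaces, and deforming the Hodge structure within an $\Or^+$-orbit does not by itself carry the geometric locus along. The paper closes this by citing Amerik--Verbitsky (\cite[Theorem~1.6]{amver2}): the rational contracted curve deforms in the Hodge locus of the wall divisor and always sweeps out a subvariety of the same codimension, with all such subvarieties diffeomorphic. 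You should add this (or an equivalent deformation statement for contraction centers) to upgrade your example-based description to the general claim.
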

\begin{proof}
By Proposition~\ref{wall:esclusione_a_raffica}, the above cases are the only ones which can give wall divisors. By Proposition~\ref{prop:pex}, the first two cases have a multiple which is stably prime exceptional, hence they are wall divisors and there is a curve ruling a divisor proportional to them by \cite[Section 5 and 6]{mark_tor}.
Notice that if a wall divisor corresponds to a codimension $i$ contraction on some manifold, by \cite[Theorem 1.6]{amver2} the rational contracted curve deforms in the Hodge locus of the wall divisor and always covers a codimension $i$ subvariety and these subvarieties are always diffeomorphic.
By Example~\ref{example:lP^5} and Example~\ref{example:zero section}, the third case is a wall divisor corresponding to a lagrangian $\PP^5$. Finally the last case follows from Example~\ref{example:lP^3-bundle}.
\end{proof}
\begin{thm}\label{thm:ample}
Let $X$ be a manifold of OG10 type. Then, the K\"ahler cone of $X$ is one of the connected components of $$\mathcal{C}(X)\setminus \bigcup_{(0>D^2\geq -4) \text{ or } (\div(D)=3 \text{ and } 0>D^2\geq -24)} D^\perp.$$

\end{thm}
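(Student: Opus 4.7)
The plan is to combine the general chamber-structure result for the ample cone with the classification of wall divisors already achieved in Proposition~\ref{prop:wall}. Recall from \cite{Mongardi:Cones} that for every irreducible holomorphic symplectic manifold $X$ the K\"ahler cone is exactly one of the connected components of
$$\mathcal{C}(X)\setminus\bigcup_{D\text{ wall divisor}} D^\perp.$$
Since $D^\perp=(\lambda D)^\perp$ for all nonzero rational $\lambda$, we may restrict the union to primitive $D\in\Pic(X)$ without changing the hyperplane arrangement. Hence the Theorem reduces to checking that the four types of primitive classes appearing in Proposition~\ref{prop:wall} are exactly the primitive classes singled out by the numerical condition in the statement.

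Concretely, I would argue as follows. Proposition~\ref{prop:wall} identifies the primitive wall divisors as those with
$(D^2,\div(D))\in\{(-2,1),(-4,1),(-6,3),(-24,3)\}$. Since the discriminant group of $\oH^2(X,\ZZ)\cong U^3\oplus E_8(-1)^2\oplus A_2(-1)$ has order three, the divisibility of any primitive class is either $1$ or $3$. If $\div(D)=1$ then $D^2$ is an arbitrary negative even integer, and the condition $0>D^2\ge -4$ isolates $D^2\in\{-2,-4\}$. If $\div(D)=3$, the proof of Lemma~\ref{lem:lagrorbit} shows $D^2\equiv -6\pmod{18}$, so in the range $0>D^2\ge -24$ the only allowed squares are $-6$ and $-24$. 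Together these four cases reproduce exactly the list of Proposition~\ref{prop:wall}.

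With this bookkeeping done, the union in the statement coincides with the union of the orthogonal hyperplanes to wall divisors, and the Theorem follows from the chamber description recalled above. The genuine content of the proof is entirely contained in Proposition~\ref{prop:wall}; the step outlined here is a short arithmetic translation and I do not expect any obstacle in it.
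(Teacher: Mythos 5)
Your proposal is correct and follows essentially the same route as the paper: both invoke the chamber description of the K\"ahler cone from \cite{Mongardi:Cones} and then substitute the classification of wall divisors from Proposition~\ref{prop:wall}. The only difference is that you spell out the arithmetic matching (in particular the congruence $D^2\equiv -6\pmod{18}$ for divisibility-$3$ classes, taken from Lemma~\ref{lem:lagrorbit}) which the paper leaves implicit, and that check is correct.
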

\begin{proof}
Notice that if $D$ is a divisor of non negative square, then $D^\perp\cap \mathcal{C}(X)=\emptyset$, so the relevant divisors for the claim are only those of negative square. 
By \cite{Mongardi:Cones}, the K\"ahler cone is a connected component of $$\mathcal{C}(X)\setminus \bigcup_{D \text{ wall divisor }} D^\perp.$$
By Proposition \ref{prop:wall}, wall divisors are divisors of square either $-2$ or $-4$ and divisors of divisibility $3$ and square either $-6$ or $-24$, therefore the claim follows. 

\end{proof}

\section{Countering the counterexample: two walls coming together}\label{sec:counter-counter}

The previous sections, especially Example~\ref{example:lP^5} and Example~\ref{example:lP^3-bundle}, show that there are birational transformations of the smooth O'Grady tenfolds which are not well visible in the singular moduli space.

This phenomenon is what led to the wrong statement in \cite{monwrong} that the monodromy group of manifolds of OG10 type is strictly contained in the group of orientation preserving isometries, which is in clear contradiction with \cite{ono}. This section is meant to serve as an erratum to \cite{monwrong}, and we try to explain what goes wrong, why and where.

Let us look more closely at the (wrong) counterexample contained in \cite{monwrong}: the author claims that a divisor $D$ of square $-10$ and divisibility 2 on the singular moduli space $M_v(S,H)$ (with $H$ $v$-generic) pulls back to a wall divisor $\widetilde{D}$ on the resolution $\widetilde{M}_{v}(S,H)$, where they are divisors of square $-10$ and divisibility one. Then, by proving that there are ample classes orthogonal to some divisors of square $-10$ in an appropriate projective deformation of $\widetilde{M}_{v}(S,H)$, he proves that the monodromy group cannot be maximal, as otherwise it would send a wall divisor into something orthogonal to an ample class.

What goes wrong in this argument is the first claim (and the others are correct, although the example contained in \cite[Example 5.2]{monwrong} does not work as claimed, see Remark \ref{oss:horrorsymmetriae}). 
Indeed, the pull back of an element of square $-10$ and divisibility $2$ on $M_v(S,H)$ need not be a wall divisor, however there are two contractions associated with it, in the sense that the saturated lattice containing $\widetilde{D}$ and $\widetilde{\Sigma}$ is generated (over $\mathbb{Q}$) by wall divisors, essentially as is done in Proposition~\ref{prop:murisotto}.
\begin{prop}
Let $(S,H)$ be a very general degree two K3 surface and let $v=(2,0,-2)$ be a Mukai vector. Let $D=(3,2H,3)\in H^2(M_v(S,H))$. Then, the saturated lattice $T\subset H^2(\widetilde{M}_v(S,H))$ containing the pullback $\pi^*D$ of $D$ and the exceptional divisor $\widetilde{\Sigma}$ is generated by $\widetilde{\Sigma}$ and a divisor $E$ with $E^2=-4$ and $(E,\widetilde{\Sigma})=3$. Moreover, $E$ and $3E+\Sigma$ are both wall divisors corresponding to small contractions.
\end{prop}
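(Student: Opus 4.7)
The approach is a direct lattice calculation in $\oH^2(\widetilde{M}_v(S,H),\ZZ)$ using Example~\ref{ex:M_S}, followed by an appeal to Proposition~\ref{prop:wall}. Since $(S,H)$ is very general of genus two, the algebraic part of $v^\perp$ is generated by $b_0 = (1,0,1)$ and $h = (0,H,0)$, with $b_0^2=-2$, $h^2=2$, $(b_0,h)=0$. Decomposing $D = 3 b_0 + 2 h$ one computes $D^2 = -10$, and pairings against $b_0$, $h$, and against classes of the form $(0,\alpha,0)$ with $\alpha \in \oH^2(S,\ZZ)$ (using unimodularity of $\oH^2(S,\ZZ)$ and primitivity of $H$) give $\div(D) = 2$ in $v^\perp$.

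Under the embedding $v^\perp \hookrightarrow \oH^2(\widetilde{M}_v,\ZZ) \cong \oH^2(S,\ZZ) \oplus \langle \widetilde{B}, \widetilde{\Sigma}\rangle$, the identity $\pi_v^*(2B) = 2\widetilde{B} + \widetilde{\Sigma}$ from Example~\ref{ex:M_S} yields
\[
\pi^* D \;=\; 6\widetilde{B} + 3\widetilde{\Sigma} + 2h.
\]
Because $\div(D) = 2$, the last part of Theorem~\ref{thm:og10prelim} guarantees that
\[
E \;:=\; \tfrac{1}{2}(\pi^* D - \widetilde{\Sigma}) \;=\; 3\widetilde{B} + \widetilde{\Sigma} + h
\]
is integral. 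Using $\widetilde{B}^2 = -2$, $\widetilde{\Sigma}^2 = -6$, $(\widetilde{B},\widetilde{\Sigma}) = 3$, $h^2 = 2$ with $h$ orthogonal to $\widetilde{B}$ and $\widetilde{\Sigma}$, a direct computation gives $E^2 = -4$ and $(E,\widetilde{\Sigma}) = 3$, and $\pi^* D = 2E + \widetilde{\Sigma} \in \langle E, \widetilde{\Sigma}\rangle$. To confirm that $T = \langle E, \widetilde{\Sigma}\rangle$, note that any $\tfrac{1}{n}(aE + b\widetilde{\Sigma})$ with $n>1$ would have $h$-component $\tfrac{a}{n} h$, so primitivity of $h$ forces $n \mid a$, and then the $\widetilde{\Sigma}$-component $\tfrac{a+b}{n}\widetilde{\Sigma}$ forces $n \mid b$.

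The wall-divisor assertion then reduces entirely to Proposition~\ref{prop:wall}. Pairing $E$ with $\widetilde{B}, \widetilde{\Sigma}, h$ gives $-3, 3, 2$, so $\div(E) = 1$; together with $E^2 = -4$ this is case~(3), and $E$ is realised (via Example~\ref{example:lP^5}) as the wall divisor of a small contraction onto a codimension-five $\PP^5$. For $3E + \widetilde{\Sigma} = 9\widetilde{B} + 4\widetilde{\Sigma} + 3h$ one computes $(3E+\widetilde{\Sigma})^2 = -24$ and checks that its pairings with $\widetilde{B}$, $\widetilde{\Sigma}$, and with $\oH^2(S,\ZZ)$ all lie in $3\ZZ$, giving $\div(3E+\widetilde{\Sigma}) = 3$; this is case~(4) of Proposition~\ref{prop:wall}, corresponding via Example~\ref{example:lP^3-bundle} to the small contraction of codimension three. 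The main obstacle is purely bookkeeping: one must carefully track the half-integral shift by $\widetilde{\Sigma}$ that separates the image of $\pi^*$ from the primitive class $E$ (since $\pi^*$ is an isometry only onto its image, not onto the whole of $\oH^2(\widetilde{M}_v,\ZZ)$), and verify that $\langle E, \widetilde{\Sigma}\rangle$ is truly saturated inside the full cohomology, not merely inside $\langle \pi^* D, \widetilde{\Sigma}\rangle$.
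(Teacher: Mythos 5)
Your proposal is correct and follows essentially the same route as the paper: define $E=\tfrac{1}{2}(\pi^*D-\widetilde{\Sigma})$ via the integrality statement in Theorem~\ref{thm:og10prelim}, compute $E^2=-4$, $(E,\widetilde{\Sigma})=3$, and conclude via Proposition~\ref{prop:wall} together with Examples~\ref{example:lP^5} and~\ref{example:lP^3-bundle}. The only difference is that you carry out in explicit coordinates the checks that the paper leaves implicit (that $\div(D)=2$ in $v^\perp$, and that $\langle E,\widetilde{\Sigma}\rangle$ is saturated), which is harmless and arguably a useful addition.
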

\begin{proof}
With an abuse of notation, we put $\pi^*D=D$.
By Theorem \ref{thm:og10prelim}, the class $D-\widetilde{\Sigma}$ is twice an integral class $E$. As $D$ and $\widetilde{\Sigma}$ are orthogonal, we have $E^2=-4$ (and therefore clearly $\div(E)=1$), which is our first claim. The second claim follows directly from Proposition \ref{prop:wall}, as $E^2=-4$ and $3E+\Sigma$ has square $-24$ and divisibility $3$. Notice that the curve giving the $\mathbb{P}^3$ bundle in the singular moduli space pulls back to the curve dual to $3E+\Sigma$, as in Example~\ref{example:lP^3-bundle}. However, the $-4$ curve dual to $E$ cannot be clearly seen in the singular moduli space, but from Example~\ref{example:lP^5} we know that it is linked to a lagrangian $\mathbb{P}^5$. 
\end{proof}
As both $E$ and $3E+\Sigma$ are wall divisors, \cite{monwrong} does not give a contradiction. 
The second ingredient in the proof of \cite{monwrong} was an induced automorphism on the resolution $\widetilde{M}_S(H)$ of $M_S(H)$, where $S$ has a non trivial finite order symplectic automorphism $\varphi$ and $H$ is $\varphi$ invariant. This is also an issue, because there is NO $v$-generic $\varphi$ invariant polarization and NO finite order symplectic automorphism on a K3 inducing a regular automorphism on a manifold of OG10-type, as is also happening for OG6-type manifolds (see \cite{gov}): 

\begin{oss}\label{oss:horrorsymmetriae}
Let $S$ be a K3 surface and let $\varphi\in \mathrm{Aut}(S)$ be a finite order non trivial symplectic automorphism. Let $v$ be a $\varphi$ invariant Mukai vector with $v=2w$ and $v^2=8$ and let $H$ be an invariant polarization. Then, the natural action of $\varphi$ on $M_v(S,H)$ does not extend to the minimal resolution $\widetilde{M}_v(S,H)$. Indeed, if it extended to an automorphism, $\widetilde{M}_v(S,H)$ would have an invariant ample class. However, the action of $\varphi$ on $H^2(\widetilde{M}_v(S,H))$ is given by its action on $v^\perp$ inside $H^{even}(S)$, so that the coinvariant part is determined in \cite{hashi} for any $\varphi$ (actually, any finite group). This coinvariant lattice is orthogonal to any invariant class and always contains elements of square $-4$ by \cite{hashi}, which are wall divisors by Proposition \ref{prop:wall}, hence there are no invariant ample classes.
\end{oss}
More precisely, the automorphisms are not well defined exactly along the $\mathbb{P}^5$ linked with a class of square $-4$, as in Example~\ref{example:lP^5} (see Remark~\ref{rmk:lP^5 deformato}).\\
This last issue can be circumvented easily, as it is possible to find different examples where a class of square $-10$ is orthogonal to an ample class.

\section{Examples and consequences for intermediate jacobians}\label{sec:IJac}


Let $V\subset\PP^5$ be a general cubic fourfold. The smooth linear sections $Y\subset V$ have a principally polarised intermediate Jacobian, giving rise to an intermediate Jacobian fibration $\cJ_V$. We denote by $\cJ^k_V$ the torsor parametrising degree $k$ cycles on the (smooth) linear sections of $V$. Notice that, up to canonical isomorphism, there exists only two intermediate Jacobians, namely $\cJ_V=\cJ^0_V$ and $\cJ^1_V=\cJ^2_V$. By \cite{LSV} and \cite{VoisinLSV}, there exist smooth and symplectic compactifications $\IJac(V)$ and $\IJac^t(V)$ of $\cJ_V$ and $\cJ^1_V$, respectively.  Both $\IJac(V)$ and $\IJac^t(V)$ are irreducible holomorphic symplectic manifolds of OG10 type. Moreover, they both come with the structure of a lagrangian fibration over $\PP^5$. In the following we denote by $p\colon\IJac(V)\to\PP^5$ and $p^t\colon\IJac^t(V)\to\PP^5$ these fibrations, and by $b_V:=p^*\mathcal{O}(1)\in\Pic(\IJac(V))$ and $b_V^t:=p^{t*}\mathcal{O}(1)\in\Pic(\IJac^t(V))$ the corresponding classes. 

Let $\mathcal{F}_V$ be the relative Fano variety of lines: it exists as a projective variety of dimension $7$, fibred over $\PP^5$. We notice that $\mathcal{F}$ is a $\PP^3$-bundle over $F(V)$, the Fano variety of lines of $V$.

There are two natural rational maps. The first one is
\begin{equation}\label{eqn:Theta}
\delta\colon\mathcal{F}_V\times_V\mathcal{F}_V\dashrightarrow\cJ_V,
\end{equation}
defined on the locus fibred over smooth linear sections by sending two lines to the Abel--Jacobi invariant of their difference (cf.\ \cite{ClemensGriffiths}). The closure in $\IJac(V)$ of ts image (with reduced scheme structure) is a relative theta divisor, which we denote by $T_V$. The second one is
\begin{equation}\label{eqn:twisted Theta}
s\colon\mathcal{F}_V\times_V\mathcal{F}_V\dashrightarrow\cJ^t_V,
\end{equation} 
defined on the locus fibred over smooth linear sections by sending two lines to the (twisted) Abel--Jacobi invariant of their sum. 
By \cite[Th\'eor\`eme~1.4]{Druel:ModuliSpaces}, this map is generically finite of degree $2$, hence the closure in $\IJac^t(V)$ of its image (with reduced scheme structure) defines a divisor $T^t_V$ in $\IJac^t(V)$, called relative twisted theta divisor.

Define the sublattices $P_V:=\langle T_V,b_V\rangle\subset\Pic(\IJac(V))$ and $P^t_V:=\langle T^t_V,b^t_V\rangle\subset\Pic(\IJac^t(V))$. 

\begin{lem}\label{prop:primitive_cohom}
Let $V$ be a general cubic fourfold. 
\begin{enumerate}
\item \begin{equation*}
P_V=\left(
\begin{array}{cc}
-2 & 1 \\
1 & 0
\end{array}\right)
\qquad\mbox{ and }\qquad
P^t_V=\left(
\begin{array}{cc}
-18 & 3 \\
3 & 0
\end{array}\right).
\end{equation*}

\item There exist isomorphisms of Hodge structures
\begin{equation*}
\alpha\colon\oH^4(V,\ZZ)_{\operatorname{prim}}\to P_V^\perp\qquad\mbox{ and }\qquad\alpha^t\colon\oH^4(V,\ZZ)_{\operatorname{prim}}\to (P_V^t)^\perp,
\end{equation*}
where the orthogonal complements are taken in $\oH^2(\IJac(V),\ZZ)$ and $\oH^2(\IJac^t(V),\ZZ)$, respectively. Moreover, $\alpha$ and $\alpha^t$ are anti-similitudes, that is they preserve the symmetric bilinear forms\footnote{$\oH^{2,2}(V,\ZZ)_{\operatorname{prim}}$ is endowed with the non-degenerate bilinear form induced by intersection product; while $P_V$ (resp.\ $P_V^t$) is endowed with the non-degenerate bilinear form induced by the Beauville--Bogomolov--Fujiki form on $\oH^2(\IJac(V),\ZZ)$ (resp.\ $\oH^2(\IJac^t(V),\ZZ)$).} up to a negative constant.

\item If $V$ is very general (in the sense of Hassett), then 
$$\Pic(\IJac(V))=P\qquad\mbox{ and }\qquad\Pic(\IJac^t(V))=P^t.$$
\end{enumerate}
\end{lem}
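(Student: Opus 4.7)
\medskip

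\textbf{Proof plan.} For part (1), the vanishings $b_V^2=0$ and $(b_V^t)^2=0$ are immediate since $p$ and $p^t$ are lagrangian fibrations and $b_V$, $b_V^t$ are pulled back from $\mathcal{O}_{\PP^5}(1)$. The mixed intersections $(T_V,b_V)$ and $(T_V^t,b_V^t)$ can be read off from the restriction of $T_V$ (resp.\ $T_V^t$) to a smooth fibre: for $\IJac(V)$, the map $\delta$ of~\eqref{eqn:Theta} is the classical Abel--Jacobi map whose image on each smooth fibre is a principal theta divisor $\Theta$, while for $\IJac^t(V)$ the map $s$ of~\eqref{eqn:twisted Theta} is generically $2$\nobreakdash-to-$1$ onto its image, and a computation \`a la Clemens--Griffiths identifies the image class on a smooth fibre with $3\Theta$. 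Using Matsushita's formula relating $q_X(D,b)$ to $\int_F D|_F\cdot(\Theta|_F)^{n-1}$ on a smooth fibre $F$, one then gets $(T_V,b_V)=1$ and $(T_V^t,b_V^t)=3$. The self-intersections $T_V^2=-2$ and $(T_V^t)^2=-18$ are then forced: given a lagrangian fibration with an integral section-like divisor of fibre-degree $d$, the only even value of the self-intersection compatible with Eichler's lemma (Lemma~\ref{lem:eichler}) embedding $P_V$ (resp.\ $P_V^t$) primitively into $L=U^3\oplus E_8(-1)^2\oplus A_2(-1)$ is $-2d^2$.

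For part (2), I would argue via the relative Abel--Jacobi construction. Over the open locus $U\subset\PP^5$ of smooth linear sections, the local system $R^1p_*\ZZ$ is identified with the vanishing cohomology local system $R^3\pi_*\ZZ_{\operatorname{prim}}$ of the universal smooth cubic threefold section $\pi\colon\mathcal{Y}_U\to U$; for a single smooth cubic threefold $Y\subset V$ the Abel--Jacobi map furnishes a Hodge isomorphism $\oH^3(Y,\ZZ)\cong\oH^1(J(Y),\ZZ)$. Leray for the smooth part of $p$ (and a routine extension argument to the compactification, controlled by the codimension of the discontinuity locus) then produces an injection $\oH^4(V,\ZZ)_{\operatorname{prim}}\hookrightarrow\oH^2(\IJac(V),\ZZ)$ whose image lies in $P_V^\perp$ on transcendental grounds (it is orthogonal to both fibre and theta classes by construction); a rank count using part (1) and the rank of $L$ shows the image is all of $P_V^\perp$. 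The anti-similitude statement is then a comparison of cup-products between $\oH^3$ of the cubic threefold and $\oH^1$ of the Jacobian, which differ by a negative constant due to the Hodge--Riemann orientation convention. The argument for $\alpha^t$ is identical, the only change being that the twisted Jacobian carries the same Hodge structure on $\oH^1$ as the Jacobian itself.

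For part (3), recall that Hassett's very general condition on $V$ means that $\oH^{2,2}(V,\CC)\cap\oH^4(V,\ZZ)$ is spanned by $h^2$, so $\oH^{2,2}(V,\ZZ)_{\operatorname{prim}}=0$. Via the Hodge isomorphism $\alpha$ of part (2), this transfers to $P_V^\perp\cap\oH^{1,1}(\IJac(V),\ZZ)=0$, so that $\Pic(\IJac(V))=P_V^{\operatorname{sat}}$. Since $P_V$ has discriminant $-1$ it is unimodular, hence automatically saturated, and we conclude $\Pic(\IJac(V))=P_V$. For the twisted case, the same reasoning gives $\Pic(\IJac^t(V))=(P_V^t)^{\operatorname{sat}}$; here $\det P_V^t=-9$, and saturation must be checked by hand. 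The only way $P_V^t$ could fail to be saturated is if a class of the form $\tfrac{1}{3}(aT_V^t+bb_V^t)$ were integral for some $a,b$ not both divisible by $3$, which combined with the discriminant form on $L$ (which has only $3$-torsion, stemming from the $A_2(-1)$ summand) would force $T_V^t/3\in L^\vee$; but this is ruled out because $T_V^t$ is primitive by construction, being the reduced image of a generically finite map of degree~$2$.

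The most delicate step will be part (2), specifically the extension of the Abel--Jacobi isomorphism from the smooth locus of $p$ (resp.\ $p^t$) to the compactification $\IJac(V)$ (resp.\ $\IJac^t(V)$) and the verification of the sign in the anti-similitude: both points rely on keeping careful track of the orientation conventions used by Laza--Sacc\`a--Voisin in~\cite{LSV} and Voisin in~\cite{VoisinLSV}. The computation of the degree~$3$ for $T_V^t|_F$ in part (1), if not already isolated in the literature, can alternatively be obtained as a consequence of part (2) by running the rank--discriminant argument in the reverse direction.
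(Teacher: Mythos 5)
Your overall strategy differs from the paper's --- the paper deduces the entire twisted case from the untwisted one (which it imports wholesale from \cite[Proposition~4.1]{ono}) by degenerating $V$ to a Pfaffian cubic $V_0$, where $\IJac(V_0)\cong\IJac^t(V_0)$, $T^t_{V_0}=3T_{V_0}$ and $b^t_{V_0}=b_{V_0}$ --- but the more serious issue is that two of your steps do not work as stated. First, in part (1) the self-intersections are \emph{not} forced by lattice theory: for $(T_V,b_V)=1$ the Gram matrix $\left(\begin{smallmatrix}2k&1\\1&0\end{smallmatrix}\right)$ is unimodular, hence isomorphic to $U$ and primitively embeddable in $L$ for \emph{every} even $2k$ (replacing $T_V$ by $T_V+mb_V$ changes $T_V^2$ by $2m$), and for $(T^t_V,b^t_V)=3$ only the residue of $(T^t_V)^2$ modulo $6$ is a lattice invariant. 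So Eichler's lemma cannot single out $-2$ or $-18$; these are genuinely geometric computations (in \cite{ono} the value $-2$ ultimately comes from degenerating to $\widetilde{M}_{(0,2H,-4)}$ of a genus-$2$ K3, and in the paper the twisted value comes from the relation $T^t_{V_0}=3T_{V_0}$ on the Pfaffian locus). Your fallback of ``running the rank--discriminant argument in reverse'' fails for the same reason: the discriminant of $P_V$ (resp.\ $P^t_V$) does not determine $T_V^2$ (resp.\ $(T^t_V)^2$).

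Second, in part (3) the twisted saturation argument is broken at both ends: an index-$3$ overlattice of $P^t_V$ need not contain $T^t_V/3$ (it could be generated over $P^t_V$ by $(aT^t_V+bb^t_V)/3$ with $a,b$ not both divisible by $3$), and primitivity of $T^t_V$ in $\oH^2(\IJac^t(V),\ZZ)$ is not ``by construction'' --- the cohomology class of a reduced irreducible divisor can perfectly well be divisible, and the Pfaffian relation $T^t_{V_0}=3T_{V_0}$ shows this point is genuinely delicate here. The paper argues differently: by \cite{PertusiCo}, $\IJac^t(V)$ is a desingularised Bridgeland moduli space whose exceptional divisor has divisibility $3$ in $L$, while any proper overlattice of $P^t_V\cong U(3)$ would be unimodular of rank $2$ and could contain no such class. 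Finally, in part (2) the ``routine extension to the compactification'' is precisely the non-routine point ($\oH^2$ of the compactification contains classes invisible over the smooth locus, and one must verify integrally that the image is exactly $P_V^\perp$, not merely a finite-index sublattice); the paper sidesteps this by working with the closure of the algebraic cycle of \cite[Lemma~1.1]{LSV} and the correspondence $T^*\circ q^*$, exactly as in \cite[Proposition~4.1]{ono}.
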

\begin{proof}
The claim about the non-twisted case is \cite[Proposition~4.1]{ono}. 
The twisted case follows directly from the non-twisted one: let us outline the main differences\footnote{The same construction already appeared in \cite[Section~4.5]{onoPhD}, but there it was an initial step to prove a wrong result, namely the existence of a twisted theta divisor of divisibility $1$. However, the argument works in this situation and we rephrase it here for better clarity.}.
Deforming $V$ to a Pfaffian cubic fourfold $V_0$, as explained in \cite[Example~4.3.6]{onoPhD}, the two varieties $\IJac(V_0)$ and $\IJac^t(V_0)$ are isomorphic. Moreover, in this case $T_{V_0}^t=3T_{V_0}$, while $b_{V_0}^t=b_{V_0}$; item (1) follows then from the non-twisted case.

Now, using the surjectivity of the twisted Abel--Jacobi map for cubic threefolds, and the universal property of the twisted intermediate jacobians (cf.\ \cite[Proposition~3.1]{VoisinLSV}), the proof of \cite[Lemma~1.1]{LSV} produces a rational cycle $T_0\in\operatorname{CH}^2(\mathcal{J}_V\times V)_{\QQ}$; we denote by $T\in\operatorname{CH}^2(\IJac^t(V)\times V)_{\QQ}$ its closure. The map $\alpha^t$ is defined as follows. First of all, let $\mathcal{Y}_V$ be the universal family of linear sections of $V$, and let $q\colon\mathcal{Y}_V\to V$ be the morphism that is the inclusion of each linear section (notice that $q$ is a projective bundle). The composition
$$ T^*\circ q^*\colon\oH^4(V,\QQ)\longrightarrow\oH^2(\IJac^t(V),\QQ)$$
is a morphism of Hodge structures. Moreover, if $h$ is the hyperplane section of $V$, then the class $h^2$ is sent to $P^t\otimes\QQ$. The map $\alpha^t$ is then the restriction of $T^*\circ q^*$ to the primitive cohomology of $V$. The proof now goes exactly as the proof of \cite[Proposition~4.1]{ono}.

Finally, if $V$ is very general, then $\Pic(\IJac^t(V))$ is an overlattice of $P^t$. Notice that $P_V^t\cong3U$, where $U$ is the unimodular hyperbolic plane. By \cite{PertusiCo}, $\IJac^t(V)$ can be realised as a symplectic desingularisation of a singular moduli space of Bridgeland-semistable objects on the K3 category of $V$, and the exceptional divisor of this desingularisation has divisibility $3$. It follows that $\Pic(\IJac^t(V))$ contains elements of divisibility $3$, hence it must coincide with $P_V^t$.
\end{proof}
The following is a straightforward generalisation of the result above. Let $V$ be a smooth cubic fourfold and $X$ any smooth and symplectic compactification of $\cJ_V$, such that $X$ is a lagrangian fibration. As before, we denote by $b_X$ the class of the fibration, that is the pullback of the hyperplane class on the base. Since $X$ contains an open subset isomorphic to a relative jacobian fibration, there exists a relative theta divisor $T_X$ obtained by closing in $X$ the image (with reduced scheme structure) of the morphism (\ref{eqn:Theta}). Put $P_X:=\langle T_X,b_X\rangle\subset\Pic(X)$.
\begin{lem}\label{lemma:prim cohom general}
Keep notations as above.
\begin{enumerate}
\item $P_X$ is isometric to a unimodular hyperbolic plane.

\item There exists an isomorphism of Hodge structures
$$\alpha\colon\oH^4(V,\ZZ)_{\operatorname{prim}}\to P_V^\perp$$
that is an anti-similitude.
\end{enumerate}
\end{lem}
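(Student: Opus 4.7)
The proof adapts the strategy of Lemma \ref{prop:primitive_cohom} to an arbitrary smooth symplectic lagrangian compactification $X$, leveraging that both the theta divisor and the Abel--Jacobi cycle are intrinsic to the open subset $\cJ_V \subset X$.

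For item (1), Matsushita's theorem gives $q_X(b_X) = 0$. To compute $q_X(T_X, b_X)$, observe that $b_X^5 = p^\ast \cO_{\PP^5}(1)^5$ is the class of a general smooth fiber $F$, which is a principally polarized abelian fivefold with $T_X|_F = \theta$; hence
$$\int_X b_X^5 \cdot T_X^5 = \int_F \theta^5 = 5! = 120.$$
Plugging $D = s\, b_X + t\, T_X$ into the Fujiki relation $\int_X D^{10} = c_{\mathrm{OG10}} \, q_X(D)^5$ and reading off the coefficient of $s^5 t^5$ yields $120 = 120 \cdot q_X(T_X, b_X)^5$, so $q_X(T_X, b_X) = 1$. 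The Gram matrix of $\{T_X, b_X\}$ has determinant $-1$, and the even-ness of $T_X^2$ allows the change of basis $T_X \mapsto T_X - \tfrac{T_X^2}{2} b_X$, exhibiting an isometry $P_X \cong U$.

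For item (2), following \cite[Lemma~1.1]{LSV}, the universal property of intermediate jacobians over the locus $U_1$ of at-worst-nodal sections produces a correspondence $T_0 \in \operatorname{CH}^2(\cJ_V \times V)_{\QQ}$, whose closure gives $T \in \operatorname{CH}^2(X \times V)_\QQ$. Together with the projection $q\colon \mathcal{Y}_V \to V$ from the universal family of linear sections, this defines a Hodge morphism $\alpha_X = T_\ast \circ q^\ast \colon \oH^4(V,\QQ) \to \oH^2(X,\QQ)$. Since $q^\ast(h^2)$ is algebraic and contributes only to $P_X \otimes \QQ$, the restriction to primitive cohomology takes values in $P_X^\perp \otimes \QQ$. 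The intersection computation of \cite[Proposition~4.1]{ono} adapts \emph{mutatis mutandis} to show that this restriction is an anti-similitude, as the relevant pairings can be computed over the generic fiber of the Abel--Jacobi fibration and depend only on the open structure $\cJ_V$, not on the compactification.

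The main obstacle is promoting this rational isomorphism to an integral one. The discriminant groups match: from item (1), $P_X \cong U$ is unimodular, so $P_X^\perp$ inherits the discriminant form of $\oH^2(X,\ZZ)$ (three-torsion of order $3$, contributed by $A_2(-1)$), which agrees with the discriminant of $\oH^4(V,\ZZ)_{\operatorname{prim}}$ (the orthogonal of $h^2$, of square $3$, inside the unimodular lattice $\oH^4(V,\ZZ)$). To verify that $\alpha$ itself realises an integral isomorphism — not merely that one exists abstractly — one argues by parallel transport through the family of smooth symplectic lagrangian compactifications of $\cJ_V$ as $V$ varies, reducing to the case $X = \IJac(V)$ treated in Lemma \ref{prop:primitive_cohom}(2).
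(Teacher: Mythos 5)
Your proposal is correct, but part (1) takes a genuinely different route from the paper. For item (1) the paper never touches the Fujiki relation: it first observes that any two smooth symplectic compactifications of $\cJ_{V_0}$ are birational, so $\oH^2(X,\ZZ)\cong\oH^2(\IJac(V_0),\ZZ)$ identifies $P_X$ with $P_{V_0}$ whenever the LSV compactification exists, and then handles a general $V$ by deforming to such a $V_0$ and using flatness of $\langle T,b\rangle$ under the Gauss--Manin connection; the Gram matrix is thus imported wholesale from Lemma~\ref{prop:primitive_cohom}, which rests on \cite[Proposition~4.1]{ono}. Your argument instead computes the lattice directly on $X$: Matsushita gives $q(b_X)=0$, and the identity $\int_X b_X^5 T_X^5=\int_F\theta^5=5!=120$ combined with the Fujiki constant $945$ of OG10 forces $q(T_X,b_X)=1$, after which evenness of $q(T_X)$ yields $P_X\cong U$ by the change of basis $T_X\mapsto T_X-\tfrac{q(T_X)}{2}b_X$. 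This is more self-contained (it needs no comparison with $\IJac(V_0)$ and works verbatim on any compactification containing $\cJ_V$ fibrewise, since the general fibre carries its principal theta polarization by Clemens--Griffiths), at the price of invoking the numerical value of the Fujiki constant; the paper's route is softer and reuses the already-established non-twisted computation. For item (2) you follow essentially the same path as the paper (the correspondence from \cite[Lemma~1.1]{LSV}, closure in $X$, and $T^*\circ q^*$ restricted to primitive cohomology), with the welcome extra care about integrality via discriminant groups and parallel transport to the case $X=\IJac(V)$ -- a point the paper leaves implicit in its appeal to Lemma~\ref{prop:primitive_cohom}. Only a cosmetic remark: you write $T_*\circ q^*$ where the paper uses $T^*\circ q^*$.
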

\begin{proof}
First of all, if $V_0$ is a smooth cubic fourfold such that $\IJac(V_0)$ exists, then by construction $X$ is birational to $\IJac(V_0)$, and the relative theta divisors are birational as well. In particular $\oH^2(X,\ZZ)\cong\oH^2(\IJac(V_0),\ZZ)$ and $P_X\cong P_{V_0}$, so that the result holds for $X$. 

For the general case, we can deform $V$ to $V_0$ such that the induced deformation from $X$ to $\IJac(V_0)$ preserves the relative theta divisors and the class of the fibrations. In particular, the lattices $P_{V_0}$ and $P_X$ are flat with respect to the Gauss--Manin connection of the corresponding deformation family, therefore $P_X$ is a unimodular hyperbolic plane.

The second claim follows exactly as in the proof of Lemma~\ref{prop:primitive_cohom}. In fact, the existence of a rational cycle $T_0\in\operatorname{CH}^2(\cJ_V\times V)_{\QQ}$ is assured by \cite[Lemma~1.1]{LSV} again, and then we close it in $X$ to get a cycle $T\in\operatorname{CH}^2(X\times V)_{\QQ}$. The morphism $\alpha$ is constructed in the same way, and the claim follows by using the fact that $P_X$ is a hyperbolic plane.
\end{proof}

\subsection{Wall divisors and geometry of $\IJac(V)$}
Let $V$ be a general cubic fourfold. 
The lagrangian fibration $\IJac(V)\to\PP^5$ has a rational section, so that there is a subvariety $Z_V\subset\IJac(V)$ birational to $\PP^5$. 

We claim that the wall divisor associated to $Z_V$ has degree $-4$ and divisibility $1$. In fact, without loss of generality we can suppose that $V$ is very general, so that $\Pic(\IJac(V))=\langle T_V,b_V\rangle$ by Proposition~\ref{prop:primitive_cohom}. Now, the divisor $T_V$ is prime exceptional by \cite[Theorem~2]{Sacca:boh}, and $b_V$ is nef. By Proposition~\ref{prop:wall}, the only wall divisor determining the K\"ahler cone must be $T_V-b_V$, which has degree $-4$, as claimed. In Figure~\ref{pic:B} below we display the K\"ahler and birational K\"ahler cone of $\IJac(V)$, when $V$ is very general. 

\begin{figure}[!ht]
\begin{center}
\begin{tikzpicture}
\tikzset{vertex/.style = {shape=circle,draw,minimum size=2em}}
\tikzset{>=latex}
\draw [thick, ->] (-0.5,0) -- (2.75,0) node[anchor=north] {$b_V$};
\draw [thick, ->] (0,-0.5) -- (0,1.75) node[anchor=north east] {$T_V+b_V$};
\draw [-] (0,0) -- (2,0.75) node[anchor=south west] {$D^\perp$};
\draw [dashed,-] (0,0) -- (1.5,1.5) node[anchor=south west] {$T_V^\perp$};
\end{tikzpicture} 
\end{center}
\caption{Picture of the K\"ahler cone of $\IJac(V)$; here $D=T_V-b_V$.}
\label{pic:B}
\end{figure}
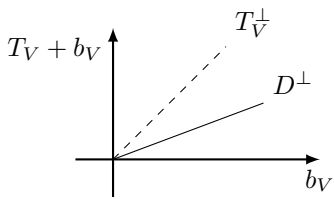​

\begin{oss}
Degenerating the cubic fourfold to a chordal cubic fourfold, the intermediate jacobian fibration degenerates to the smooth moduli space $\widetilde{M}_{(0,2H,-4)}(S)$, where $S$ is the K3 surface of genus $2$ associated to a chordal cubic fourfold (see \cite[Section~5]{KLSV} or \cite[Section~4.1.1]{ono}). The zero section $Z_V$ degenerates to the zero section $Z_S$ of Example~\ref{example:zero section}. In particular, the wall divisor associated to $Z_V$ is sent to the wall divisor associated to $Z_S$. More precisely, as remarked in \cite[Section~4.1.4]{ono}, there is a parallel transport operator 
$$ P\colon\oH^2(\IJac(V),\ZZ)\longrightarrow\oH^2(\widetilde{M}_{(0,2H,-4)}(S),\ZZ)$$
such that $P(T_V-b_V)=a-3b$ (in the notations of Example~\ref{example:zero section}).
\end{oss}

\subsection{Wall divisors and geometry of $\IJac^t(V)$}
Let $V$ be a general cubic fourfold.
There is a natural rational morphism
\begin{equation}\label{eqn:Zt}
\varphi^t\colon\mathcal{F}_V\dashrightarrow \IJac^t(V),
\end{equation}
defined by sending a line $l$ on a smooth linear section $Y$ to the Abel--Jacobi invariant of $[l]$ in $J^1_Y$. It is birational onto its image. We denote by $W^t_V$ the closure of the image of $\varphi^t$ and remark that it is generically a $\PP^3$-bundle over the Fano fourfold $F(V)$ of lines in $V$. 

We claim that the wall divisor associated to the Mukai flop around $W_V^t$ has degree $-24$ and divisibility $3$. In fact, we can suppose again that $V$ is very general, so that $\Pic(\IJac^t(V))=\langle T_V^t,b_V^t\rangle$. By Proposition~\ref{prop:pex}, the divisor $T_V^t+2b_V^t$ is prime exceptional, and by Proposition~\ref{prop:wall} the only wall divisor determining the K\"ahler cone must be $T_V^t-b_V^t$, which has degree $-24$ and divisibility $3$ as claimed. The same Figure~\ref{pic:B} above describes the K\"ahler and birational K\"ahler cone of $\IJac^t(V)$, when $V$ is very general, up to replacing $T_V$ and $b_V$ with the corresponding twisted versions $T_V^t$ and $b_V^t$, and the isotropic class $T_V+b_V$ with the corresponding isotropic class $T_V^t+3b_V^t$.


\begin{oss}
The Mukai flop around $Z_V^t$ is explicitly described in \cite[Example~6.8]{PertusiCo}, using (desingularised) moduli space of Bridgeland semistable objects on the K3-category of the cubic fourfold.
\end{oss}

\begin{oss}
There is a natural rational map
$$ t\colon\IJac^t(V)\longrightarrow\IJac(V) $$
of degree $3^{10}$. 
Denote by $W_V$ the closure of the image of $W_V^t$ under the map $t$. Then, if $V$ is very general, $W_V$ cannot be a $\PP^3$-bundle, since otherwise we should see a wall divisor of divisibility $3$ in $\Pic(\IJac(V))$.
\end{oss}


\subsection{Uniqueness of the symplectic intermediate jacobian compactification}

In this subsection, we give an answer to the following question.
\begin{ques*}
Let $V$ be a smooth cubic fourfold and let $\mathcal{J}_{U_1}(V)$ be the intermediate jacobian fibration over the open set $U_1$ of (at most) nodal cubic threefolds. When is there a unique IHS compactification $\overline{\mathcal{J}}(V)$ of $\mathcal{J}_{U_1}(V)$? 
\end{ques*}
More precisely, we want this compactification to preserve the fibred structure; that is we are asking when the construction of \cite{LSV} can be directly applied, so clarifying the meaning of general in their result.
\begin{thm}\label{thm:una sola LSV}
Let $V$ be a smooth cubic fourfold outside of Hassett's divisors $\mathcal{C}_8$ and $\mathcal{C}_{12}$. Then there is a unique IHS compactification of $\mathcal{J}_{U_1}(V)$. 
\end{thm}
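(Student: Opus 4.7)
The plan is to combine the classification of wall divisors (Proposition~\ref{prop:wall}) with the lattice description of $\Pic(\IJac(V))$ from Lemma~\ref{prop:primitive_cohom} in order to characterize when $b_V:=p^*\cO(1)$ lies in a unique K\"ahler chamber, and then to translate the resulting numerical condition on algebraic classes of $V$ into Hassett's discriminant framework.

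I would fix two IHS compactifications $X_1,X_2$ of $\mathcal{J}_{U_1}(V)$, each equipped with a lagrangian fibration $p_i\colon X_i\to\PP^5$ extending the natural one, and set $b_i:=p_i^*\cO(1)$. Since $\PP^5\setminus U_1$ parametrises linear sections with more than one node or worse singularities, it has codimension at least two in $\PP^5$; equidimensionality of lagrangian fibrations (Matsushita) then implies that $X_i\setminus\mathcal{J}_{U_1}(V)$ has codimension at least two in $X_i$, so $X_1$ and $X_2$ are isomorphic in codimension one, $\Pic(X_1)\cong\Pic(X_2)$, and $b_1,b_2$ correspond to a single class $b_V$. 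Such smooth models differ by a finite sequence of Mukai flops along wall divisors of small-contraction type; by Proposition~\ref{prop:wall} together with Example~\ref{example:lP^5} and Example~\ref{example:lP^3-bundle} these are precisely the wall divisors of square $-4$ and divisibility $1$ or of square $-24$ and divisibility $3$. It therefore suffices to show that no such wall divisor is orthogonal to $b_V$, in which case $b_V$ lies in the closure of a unique K\"ahler chamber and so $X_1\cong X_2$.

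Using Lemma~\ref{prop:primitive_cohom}, I would write $\Pic(X)=P_V\oplus(P_V^\perp\cap\Pic(X))$ with $P_V=\langle T_V,b_V\rangle\cong U$ unimodular, and identify $P_V^\perp\cap\Pic(X)$ with $\oH^{2,2}(V,\ZZ)_{\operatorname{prim}}^{\operatorname{alg}}$ via the anti-similitude $\alpha$. Any $D\in\Pic(X)$ with $(D,b_V)=0$ has the form $D=mb_V+\alpha(\gamma)$, whence $D^2=-\gamma^2$ and $\div(D)=\gcd(m,\div_{\oH^4_{\operatorname{prim}}}(\gamma))$. Using the congruence $v^2\equiv 6\pmod{18}$ satisfied by primitive divisibility-three vectors in $\oH^4(V,\ZZ)_{\operatorname{prim}}\cong U^2\oplus E_8^2\oplus A_2$, the two small-contraction wall types force $\gamma$ to be primitive with invariants $(\gamma^2,\div(\gamma))=(4,1)$ or $(\gamma^2,\div(\gamma))=(24,3)$, respectively. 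In the first case the sublattice $\langle h^2,\gamma\rangle\subset\oH^4(V,\ZZ)$ is already saturated, of discriminant $3\cdot 4=12$, so $V\in\mathcal{C}_{12}$. In the second case the divisibility-three hypothesis together with unimodularity of $\oH^4(V,\ZZ)$ forces the class $T:=(h^2+\gamma)/3$ to be an integral algebraic class (one checks $T^2=(3+24)/9=3$ and integrality of all pairings with $\oH^4(V,\ZZ)$), enlarging $\langle h^2,\gamma\rangle$ to the saturated lattice $\langle h^2,T\rangle$ of discriminant $72/9=8$, so $V\in\mathcal{C}_8$.

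The main technical obstacle lies in this last lattice bookkeeping, particularly the index-three extension that converts a $(-24,3)$ wall into the plane class of $\mathcal{C}_8$ rather than a sublattice of discriminant $72$. One also needs to verify that divisorial-contraction wall divisors (squares $-2$ and $-6$, the latter with divisibility~$3$) cause no additional obstruction: by Proposition~\ref{prop:pex} they do not give rise to new smooth birational models, and the corresponding invariants of $\gamma$ would force Hassett discriminants $6$ and $2$, both excluded by Hassett's realisability criterion $d>6$, $d\equiv 0,2\pmod 6$. Putting everything together, the hypothesis $V\notin\mathcal{C}_8\cup\mathcal{C}_{12}$ rules out every wall divisor orthogonal to $b_V$, so $b_V$ lies in the closure of a single K\"ahler chamber and $X_1\cong X_2$, giving the desired uniqueness.
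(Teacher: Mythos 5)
Your argument follows essentially the same route as the paper's: reduce uniqueness to the non-existence of a wall divisor orthogonal to the fibration class $b_V$, invoke the classification of wall divisors (Proposition~\ref{prop:wall}) together with the anti-similitude $P_V^\perp\cong\oH^4(V,\ZZ)_{\operatorname{prim}}$, and translate the resulting numerical invariants into Hassett labellings. The paper streamlines the first step by noting that the separating wall must be orthogonal to both $b$ \emph{and} the relative theta divisor $T$ (because $T+cb$ is big and nef on both models for all $c\gg0$), which places the wall divisor directly inside $P_V^\perp$ and avoids the $mb_V$ bookkeeping; it then handles all four wall types at once and discards the two divisorial ones as corresponding to the singular loci $\mathcal{C}_2$ and $\mathcal{C}_6$. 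One inaccuracy in your case analysis: in the $(-24,3)$ case the class $\gamma$ need not be primitive. Taking $\gamma=2\gamma'$ with $\gamma'^2=6$, $\div(\gamma')=3$ (allowed by the congruence $v^2\equiv 6\pmod{18}$) and $m$ odd with $3\mid m$ gives a primitive $D=mb_V+\alpha(\gamma)$ of square $-24$ and divisibility $3$; this subcase saturates $\langle h^2,\gamma'\rangle$ to a labelling of discriminant $2$, which is again empty for smooth cubics, so the omission is harmless but should be recorded. Your explicit computation of the index-three overlattice turning the $(-24,3)$ wall into the discriminant-$8$ labelling is a useful detail that the paper leaves to the citation of Hassett.
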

\begin{proof}
First, let us recall that by \cite[Theorem 1.6]{Sacca:boh} there is at least one hyper-K\"ahler compactification compatible with the fibration map to $\mathbb{P}^5$.
Suppose that we have two different compactifications $\pi_i\colon\overline{\mathcal{J}}_i(V)\rightarrow \mathbb{P}^5$, for $i=1,2$.
These two manifolds are birational by construction, hence we have a natural identification 
\begin{equation}\label{eqn:two compactifications}
\oH^2(\overline{\mathcal{J}}_1(V),\ZZ)\cong \oH^2(\overline{\mathcal{J}}_2(V),\ZZ).
\end{equation}
 Notice that the indeterminacy locus of the birational map is fibred over the complement $\mathbb{P}^5\setminus U_1$. 
Let $b_i=\pi_i^*(\mathcal{O}_{\PP^5}(1))$. The variety $\mathcal{J}_{U_1}(V)$ has a distinguished relative theta divisor $T_{U_1}$ (see (\ref{eqn:Theta})), and we denote by $T_1$ and $T_2$ the closure of $T_{U_1}$ in the two compactifications, respectively. Notice that, under the identification (\ref{eqn:two compactifications}), $b_1=b_2=:b$ and $T_1=T_2$.

By construction, the divisor $b$ is nef on both manifolds. As $T_i$ is relatively ample over $U_1$, we have that $T_i+cb$ is big and nef for $c\gg0$. By hypothesis, it fails to be ample and it does not intersect some curve in the indeterminacy locus of the birational map between the two manifolds. Therefore, there is a wall divisor orthogonal to these movable classes, which is thus orthogonal to both $b$ and $T$. By Theorem~\ref{thm:ample}, this happens only when $\langle T,b\rangle^\perp$ contains a wall divisor, that is a divisor satisfying one of the following:
 \begin{enumerate}
 \item $D^2=-2$ and $\div(D)=1$,
\item $D^2=-6$ and $\div(D)=3$,
\item $D^2=-24$ and $\div(D)=3$,
\item $D^2=-4$ and $\div(D)=1$.
 \end{enumerate}
Using the isomorphism $\langle T,b\rangle^\perp\cong \oH^4(V,\ZZ)_{\operatorname{prim}}$ in Lemma~\ref{lemma:prim cohom general}, this happens only when the cubic is special in the sense of Hassett \cite{hass_cubic}, and each of these cases corresponds to one of Hassett's divisors.
By \cite{hass_cubic}, these four cases above correspond, respectively, to the four following situations:
\begin{enumerate}
\item chordal cubics (divisor $\mathcal{C}_2$);
\item nodal cubics (divisor $\mathcal{C}_6$);
\item cubics containing a plane (divisor $\mathcal{C}_8$);
\item cubics containing a cubic scroll (divisor $\mathcal{C}_{12}$).
\end{enumerate}
As the first two cases are singular cubics, our claim follows.  
\end{proof}

\subsection{Proposition \ref{wall:esclusione_a_raffica} revisited}
We conclude the paper with the following corollary of Theorem~\ref{thm:una sola LSV}.
The key argument of Proposition \ref{wall:esclusione_a_raffica} was a lattice-theoretic argument which allowed us to conclude that divisors of square $-42$ and divisibility $3$ are not wall divisors. Here we give a geometric proof of this claim.
In \cite[Section 3.2]{LSV} (see also \cite{Sacca:boh}), the authors prove that the intermediate jacobian construction works well for general Pfaffian cubic fourfolds and use this geometry to prove that their construction gives irreducible holomorphic symplectic manifolds deformation equivalent to O'Grady's tenfolds. This geometric construction is precisely what we will use.
\begin{prop}
Let $X$ be a manifold of OG10 type and let $D\subset Pic(X)$ be a divisor such that $D^2=-42$ and $div(D)=3$. Then $D$ is not a wall divisor.
\end{prop}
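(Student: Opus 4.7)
The plan is to exhibit one specific projective OG10 manifold whose Picard lattice contains a primitive class of square $-42$ and divisibility $3$, and to produce directly a Kähler class on it orthogonal to this class. By Eichler's Lemma~\ref{lem:eichler}, primitive classes of square $-42$ and divisibility $3$ form a single orbit under $\Or^+(\oH^2(X,\ZZ))$, and being a wall divisor is a monodromy invariant, so a single example suffices.

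Take $V$ to be a general Pfaffian cubic fourfold, so that $V\in \mathcal{C}_{14}$: the sublattice $\langle h^2, P \rangle \subset \oH^{2,2}(V,\ZZ)$ generated by $h^2$ and the class $P$ of a quartic scroll has $h^2\cdot h^2 = 3$, $h^2\cdot P = 4$, $P\cdot P = 10$. The primitive Hodge class $\gamma = 3P - 4h^2 \in \oH^{2,2}(V,\ZZ)_{\operatorname{prim}}$ satisfies $\gamma^2 = 42$, and because the discriminant form of $\oH^4(V,\ZZ)_{\operatorname{prim}}$ takes values in $\ZZ/3$, one checks by inspecting $\gamma/3$ modulo the lattice that $\div(\gamma) = 3$. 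By \cite{LSV}, $\IJac(V)$ is an OG10 manifold with a Lagrangian fibration; via the anti-isometry $\alpha\colon \oH^4(V,\ZZ)_{\operatorname{prim}} \to P_V^\perp$ of Lemma~\ref{prop:primitive_cohom}, the class $\gamma$ corresponds to $L\in P_V^\perp \subset \Pic(\IJac(V))$ with $L^2 = -42$. Since $P_V\cong U$ is unimodular, $\oH^2(\IJac(V),\ZZ)$ splits as $P_V \oplus P_V^\perp$, so $\div(L) = 3$ in the full lattice.

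Now I produce a Kähler class orthogonal to $L$. Because $L \perp P_V$, every class of $P_V$ is orthogonal to $L$, so it suffices to find a Kähler class on $\IJac(V)$ lying in $P_V$. Parameterise $P_V \otimes \RR$ by $\alpha = xT_V + yb_V$, with $T_V^2 = -2$, $T_V\cdot b_V = 1$, $b_V^2 = 0$; the positive cone adjacent to the nef class $b_V$ is $\{x>0,\,y>x\}$. By Theorem~\ref{thm:movable} and Proposition~\ref{prop:pex}, the birational Kähler cone is a chamber whose walls are orthogonals of primitive classes of square $-2$ or $(-6,\div 3)$. Writing a general such class as $D = aT_V + bb_V + cL$ and imposing the square and divisibility conditions yields the Diophantine equations $a(a-b) = 1 - 21c^2$ (square $-2$) and $a(a-b) = 3 - 21c^2$ with an additional divisibility constraint (square $-6$). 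A case-by-case analysis shows that for $c\neq 0$ no solution produces a wall meeting the region $\{y>2x>0\}$, while for $c=0$ the only prime exceptional in $P_V$ is $T_V$, whose orthogonal is the line $y = 2x$. Hence $\{y>2x>0\}$ is contained in $\mathcal{BK}_{\IJac(V)}$; the class $A = T_V + 4b_V$ lies in this region, avoids the square $-4$ walls $(T_V - b_V)^\perp$ and $(2T_V + b_V)^\perp$ coming from the zero section and the $\PP^5$ flop of Sections~\ref{example:zero section} and \ref{example:lP^5}, and is therefore ample on $\IJac(V)$. As $A \perp L$, this shows $L^\perp \cap \mathcal{BK}_{\IJac(V)} \neq \emptyset$, so $L$ is not a wall divisor.

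The main technical obstacle is the enumeration in the third paragraph: verifying that the large negative contribution $-42c^2$ of $L^2$ forces every solution of the Diophantine conditions with $c\neq 0$ to produce a wall whose intersection with $P_V$ lies outside the region $\{y>2x>0\}$. This is elementary but needs to be checked case by case.
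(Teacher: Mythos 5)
Your first two paragraphs follow the paper's proof closely: same choice of a general Pfaffian cubic, same primitive Hodge class $3d-4h^2$ of self-intersection $42$, same transfer to $L\in P_V^\perp$ via Lemma~\ref{prop:primitive_cohom}, and the same observation that it now suffices to exhibit a K\"ahler class inside $P_V=L^\perp\cap\Pic(\IJac(V))$. The divergence, and the problem, is in your third paragraph, where you try to certify ampleness of $A=T_V+4b_V$ by a wall-and-chamber count.

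The gap is twofold. First, the count is incomplete: to conclude that a class in the fundamental exceptional chamber is K\"ahler (even only on \emph{some} birational model, which is all that is needed), you must verify that it avoids \emph{every} wall-divisor wall, and your enumeration omits the classes of square $-4$ and the classes of square $-24$ and divisibility $3$ with nonzero $L$-component; you only dispose of the two square $-4$ walls lying inside $P_V$. (This part is repairable: for $D=aT_V+bb_V+cL$ with $c\neq 0$ one gets $a(a-b)=k-21c^2<0$ for $k\in\{1,2,3,12\}$, forcing $a$ and $b$ to have the same sign, hence $(D,A)=2a+b\neq 0$.) Second, and more seriously, the step ``avoids these walls, therefore ample'' is circular: any such deduction rests on the complete classification of wall divisors, i.e.\ on already knowing that classes of square $-42$ and divisibility $3$ are \emph{not} walls — which is the statement under proof. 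Indeed $A\perp L$ by construction, so if $L$ were a wall divisor then by definition $L^\perp\cap\mathcal{BK}_{\IJac(V)}=\emptyset$ and $A$ could not lie in the birational K\"ahler cone, no matter how many other walls it avoids. The paper breaks exactly this circle with a geometric input you do not use: since a general Pfaffian cubic lies outside $\mathcal{C}_8$ and $\mathcal{C}_{12}$, Theorem~\ref{thm:una sola LSV} gives uniqueness of the holomorphic symplectic compactification, which forces the big and nef class $T_V+cb_V$ ($c\gg 0$) to be ample; that class lies in $P_V$ and is therefore orthogonal to $L$. You should replace your chamber analysis by this (or an equivalent) external certification of ampleness.
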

\begin{proof}
As divisors with these discrete properties form a single monodromy orbit, it is enough to prove our claim on a well chosen $X$. Let $V$ be a general Pfaffian cubic fourfold, and let $\IJac(V)\rightarrow \mathbb{P}^5$ be the compactification of its intermediate jacobian fibration. By \cite[Section~3.2, Theorem~4.20, Theorem~5.7]{LSV}, it is smooth. Therefore, the primitive cohomology of $V$ embeds into $\oH^2(\IJac(V),\ZZ)$ by Lemma~\ref{prop:primitive_cohom}. By work of Hassett \cite{hass_cubic}, $\oH^{2,2}(V,\ZZ)=\langle h^2,d \rangle$ where $(h^2)^2=3$, $d^2=10$ and $(h,d)=4$. It follows that $\oH^{2,2}(V,\ZZ)_{\operatorname{prim}}=\langle 3d-4h \rangle$. Put $D=3d-4h$ and, by abuse of notation, let us denote with $D$ also its image in $\Pic(\IJac(V))$ under the isomorphism in Lemma~\ref{prop:primitive_cohom}. Clearly, $D^2=-42$ and $\operatorname{div}(D)=3$. However, $D^\perp\subset \Pic(\IJac(V))$ is isometric to the hyperbolic plane $P_V$. As the isotropic class $b_V$ is nef and the relative theta divisor $T_V$ is relatively ample, it follows that $T_V+cb_V$ is big and nef for $c\gg0$. Moreover, since there exists a unique compactification for Pfaffian fourfolds (see Theorem~\ref{thm:una sola LSV}), this class must be ample. However, $D$ is orthogonal to this ample class, hence it cannot be a wall divisor.   
\end{proof}


\end{document}